\documentclass[a4paper]{article}

\usepackage[english]{babel}
\usepackage[utf8x]{inputenc}
\usepackage[T1]{fontenc}

\usepackage[a4paper,top=3cm,bottom=2cm,left=3cm,right=3cm,marginparwidth=1.75cm]{geometry}

\usepackage{amsmath}
\usepackage{graphicx}
\usepackage[colorinlistoftodos]{todonotes}
\usepackage[colorlinks=true, allcolors=blue]{hyperref}
\usepackage{latexsym}
\usepackage{amsmath,amsthm}
\usepackage{amsfonts}
\usepackage{amsgen}
\usepackage{amssymb}
\usepackage{amstext}
\usepackage{amssymb}
\usepackage{dsfont}

\newtheorem{theorem}{Theorem}[section]
\newtheorem{lemma}[theorem]{Lemma}
\newtheorem{definition}{Definition}[section]
\newtheorem{remark}[theorem]{Remark}
\newtheorem{example}[theorem]{Example}
\newtheorem{corollary}[theorem]{Corollary}

\DeclareMathOperator{\dist}{dist}
\DeclareMathOperator{\diam}{diam}
\DeclareMathOperator{\id}{id}

\title{Unifying topological entropy notions for piecewise continuous maps on the interval}
\author{A.E. Calderón\footnote{Escuela de Ingeniería, Facultad de Ingeniería y Empresa -- Universidad Católica Silva Henríquez, Santiago, Chile. {\tt acalderonc@ucsh.cl}} \ and E. Villar-Sepúlveda\footnote{Department of Engineering Mathematics -- University of Bristol, Bristol, England. {\tt edgardo.villar-sepulveda@bristol.ac.uk}}}

\begin{document}
\maketitle

\begin{abstract}
\noindent We generalize the definition of topological entropy given by Adler, Konheim, and McAndrew (AKM) for piecewise continuous self-maps defined on a compact interval (pc-maps). For this notion of entropy, we prove that the properties of the AKM-entropy in the compact-continuous setting get naturally extended, including that it can be estimated using Bowen's formula disregarding the metric used to compute it. Additionally, for piecewise strictly monotonic pc-maps, we prove that the Misiurewicz-Szlenk formula based on the asymptotic behavior of the number of monotony pieces for iterations of the map coincides with our notion of topological entropy. In this way, we unify the notions of entropy commonly used for piecewise continuous interval maps, proving that they all coincide under compactness assumptions.\\

\noindent {\bf MSC 2022:} 37B40, 37E05 (primary).\\
\noindent {\bf Keywords:} interval map, piecewise continuous, topological entropy.
\end{abstract}

\section{Introduction}\label{Introduction}
    Rufus Bowen was a pioneer in deepening and giving importance to the concept of topological entropy, which was introduced in \cite{AKM65} by Adler, Konheim, and McAndrew (AKM). This concept was originally defined using open covers to study the `complexity' of continuous self-maps on a compact topological space. A reformulation of the AKM-entropy based on the dispersion of orbits was given by Bowen and Dinaburg (BD) \cite{Bo70,D70}. However, that definition requires the topological space to be metrizable. Fortunately, both definitions yield the same number when the metric space is compact. Compactness guarantees that the BD-entropy does not depend on the metric used to calculate it.\\[-2ex]
    
    On the other hand, when the metric space is not compact, the BD-entropy gets metric-dependent. It is well-known that if it is computed using two uniformly equivalent metrics, the results are the same (for more details, see \cite[pages 168-176]{Wa00}). Nonetheless, these can change if the metrics used are equivalent, but not uniformly equivalent (an example of this can be found in \cite[page 171]{Wa00}). Another problem with this definition in the non-compact case is that, in general, the BD-entropy does not seem to measure the complexity of the map's dynamics. For example, consider the uniformly continuous map $T: \mathbb R \to \mathbb R$ given by $T(x) = 2 x$, where $\mathbb R$ is endowed with the Euclidean metric. The dynamics of $T$ is really simple: $0$ is the unique fixed point and, for every $x \neq 0$, $|T^n(x)| \to \infty$ as $n \to \infty$. Nevertheless, for this map, $h_\text{BD}(T) \geq \log 2$ (see \cite{Bo71}), which breaks the premise: ``positive entropy implies that the map has a complicated dynamic behavior''. Unfortunately, this problem is not solved by defining the entropy of $T$ only considering finite open covers of the non-compact set (see \cite[page 241]{Ho75}).\\[-2ex]
    
    To address this issue, other definitions of entropy have been considered to obtain good properties when the space is not compact or the dynamical system is not continuous. In particular, two notions of topological entropy --based on BD-entropy-- are provided in recent papers \cite{CR05,RBK18}. These entropies are referred to as {\it continuity entropies} because they are obtained by restricting the system to different compact subsets where the map is continuous and invariant. Then, one has to take the supremum over all these restrictions (see \cite{RBK18}). These notions are independent of the metric and seem to solve the non-compactness problem. Such definitions preserve some good properties of the BD-entropy and serve as a good measure of the complexity of the system's dynamic behavior in the non-compact context; nevertheless, the property associated with topological semiconjugation is generally not valid. Note that if the system is piecewise continuous, it can always be restricted to the subspace where the system and its iterates are continuous, which is usually non-empty and non-compact.
    Nonetheless, there are cases where complicated dynamics might not be taken into account by continuity entropies in this context. For instance, as shown in \cite{CCG20}, the a\-symp\-to\-tic dynamics of piecewise contracting interval maps may be supported on minimal Cantor sets, which always intersect some jump-discontinuities of the system. The good pro\-per\-ties of the continuity entropies strongly depend on the continuity of the map restricted to compact and invariant subsets, but every piecewise contracting interval map res\-tric\-ted to an attracting Cantor set --if it exists-- is discontinuous, not to mention that these Cantor sets are not exactly invariant (since these appear regardless of how the map is defined over discontinuities). These sets satisfy what is known as {\em pseudo-invariance}, which was introduced in \cite{CCG20} and allows working with a concept of invariance that does not depend on how the map is defined over its finite set of discontinuity points (a rigorous definition of pseudo-invariance is provided in Section \ref{sec2}). In short, the problem with the concepts of continuity entropies is that there are important properties that are not valid in the discontinuous setting, and --sometimes-- they fail to capture some complicated dynamics.\\[-2ex]
    
    Other formulations of entropy have been used in a one-dimensional setting. In \cite{H79,Y81}, symbolic dynamics is used to define the entropy of piecewise strictly monotonic pc-maps (i.e. the maps that are continuous on a compact interval and strictly monotonic over a \underline{finite} number of intervals). This ``symbolic entropy'' coincides with the AKM-entropy when the system is continuous (see \cite{Y81}) or when the set of pre-images of turning points is dense on the whole interval (see \cite{H79}). For the same class of systems (i.e. piecewise strictly monotonic pc-maps), we have the following formula due to Misiurewicz and Szlenk (see \cite{MS80}): If $T$ is a continuous piecewise strictly monotonic interval map, then
    \begin{equation}
        h_\text{AKM}(T) = \lim_{n \to \infty} \frac{1}{n} \log c_n, \label{CN}
    \end{equation}
    where $c_n \geq 1$ is the smallest number of intervals over which $T^n$ is monotonous. However, if $T$ is not continuous, it is natural to consider the value \eqref{CN} as the definition of topological entropy (see \cite{AM14,BKP02,M03,GN22}), which is also referred to as the {\em singular entropy} in the discontinuous context, differentiating it from the topological entropy defined using Bowen's formula (see, for example, \cite{R17}), which has also been used to provide an idea of ``dynamic complexity'' associated with piecewise continuous dynamical systems (see \cite{CGMU16,CV23}). Considering this, the singular entropy of a piecewise strictly monotonic pc-map can be computed by counting permutations exhibited by periodic orbits (see \cite{M03}) or by initial segments of orbits (see \cite{BKP02}). However, in \cite{M03} it is shown that these approximations via permutations can fail if $T$ is not piecewise monotonic. Also, note that defining the entropy of $T$ via symbolic dynamics or formula \eqref{CN} makes it difficult to obtain good properties for such a concept. Last, in \cite{C04}, the concept of topological entropy for arbitrary self-maps defined on a compact interval is defined using Bowen's formula. Whilst it is possible to obtain good properties from this notion of entropy, the dependence on the metric is explicit.\\[-2ex]
    
    Our goal in this article is to construct a metric-independent concept of topological entropy that does not depend on how the map is defined over its discontinuity points whilst keeping all of the good properties that the AKM-entropy has in the compact-continuous setting. Particularly, we want this concept of entropy to capture all complicated dynamics, unlike what happens with the continuity entropies defined in \cite{CR05,RBK18}. We will achieve this by generalizing the classical topological construction of the AKM-entropy in \cite{AKM65}, ensuring that our definition is independent of the metric that generates the topology of the space. \\[-1ex]

    \noindent {\bf Outline of the article:} In Section \ref{sec2}, we establish the main results that are proven in this article. Sections \ref{sec3} and \ref{sec4} are dedicated to the construction of the concept of topological entropy for pc-maps with a finite number of critical points. Specifically, on the one hand, in Section \ref{sec3} we introduce and prove some basic results on preliminary concepts such as collections, open covers, and minimal cardinality, all adapted to our context. On the other hand, in Section \ref{sec4} we define our notion of topological entropy for piecewise continuous interval maps and prove that it is well-defined. In Section \ref{sec6}, we prove some classic properties that this notion of entropy satisfies (see Theorem \ref{ENTROPY}). In Section \ref{sec5}, we prove that it is possible to use Bowen's formula to compute the topological entropy, concluding that this concept does not depend on the metric used as long as the space is compact (see Theorem \ref{BOWEN.MAIN}). In Section \ref{sec7}, we prove that the topological entropy can also be estimated using Misiurewicz-Szlenk formula (see Theorem \ref{main2}). As a corollary of this, we prove that every injective pc-map has zero topological entropy. Furthermore, we present a result on how discontinuities and/or critical points influence the value of the topological entropy for systems that map each continuity piece onto the entire interval (see Theorem \ref{main3}). Finally, Section \ref{sec8} is devoted to showing some examples and applications of the theory.

\section{Preliminary definitions and main results}\label{sec2}
    Let $X \subset \mathbb R$ be a compact interval with a non-empty interior. A map $f: X \to X$ is said to be a {\em piecewise continuous interval map} (in short, {\em pc-map}) if there exists a finite collection of $N \geq 1$ open subsets $X_1, X_2, \ldots, X_N\subset X$, called {\em continuity pieces} of $f$, such that 
    \begin{enumerate}
        \item $X_{i_1} \cap X_{i_2} = \emptyset \,$ for all $i_1, i_2 \in \{1, \ldots, N\}$ such that $i_1\neq i_2\,$;
        \item $X = \overline{X_1}\cup \overline{X_2} \cup \ldots \cup \overline{X_N}\,$;
        \item $f|_{X_i}$ is a continuous map for each $i \in \{1, \ldots, N\}$.
    \end{enumerate}
    \noindent Furthermore, we assume that $f$ has a finite number of critical points (i.e., points where $f$ is not locally invertible or where $f$ is discontinuous) within each continuity piece. The technical reasons behind this assumption are explained in item {\it (a)} of Remark \ref{CRITICAL}. Here, the topology considered on $X$ is the one induced by the usual topology of $\mathbb R$. Also, we denote the topological boundary operator within $X$ as $\partial$. Specifically, if $A \subset X$, then $\partial A$ represents those points in $X$ for which every open neighborhood intersects $A$ and $X \setminus A$.\\[-2ex]
    
    For a pc-map $f: X \to X$ we will consider its continuity pieces to be sorted; i.e., for all $i, j \in\{1, \ldots, N\}$ with $i < j$, we assume that $x < y$ for every $(x, y) \in X_i \times X_j$. Let $\Delta_f$ be the {\em set of boundaries of the continuity pieces} of $f$; that is,
    \[
        \Delta_f := \bigcup_{i = 1}^N \partial X_i.
    \]
    For convenience of notation, we assume that $f$ is continuous at the endpoints of $X$, which implies that the endpoints of $X$ do not belong to $\Delta_f$  (see item {\it (b)} of Remark \ref{CRITICAL}). Note that the continuity pieces $X_1$ and $X_N$ are semi-open.  \\[-2ex]

    Now, for every $m \geq 0$, we define the set
    \[
        \Delta_f^m = (\Delta_f)^m:= \bigcup_{j = 0}^{m - 1} f^{- j}(\Delta_f)\subset X,
    \]
    where, by convention, we consider that $\Delta_f^0 = \emptyset$. As $f$ has a finite number of critical points, for every $m \geq 0$ the set $\Delta_f^m$ is finite and, consequently, a closed set. Furthermore, for every $k \geq 1$, we define the set of boundaries of the continuity pieces of the map $f^k$, denoted by $\Delta_{f^k}$, as the collection induced by $\Delta_f$; that is, $\Delta_{f^k}:= \Delta_f^k$. \\[-2ex]
    
    We highlight that $\Delta_f$ can contain removable or jump discontinuities of $f$. As usual, the orbit of $x\in X$ (by $f$) is defined as the countable set $\{f^n(x)\}_{n \geq 0}$. Note that this notion depends on how $f$ is defined on $\Delta_f$. For this reason, it is convenient to work on the set $\widetilde X_f$, which is defined as the collection of points in $X$ whose orbit does not intersect $\Delta_f$; that is,
    \[
        \widetilde X_f:= \bigcap_{n \geq 0} f^{- n}(X \setminus \Delta_f).
    \]
    Note that $\widetilde X_f = X$ if and only if $\Delta_f = \emptyset$. For the iterates of $f$, we have the following result: 

    \begin{lemma}\label{Xtilde.F}
        For every pc-map $f: X \to X$ and every integer $k \geq 1$, $\widetilde X_f = \widetilde X_{f^k}$.
    \end{lemma}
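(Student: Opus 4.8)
The plan is to reduce both sides to the single condition ``the forward $f$-orbit of $x$ never meets $\Delta_f$'' and to match them by a reindexing argument. Recall that, by definition, $x\in\widetilde X_f$ precisely when $f^m(x)\notin\Delta_f$ for every $m\ge 0$, and that the set of continuity-piece boundaries of $f^k$ is declared to be $\Delta_{f^k}=\Delta_f^k=\bigcup_{j=0}^{k-1}f^{-j}(\Delta_f)$. Hence, since $f^k$ is an everywhere-defined map and $(f^k)^{-n}=f^{-kn}$,
\[
  \widetilde X_{f^k}=\bigcap_{n\ge 0}(f^k)^{-n}\bigl(X\setminus\Delta_{f^k}\bigr)=\bigcap_{n\ge 0}f^{-kn}\Bigl(X\setminus\bigcup_{j=0}^{k-1}f^{-j}(\Delta_f)\Bigr).
\]
The case $k=1$ is already contained in this, since $\Delta_f^1=\Delta_f$, and the computation below applies to it verbatim.

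First I would record the elementary identities valid for the total map $f$ (and hence for each iterate $f^j$): for any $A\subseteq X$ and $j\ge 0$ one has $X\setminus f^{-j}(A)=f^{-j}(X\setminus A)$, taking preimages commutes with arbitrary intersections, and $f^{-a}\circ f^{-b}=f^{-(a+b)}$. Applying the first two to the union inside the display gives $X\setminus\bigcup_{j=0}^{k-1}f^{-j}(\Delta_f)=\bigcap_{j=0}^{k-1}f^{-j}(X\setminus\Delta_f)$, and then distributing $f^{-kn}$ over this intersection and composing preimages yields
\[
  \widetilde X_{f^k}=\bigcap_{n\ge 0}\ \bigcap_{j=0}^{k-1}f^{-(kn+j)}(X\setminus\Delta_f).
\]

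The last step is purely combinatorial: by Euclidean division, $\{\,kn+j: n\ge 0,\ 0\le j\le k-1\,\}=\mathbb Z_{\ge 0}$, so the double intersection collapses to $\bigcap_{m\ge 0}f^{-m}(X\setminus\Delta_f)=\widetilde X_f$, which is the claim. I do not expect any genuine obstacle here; the only points deserving an explicit line are that $f$ is defined on all of $X$, including on $\Delta_f$, which is what legitimises the complement/preimage identity, and a brief remark on the $\Delta_f^0=\emptyset$ convention so that the degenerate indices are handled consistently.
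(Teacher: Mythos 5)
Your proof is correct and takes essentially the same approach as the paper's: both hinge on unwinding $\Delta_{f^k}=\bigcup_{j=0}^{k-1}f^{-j}(\Delta_f)$ and the reindexing $m=kn+j$. The paper argues element-by-element in two inclusions (one declared clear, the other traced through $f^{jk+\ell}(x)\notin\Delta_f$), while you package the same reindexing as a single chain of set-theoretic equalities — a cleaner presentation but the identical idea.
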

    \begin{proof}
        For every integer $k \geq 1$, it is clear that $\widetilde X_f \subset \widetilde X_{f^k}$. Reciprocally, if $x \in \widetilde X_{f^k}$, we have that $x \in f^{- j k}(X \setminus \Delta_{f^k})$ for all $j \geq 0$. This implies that
        \[
            f^{j k}(x) \in X \setminus \Delta_{f^k} \quad \forall \, j \geq 0.
        \]
        Thus, we deduce that $f^{j k + \ell}(x) \notin \Delta_f$ for every $j \geq 0$ and $\ell \in \{0, \ldots, k - 1\}$. Therefore, $f^i(x) \notin \Delta_f$ for every $i \geq 0$. This argument shows that $\widetilde X_f = \widetilde X_{f^k}$.
    \end{proof}
    
    Since each pc-map has a finite number of critical points, it is not difficult to see that $\widetilde X_f$ is non-empty and dense in $X$.\\[-2ex]
    
    Recall that a set $A\subset X$ is $f$-{\em invariant} if $f(A) \subset A$. Note that $\widetilde X_f$ is an $f$-invariant --usually non-compact-- set, and the restriction $f|_{\widetilde X_f}: \widetilde X_f \to \widetilde X_f$ is a continuous self-map. As it happens with the concept of orbit, the notion of invariance depends on the definition of $f$ over $\Delta_f$. There are 2 ways to solve this: ({\it 1}) Require $A \subset X$ to be an $f$-invariant subset such that $A \cap \widetilde X_f \neq \emptyset$ (note that, in this case, $A \cap \widetilde X_f$ is $f$-invariant as well); ({\it 2}) Using a new type of invariance called {\em pseudo-invariance} (defined below), which does not depend on how $f$ is defined on $\Delta_f$.
    
    We say that a set $A \subset X$ is $f$-{\em pseudo-invariant} if for every $x_0 \in A$, the limit of $f(x)$ as $x \to x_0$ either from above or below, exists and belongs to $A$. In particular, note that if $A \subset X$ is such that $A \cap \Delta_f = \emptyset$, then $A$ is $f$-invariant if and only if $A$ is $f$-pseudo-invariant. Also, if $A$ is an $f$-pseudo-invariant set, then $f(A \setminus \Delta_f) \subset A$.
    Under some conditions, it can be guaranteed that any $f$-pseudo-invariant set intersects $\widetilde X_f$: if $f$ is a pc-map such that for every $c \in \Delta_f$, the limit of $f(x)$ as $x \to c$, either from above or below, exists and belongs to $\widetilde X_f$, then every $f$-pseudo-invariant subset contained in $X$ intersects $\widetilde X_f$ (this result follows directly from the definition). However, it will be convenient to require the corresponding $f$-(pseudo-)invariant subset intersects $\widetilde X_f$ in each statement.
    
    Although invariant and $f$-pseudo-invariant sets generate invariant sets when we intersect them with $\widetilde X_f$ (and the intersection is non-empty), both approaches are interesting because invariance is neither a necessary nor a sufficient condition for $f$-pseudo-invariance \footnote{For example, the $f$-pseudo-invariant Cantor sets that can appear in the asymptotic dynamics of piecewise contracting interval maps are not invariant, in general (see \cite{CCG20}). On the other hand, any invariant subset containing an isolated discontinuity $c \in \Delta_f$ such that $f(c) = c$ is not $f$-pseudo-invariant.}. 
    
    \begin{definition} \label{CONSTANZA}
        Let $X$ and $Y$ be two compact intervals and let $f: X \to X$ and $g: Y \to Y$ be two pc-maps defined on $X$ and $Y$, respectively. We say that $f$ is {\em topologically semi-conjugate} (respectively, {\em conjugate}) to $g$ if there exists a continuous surjection (respectively, homeomorphism) $\varphi: X \to Y$ such that $\varphi^{-1}(\Delta_g)=\Delta_f$ and $\varphi \circ f = g \circ \varphi$ on $X \setminus \Delta_f$.
    \end{definition}

    
    \begin{remark}\label{REMARK.CONJUGATE}
        \begin{enumerate}
            \item In Definition \ref{CONSTANZA}, the condition $\varphi^{- 1}(\Delta_g) = \Delta_f$ is not needed when $\Delta_f$ and $\Delta_g$ only comprise jump discontinuity points of $f$ and $g$, respectively. We use it only to be general without imposing that assumption.
            \item If $f: X \to X$ and $g: Y \to Y$ are two pc-maps such that $f$ is topologically semi-conjugate to $g$ by a continuous surjection $\varphi: X \to Y$, then $\# \Delta_f \geq \# \Delta_g$, $\varphi(\Delta_f) = \Delta_g$ and $\varphi(\widetilde X_f) = \widetilde Y_g$.
        \end{enumerate}
    \end{remark}

    We recall that the {\it diameter} of a set $E \subset X$, denoted by $\diam(E)$, is defined as the supremum of the distances between all pairs of points that belong to $E$ (by convention, we say that $\diam(\emptyset) = 0$). Thus, the {\em diameter of a collection} $\mathcal C$ of subsets of $X$ is given by
    \[
        \diam(\mathcal C) := \sup_{C \in \mathcal C} \diam(C).
    \]
    Note that $\diam(\mathcal C) \leq \diam(X) < \infty$.

    To preserve the usual notation, the topological entropy that we will construct in Sections \ref{sec3} and \ref{sec4} will be denoted by $h_{top}$. Now, we establish the main results of this paper, which are divided into five theorems.\\[-2ex]

    \begin{theorem} \label{ENTROPY}
        Let $X$ and $Y$ be two compact intervals and let $f: X \to X$ and $g: Y \to Y$ be two pc-maps. Then, the following properties hold:
        \begin{enumerate}
            \item[(1)] If $f$ is continuous, then $h_{top}(f)$ coincides with the AKM-entropy of $f$.
            \item[(2)] If $A$ is a non-empty, compact and $f$-(pseudo-)invariant subset of $X$ such that $A\cap\widetilde X_f\neq\emptyset$, then $h_{top}(f|_A) \leq h_{top}(f)$.
            \item[(3)] If $f$ is topologically semi-conjugate (respectively, conjugate) to $g$, then $h_{top}(f)\geq h_{top}(g)$ (respectively, $h_{top}(f) = h_{top}(g)$).
            \item[(4)] $h_{top}(f^k) = k \, h_{top}(f)$ for every integer $k \geq 0$ (by convention, $f^0$ is the identity map).
            \item[(5)] If $X = A_1 \cup \ldots \cup A_p$, where $A_i$ is a compact $f$-(pseudo-)invariant subset of $X$ such that $A_i \cap \widetilde X_f \neq \emptyset$ for each $1\leq i\leq p$, then
            \[
                h_{top}(f) = \max_{1 \leq i \leq p} \! \big\{h_{top}(f|_{A_i})\big\}.
            \]
            \item[(6)] If $h_{top}(f) < \infty$ (respectively, $h_{top}(f) = \infty$) and $\{\mathcal C_k\}_{k \geq 1}$ is a sequence of open covers of $X$ such that $\diam(\mathcal C_k)\to 0$ as $k \to \infty$, then $\lim_{k \to \infty} h_{top}(f, \mathcal C_k) = h_{top}(f)$ (respectively, $\lim_{k \to \infty} h_{top}(f, \mathcal C_k) = \infty$).
        \end{enumerate}
    \end{theorem}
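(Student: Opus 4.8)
The plan is to treat the six items one at a time, in each case following the classical Adler--Konheim--McAndrew argument but executed at the level of the open covers, dynamical refinements, and minimal cardinalities introduced in Sections~\ref{sec3}--\ref{sec4}, always using $\widetilde X_f$ as the effective phase space. I would establish (1) and (6) first, since they are foundational and (6) is reused as a tool in the proof of (4). For (1), when $f$ is continuous one takes a single continuity piece $X_1=\mathrm{int}(X)$, so that $\Delta_f=\emptyset$ and $\widetilde X_f=X$; in this case every ingredient of the construction of Section~\ref{sec4} reduces to the corresponding classical object, hence $h_{top}(f,\mathcal C)$ equals the AKM quantity for every open cover $\mathcal C$ and the suprema coincide. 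For (6), by construction $h_{top}(f)=\sup_{\mathcal C}h_{top}(f,\mathcal C)$, so $h_{top}(f,\mathcal C_k)\le h_{top}(f)$ holds trivially; for the reverse bound I would fix an open cover $\mathcal D$, observe that as soon as $\diam(\mathcal C_k)$ is below a Lebesgue number of $\mathcal D$ the cover $\mathcal C_k$ refines $\mathcal D$ and hence $h_{top}(f,\mathcal C_k)\ge h_{top}(f,\mathcal D)$, then pass to $\liminf_k$ and take the supremum over $\mathcal D$; the case $h_{top}(f)=\infty$ is handled the same way with "$\ge M$" in place of the supremum.

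For (2), an open cover $\mathcal C$ of $X$ restricts to the open cover $\{C\cap A:C\in\mathcal C\}$ of $A$, and since $f(A\cap\widetilde X_f)\subset A$ -- because $A$ is $f$-invariant, or because $f(A\setminus\Delta_f)\subset A$ when $A$ is $f$-pseudo-invariant -- the dynamical refinements of the restricted cover refine the restrictions of the dynamical refinements of $\mathcal C$, so the relevant minimal cardinalities do not increase; hence $h_{top}(f|_A,\mathcal C\cap A)\le h_{top}(f,\mathcal C)$, and taking suprema gives $h_{top}(f|_A)\le h_{top}(f)$. For (3), given an open cover $\mathcal D$ of $Y$ I would use the pull-back $\varphi^{-1}(\mathcal D)$, an open cover of $X$ since $\varphi$ is a continuous surjection; the intertwining $\varphi\circ f=g\circ\varphi$ on $X\setminus\Delta_f$, together with $\varphi^{-1}(\Delta_g)=\Delta_f$ and $\varphi(\widetilde X_f)=\widetilde Y_g$ (Remark~\ref{REMARK.CONJUGATE}), identifies the dynamical refinements of $\varphi^{-1}(\mathcal D)$ with pull-backs of the dynamical refinements of $\mathcal D$, so their minimal cardinalities dominate those for $g$; thus $h_{top}(f,\varphi^{-1}(\mathcal D))\ge h_{top}(g,\mathcal D)$ and, taking suprema, $h_{top}(f)\ge h_{top}(g)$. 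If $\varphi$ is a homeomorphism the same argument applied to $\varphi^{-1}$ gives the reverse inequality, hence equality.

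For (4), the case $k=0$ amounts to $h_{top}(\mathrm{id})=0$, which is immediate. For $k\ge1$, Lemma~\ref{Xtilde.F} gives $\widetilde X_{f^k}=\widetilde X_f$, so both entropies are computed on the same set; for an open cover $\mathcal C$ of $X$ the cover $\mathcal C^{(k)}:=\bigvee_{j=0}^{k-1}f^{-j}\mathcal C$ satisfies $\bigvee_{i=0}^{n-1}(f^k)^{-i}\mathcal C^{(k)}=\bigvee_{j=0}^{nk-1}f^{-j}\mathcal C$, whence $h_{top}(f^k,\mathcal C^{(k)})=k\,h_{top}(f,\mathcal C)$; since every open cover is refined by one of the form $\mathcal C^{(k)}$ after shrinking, one passes to the supremum, using (6) to compare the small-diameter covers relevant to $f$ and to $f^k$, and obtains $h_{top}(f^k)=k\,h_{top}(f)$. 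For (5), the inequality "$\ge$" follows from (2); for "$\le$", I would fix an open cover $\mathcal C$ of $X$ and use $\widetilde X_f=\bigcup_{i=1}^p(A_i\cap\widetilde X_f)$ to build a dynamical $(n,\mathcal C)$-cover of $X$ out of minimal dynamical $(n,\mathcal C\cap A_i)$-covers of the $A_i$, of total cardinality at most $p$ times their maximum, so that after applying $\frac1n\log(\cdot)$ and letting $n\to\infty$ the factor $p$ disappears and $h_{top}(f,\mathcal C)\le\max_{1\le i\le p}h_{top}(f|_{A_i})$; the supremum over $\mathcal C$ then closes the argument.

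The hard part will not be any individual item but the recurring bookkeeping that ties them together: each cover operation -- restriction to a (pseudo-)invariant subset, pull-back along $\varphi$, $k$-fold dynamical refinement, assembly over a finite union, and comparison against an arbitrary cover of small diameter -- must be shown to be compatible with the construction of Sections~\ref{sec3}--\ref{sec4} and, above all, insensitive to the values of $f$ on $\Delta_f$. In particular, for (6) the $\mathcal C_k$ are arbitrary open covers of $X$, not the ones singled out in the construction, so one must check that refining by such a $\mathcal C_k$ and passing to $\widetilde X_f$ remains monotone; once the refinement and monotonicity lemmas of Section~\ref{sec3} are in place, each of the six items reduces to its classical analogue.
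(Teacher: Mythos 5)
Your treatment of parts (1), (2), (3), (5), and (6) follows the paper's proofs essentially verbatim: (1) reduces to the classical setting by taking $\Delta_f=\emptyset$; (2) comes from Lemma~\ref{TOPOCOVERS}~(b); (3) pulls back $Y$-open covers through $\varphi$ and uses the intertwining on $X\setminus\Delta_f$, matching Lemma~\ref{CONJ}; (5) bounds $\aleph_{\widetilde X}(\mathcal C^n)$ by a sum over the $A_i$ (the paper invokes Lemma~\ref{ELEMENTARY}, your ``$p$ times the max'' is the same estimate); and (6) is exactly the Lebesgue-number argument of Lemma~\ref{CKCK}.

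The gap is in part (4), in the direction $h_{top}(f^k)\ge k\,h_{top}(f)$. You propose to use $\mathcal C^{(k)}:=\bigvee_{j=0}^{k-1}f^{-j}\mathcal C$ and the identity $\bigvee_{i=0}^{n-1}(f^k)^{-i}\mathcal C^{(k)}=\bigvee_{j=0}^{nk-1}f^{-j}\mathcal C$, then ``pass to the supremum.'' But in the piecewise-continuous setting this $\mathcal C^{(k)}$ is \emph{not} an $X$-open cover: $f^{-j}(C)$ need not be open when $f$ is discontinuous, so $\mathcal C^{(k)}$ cannot be fed into the supremum that defines $h_{top}(f^k)$. If instead one uses the paper's operator and sets $\mathcal B:=\bigvee_{j=0}^{k-1}f^{-j}(\mathcal C\setminus\Delta_f)$, then $\mathcal B$ is $X$-open but only covers $X\setminus\Delta_f^k$, so it is not an $X$-open cover either. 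The paper's resolution (Lemma~\ref{PROP.ENTROPY.4}) is to augment $\mathcal B$ with finitely many small intervals $(y-\epsilon,y+\epsilon)\cap X$ around each $y\in\Delta_f^k$, obtaining genuine $X$-open covers $\mathcal B(\epsilon)$, observe the monotonicity $\mathcal B(\epsilon)\preccurlyeq\mathcal B(\eta)$ for $\epsilon\ge\eta$, and then identify $\sup_{\epsilon>0}h_{top}(f^k,\mathcal B(\epsilon))$ with $h_{top}(f^k,\mathcal B)$ before unwinding the $\vee$-products via \eqref{VIC.2}. Your appeal to part (6) here does not substitute for this: part (6) controls the limit of $h_{top}(f,\mathcal C_k)$ for covers of vanishing diameter, but the obstruction in (4) is not about cover diameter, it is about the failure of openness (or, after removing $\Delta_f$, the failure to cover the finite exceptional set $\Delta_f^k$). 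You need the $\mathcal B(\epsilon)$ approximation, or an equivalent device, to close this direction.
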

    
    Moreover, the following result will enable us to compute the entropy using the classic Bowen's formula while retaining the same good properties of the compact-continuous setting.

    \begin{theorem}[{[Bowen's formula]}]\label{BOWEN.MAIN}
        Let $f: X \to X$ be a pc-map. If $A$ is a non-empty, compact and $f$-(pseudo-)invariant subset of $X$ such that $A\cap\widetilde X_f\neq\emptyset$, then
        \begin{equation}
            h_{top}(f|_A) = \lim_{\epsilon \to 0} \limsup_{n \to \infty} \frac{\log s_n(A; f, \epsilon)}{n} = \lim_{\epsilon \to 0} \limsup_{n \to \infty} \frac{\log r_n(A; f, \epsilon)}{n},\label{LIMITS.BOWEN.MAIN}
        \end{equation}
        where $s_n(A; f, \epsilon)$ (respectively, $r_n(A; f, \epsilon)$) is the maximal cardinality of $(n, \epsilon)$-separated sets (respectively, minimal cardinality of $(n, \epsilon)$-spanning sets) of $A \cap \widetilde X_f$. Moreover, for the calculation using Bowen's formula, the value of $h_{top}(f|_A)$ does not depend on the metric that generates the induced topology on $A\cap\widetilde X_f$.
    \end{theorem}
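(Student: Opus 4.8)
The plan is to carry out, on the (generally non-compact) invariant set $A\cap\widetilde X_f$, the classical argument identifying Bowen--Dinaburg-type quantities with open-cover entropy, while keeping explicit control of the finite sets $\Delta_f^n$. Fix a metric $d$ on the compact set $A$ generating its topology and write $d$ also for its restriction to $A\cap\widetilde X_f$; since $h_{top}(f|_A)$ is built from open covers and involves no metric, it suffices to show that both limits in \eqref{LIMITS.BOWEN.MAIN} computed with $d$ equal $h_{top}(f|_A)$, as the asserted metric independence is then immediate. I would begin with the routine preliminaries. The iterates $f^j$ are continuous on $\widetilde X_f$, hence on $A\cap\widetilde X_f$, which is $f$-invariant in both the invariant and the pseudo-invariant case (points of $\widetilde X_f$ avoid $\Delta_f$, so the one-sided limit defining pseudo-invariance coincides with $f$ there); consequently the dynamical metrics $d_n(x,y)=\max_{0\le j<n}d(f^jx,f^jy)$ all induce the original topology, and the map $(\id,f,\dots,f^{n-1})$ embeds $(A\cap\widetilde X_f,d_n)$ isometrically into the compact space $A^n$ with the max metric, so that $(A\cap\widetilde X_f,d_n)$ is totally bounded and $s_n(A;f,\epsilon)$, $r_n(A;f,\epsilon)$ are finite. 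The standard inequalities $r_n(A;f,\epsilon)\le s_n(A;f,\epsilon)\le r_n(A;f,\epsilon/2)$ then go through verbatim and, with monotonicity in $\epsilon$, show that both limits in \eqref{LIMITS.BOWEN.MAIN} exist in $[0,\infty]$ and coincide; denote their common value by $b(f|_A)$.

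For $b(f|_A)\le h_{top}(f|_A)$: given $\epsilon>0$, pick an open cover $\mathcal C$ of $A$ of the kind constructed in Section \ref{sec3} with $\diam(\mathcal C)<\epsilon$. If $F\subset A\cap\widetilde X_f$ is $(n,\epsilon)$-separated, then for distinct $x,y\in F$ some iterates $f^jx,f^jy$ with $j<n$ lie at distance larger than $\diam(\mathcal C)$, so $x$ and $y$ belong to different members of the $n$-th dynamical refinement of $\mathcal C$; this assignment of members is unambiguous because the orbit of a point of $\widetilde X_f$ never meets $\Delta_f^n$. Hence $s_n(A;f,\epsilon)$ is at most the minimal cardinality of a subcover of that refinement, so $\limsup_n \tfrac1n\log s_n(A;f,\epsilon)\le h_{top}(f|_A,\mathcal C)\le h_{top}(f|_A)$, and letting $\epsilon\to0$ yields the bound. (Theorem \ref{ENTROPY}(6) could also be invoked here, but the supremum defining $h_{top}$ suffices.)

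The reverse inequality $h_{top}(f|_A)\le b(f|_A)$ is the substantive part. Fix an open cover $\mathcal U$ of $A$ as in Section \ref{sec3}, with Lebesgue number $\delta>0$, and take an $(n,\delta/3)$-spanning set $E\subset A\cap\widetilde X_f$ of minimal cardinality $r_n(A;f,\delta/3)$. For $x\in E$ and $0\le j<n$ the ball $B(f^jx,\delta)$ lies in some $U_{x,j}\in\mathcal U$, so each $V_x:=\bigcap_{j<n}f^{-j}(U_{x,j})$ is a member of the $n$-th dynamical refinement of $\mathcal U$ containing the $d_n$-ball of radius $\delta$ about $x$, and these $d_n$-balls cover $A\cap\widetilde X_f$. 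The step I expect to be the main obstacle is promoting this to a subcover of the whole refinement, which must cover $A$ (or $A\setminus\Delta_f^n$): one has to reach the points of $A$ lying outside $A\cap\widetilde X_f$, and I would do so by combining density of $\widetilde X_f$ in $X$, continuity of $f^j$ on $X\setminus\Delta_f^j$ for $j<n$ (so that a point of $A\setminus\Delta_f^n$ is $d_n$-approximated by points of $A\cap\widetilde X_f$, the factor $3$ absorbing the approximation), and the linear bound $\#\Delta_f^n\le n\,\#\Delta_f$, to conclude that the extra points are covered by a number of additional members that is sub-exponential in $n$. Granting this, one gets $\limsup_n\tfrac1n\log N\le\limsup_n\tfrac1n\log r_n(A;f,\delta/3)\le b(f|_A)$ for the minimal cardinality $N$ of a subcover of the $n$-th refinement of $\mathcal U$, whence $h_{top}(f|_A,\mathcal U)\le b(f|_A)$; taking the supremum over all such $\mathcal U$ gives $h_{top}(f|_A)\le b(f|_A)$ and completes the proof. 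Besides that transition across the discontinuity set, the remaining care is in checking that the collection- and minimal-cardinality formalism of Section \ref{sec3} interacts with Lebesgue numbers and with dynamical refinements exactly as in the compact-continuous setting, and that no step depends on the particular metric $d$, so that the final metric-independence clause is genuinely delivered.
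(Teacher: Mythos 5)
Your first inequality (the bound $b(f|_A)\le h_{top}(f|_A)$ via separated sets) matches the paper's argument in Theorem~\ref{BOWEN.FORMULA}, and your preliminaries --- total boundedness of the $d_n$-metrics on $A\cap\widetilde X_f$, finiteness of $s_n$ and $r_n$, and the chain $r_n\le s_n\le r_n(\,\cdot\,;\epsilon/2)$ --- are exactly the content of Lemmas~\ref{SCOMPACT}, \ref{SN.FINITE}, \ref{RELCOM} and \ref{LEMMAI}. The difficulty is in the reverse inequality, where you single out as ``the main obstacle'' the need to promote a cover of $A\cap\widetilde X_f$ to one of $A$, or at least of $A\setminus\Delta_f^n$, using density of $\widetilde X_f$, a radius reduction to $\delta/3$, and a sub-exponential count of extra members. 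In the paper's framework this obstacle simply does not exist, because it is eliminated by the definition. By Definition~\ref{ENTROPY.DEF}, $h_{top}(f|_A,\mathcal C)=\lim_n\tfrac1n\log\aleph_{A\cap\widetilde X}(\mathcal C^n)$, and $\aleph_{A\cap\widetilde X}$ is the minimal number of members of $\mathcal C^n$ needed to cover $A\cap\widetilde X$ --- not $A$, and not $A\setminus\Delta_f^n$. So in Theorem~\ref{BOWEN.FORMULA} it suffices, with $\epsilon$ a Lebesgue number of $\mathcal C$, to observe that each modified dynamical ball $Q_A^n(x;\epsilon)$ is contained in a single element of $(\mathcal C\cap A)^n$; the balls centered at an $(n,\epsilon)$-spanning set $E\subset A\cap\widetilde X_f$ then exhibit a subcollection $\mathcal T\subset(\mathcal C\cap A)^n$ of size at most $\#E$ covering $A\cap\widetilde X$, and Lemma~\ref{NPROPERTIES}(b) together with Remark~\ref{REMARK.CAP} give $\aleph_{A\cap\widetilde X}(\mathcal C^n)\le r_n(A;f,\epsilon)$ directly. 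Restricting the cardinality count to $A\cap\widetilde X$ is precisely the design choice that keeps the finite set $\Delta_f^n$ out of the argument.

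Your proposed detour is also more fragile than you acknowledge: members of $\mathcal C^n=\bigvee_{j<n}f^{-j}(\mathcal C\setminus\Delta)$ contain no point whose orbit hits $\Delta$ before time $n$, and approximating $y\in A\setminus\Delta_f^n$ in the $d_n$-metric by a point of $A\cap\widetilde X_f$ uses continuity of $f^j$ on $X\setminus\Delta_f^j$, which is not uniform near $\Delta_f^n$; turning this into a polynomial-in-$n$ bound on the number of extra covering members would require an additional argument you have not supplied. None of this is needed once you track the correct target set for $\aleph$, so the fix to your proof is to revisit which set $\aleph_{A\cap\widetilde X}$ is actually required to cover.
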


    \begin{remark}
        In \eqref{LIMITS.BOWEN.MAIN}, the values 
        \begin{equation*}
            \lim_{\epsilon \to 0} \limsup_{n \to \infty} \frac{\log s_n(A; f, \epsilon)}{n} \quad\text{ and }\quad \lim_{\epsilon \to 0} \limsup_{n \to \infty} \frac{\log r_n(A; f, \epsilon)}{n}
        \end{equation*}
        refer to the limits obtained via the classic Bowen's formula (see \cite[page 58]{R17}) for the map $f$ restricted to the invariant subset $A \cap \widetilde X_f$.
    \end{remark}

    \vspace*{0ex}
    Next, for each $n \geq 1$, we denote by $c_n \geq 1$ the smallest number of intervals on which $f^n$ is monotonic and {\em essentially} continuous; that is, 
    \begin{align}
        c_n:= \min\!\big\{\#\mathcal P \ : \ \mathcal P \in \Phi(f^n)\big\}, \label{Emily}
    \end{align}
    where $\Phi(f^n)$ denotes the collection of partitions $\mathcal P$ comprising intervals of $X$ with non-empty interior such that for all $P\in\mathcal P$, the restriction of $f^n$ to the interior of $P$ is monotonic and {\it continuously extendable} (i.e., every point within the interior of $P$ is either a point where $f^n$ is continuous or a removable discontinuity of $f^n$). Note that adding removable discontinuities to the map $f$ increases its number of continuity pieces, but does not modify $c_n$.\\[-2ex]

    The following result relates the entropy with formula \eqref{CN} adapted to this context.
    
    \begin{theorem}[{[Misiurewicz-Szlenk formula]}] \label{main2}
        Let $f: X\to X$ be a pc-map. Then
        \[
            h_{top}(f) = \lim_{n \to \infty} \frac{1}{n} \log c_n,
        \]
        where $c_n$ is defined as in \eqref{Emily}.
    \end{theorem}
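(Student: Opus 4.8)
The plan is to sandwich $h_{top}(f)$ between $\lim_n\tfrac1n\log c_n$ from both sides, using Theorem~\ref{BOWEN.MAIN} with $A=X$ (which is compact, $f$-invariant and meets $\widetilde X_f$) to compute $h_{top}(f)=h_{top}(f|_X)$ as $\lim_{\epsilon\to0}\limsup_n\tfrac1n\log r_n(X;f,\epsilon)=\lim_{\epsilon\to0}\limsup_n\tfrac1n\log s_n(X;f,\epsilon)$ for the Euclidean metric. First I would record that $\lambda:=\lim_n\tfrac1n\log c_n$ exists: pulling an optimal $\mathcal Q\in\Phi(f^m)$ back through the monotone branches of an optimal $\mathcal P\in\Phi(f^n)$ produces an element of $\Phi(f^{n+m})$ with at most $c_nc_m$ pieces, so $\log c_n$ is subadditive and Fekete's lemma gives $\lambda=\inf_n\tfrac1n\log c_n$. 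Let $\mathcal Q_j\in\Phi(f^j)$ be optimal and set $\hat{\mathcal P}_n:=\mathcal Q_0\vee\cdots\vee\mathcal Q_{n-1}$; its $\hat c_n$ pieces are exactly the maximal intervals on which every $f^0,\dots,f^{n-1}$ is monotone and essentially continuous, and since the common refinement of partitions of an interval into intervals only unions their breakpoints, $c_n\le\hat c_n\le 1+\sum_{j<n}c_j$; because $\tfrac1j\log c_j\to\lambda$, this forces $\lim_n\tfrac1n\log\hat c_n=\lambda$ as well. Note finally that $f^j$ is continuous at every point of $\widetilde X_f$, so on $X\cap\widetilde X_f$ the map $f^j$ agrees with the continuous extension of its restriction to the ambient piece of $\hat{\mathcal P}_n$.

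For the upper bound, fix $n$ and $\epsilon>0$. On a piece $I\in\hat{\mathcal P}_n$ and for $j<n$, the continuous extension of $f^j|_I$ is monotone, hence maps $I$ onto an interval of length $\le\diam(X)$. Cutting $I$ from left to right, introducing a new cut whenever the $f^j$-image of the current subinterval would reach diameter $\epsilon$ for some $j<n$ and charging that cut to the offending $j$, one finds at most $\diam(X)/\epsilon$ cuts per $j$ (the images of the subintervals are essentially disjoint in $f^j(I)$), so $I$ splits into at most $1+n\,\diam(X)/\epsilon$ subintervals on each of which $\diam(f^j(\cdot))<\epsilon$ for all $j<n$. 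Choosing one point of $\widetilde X_f$ in each subinterval yields an $(n,\epsilon)$-spanning set of $X\cap\widetilde X_f$ of size at most $\hat c_n\,(1+n\,\diam(X)/\epsilon)$, so $r_n(X;f,\epsilon)\le\hat c_n\,(1+n\,\diam(X)/\epsilon)$ and $h_{top}(f)\le\limsup_n\tfrac1n\log\hat c_n=\lambda$.

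The lower bound is the delicate direction, and here I would follow Misiurewicz--Szlenk~\cite{MS80}. The main point is that monotone pieces of $f^n$ with short image are asymptotically negligible: writing $c_n^\eta$ for the number of pieces $J$ of an optimal partition in $\Phi(f^n)$ with $\diam(f^n(J))\ge\eta$, one shows, adapting the combinatorial estimates of~\cite{MS80} to the present setting (using that $f$ is piecewise strictly monotone with finitely many critical points, and noting that jump discontinuities only help, since across a jump of size $\ge\epsilon$ the two corresponding pieces are automatically $\epsilon$-apart), that $\lim_{\eta\to0}\limsup_n\tfrac1n\log c_n^\eta=\lambda$. A pigeonhole argument over the $c_n^\eta$ images (each an interval of length $\ge\eta$) then extracts, for every $\delta>0$, an integer $n$ and pairwise disjoint intervals $R_1,\dots,R_p$ on which $f^n$ is monotone, with $f^n(R_a)\supseteq R_1\cup\cdots\cup R_p$ for all $a$ and $\tfrac1n\log p>\lambda-\delta$ --- a topological horseshoe for $f^n$. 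Since $\widetilde X_f$ is dense and the nested preimage intervals prescribed by this horseshoe have non-empty interior, for each word $w\in\{1,\dots,p\}^k$ one picks $x_w\in\widetilde X_f$ with $f^{jn}(x_w)\in R_{w_j}$ for $j<k$; if $w\ne w'$ first differ at coordinate $j$ then $\dist(f^{jn}(x_w),f^{jn}(x_{w'}))\ge\epsilon_0:=\min_{a\ne b}\dist(R_a,R_b)>0$, so the $p^k$ points form an $(kn,\epsilon_0/2)$-separated subset of $X\cap\widetilde X_f$. Hence $s_{kn}(X;f,\epsilon_0/2)\ge p^k$, and Bowen's formula gives $h_{top}(f)\ge\tfrac1n\log p>\lambda-\delta$; letting $\delta\to0$ yields $h_{top}(f)\ge\lambda$, and with the upper bound $h_{top}(f)=\lambda=\lim_n\tfrac1n\log c_n$.

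The only genuinely nontrivial step is the estimate $\lim_{\eta\to0}\limsup_n\tfrac1n\log c_n^\eta=\lambda$ underlying the horseshoe extraction; the rest is bookkeeping, provided one is careful to choose all auxiliary points inside $\widetilde X_f$ and to absorb removable discontinuities through the ``essentially continuous'' clause (they change neither $c_n$ nor the dynamics on $\widetilde X_f$). An alternative route to both inequalities is to blow up the jump discontinuities of $f$ into intervals to obtain a continuous piecewise monotone map, apply the classical Misiurewicz--Szlenk formula there, and transfer back; this, however, requires separately verifying that the blow-up preserves $h_{top}$ and does not alter the exponential growth rate of the lap numbers.
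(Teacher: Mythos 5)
Your proposal takes a genuinely different route and contains a substantive gap on the lower bound. For the upper bound $h_{top}(f)\le\lim_n\frac1n\log c_n$ you pass through Theorem~\ref{BOWEN.MAIN} and build $(n,\epsilon)$-spanning sets of $\widetilde X$ of size at most $\hat c_n\big(1+n\,\diam(X)/\epsilon\big)$; that argument is correct, and it is a clean alternative to the paper's conditional-entropy route (Lemma~\ref{COVER.B.N.}, Remark~\ref{REMARK.TECH}, Lemma~\ref{TECH.C.E}, assembled in Theorem~\ref{HTOP.LEQ}). Both capture the same phenomenon: conditioned on the monotone pieces, at most polynomially many extra cells are needed. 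Incidentally, the inequality $c_n\le\hat c_n$ that you record in passing is false --- for $n=1$ one has $\hat c_1=\#\mathcal Q_0=1<N=c_1$ --- but your upper bound does not use it, only $\hat c_n\le 1+\sum_{j<n}c_j$, so this is cosmetic. A cleaner choice is to work directly with $\mathcal D^n$, which has exactly $c_n$ pieces and already controls $f^0,\dots,f^{n-1}$.

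The gap is the lower bound. You propose the Misiurewicz--Szlenk horseshoe extraction, but you explicitly leave its crucial ingredient, the estimate $\lim_{\eta\to0}\limsup_n\frac1n\log c_n^\eta=\lim_n\frac1n\log c_n$, as an ``adaptation of \cite{MS80}''; carrying out that combinatorial analysis for piecewise continuous maps with the modified definition of $c_n$ is the bulk of the work and is not done, so the inequality $h_{top}(f)\ge\lim_n\frac1n\log c_n$ is not established. More to the point, you have missed that in this cover-based formulation the lower bound is the \emph{easy} direction and horseshoes are not needed at all. The family $\mathcal D^n$ is a pairwise disjoint collection of $c_n$ open intervals covering $\widetilde X$, so every element is essential and $\aleph_{\widetilde X}(\mathcal D^n)=c_n$; hence $h_{top}(f,\mathcal D)=\lim_n\frac1n\log c_n$ immediately. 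The paper's Lemma~\ref{CEPSILON} then only has to approximate $\mathcal D$ by the genuine $X$-open covers $\mathcal C_\epsilon=\mathcal D\cup\{(d-\epsilon,d+\epsilon)\cap X : d\in\Delta\}$ and let $\epsilon\to0$. The reason Bowen-formula proofs of this direction need horseshoes is that disjoint partition cells do not automatically produce $\epsilon$-separated orbits; the cover-based definition of $h_{top}$ sidesteps that issue entirely, and your proof should exploit it rather than re-import the hardest step of \cite{MS80}.
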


    In addition, we say that a pc-map is {\it injective} when it is injective over the union of its continuity pieces. Thus, as a consequence of Theorem \ref{main2}, we can state an extension of the following result: ``{\it injective continuous maps defined on a compact interval have zero topological entropy}'' (see \cite[Lemma 8.3.1]{V14}).

    \begin{corollary} \label{GIORNO-GIOVANNA}
        Let $f: X \to X$ an injective pc-map. Then $h_{top}(f) = 0$.
    \end{corollary}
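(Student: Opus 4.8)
The plan is to deduce the result from the Misiurewicz--Szlenk formula (Theorem~\ref{main2}): it suffices to show that the sequence $c_n$ grows at most linearly in $n$, so that $\lim_{n\to\infty}\frac1n\log c_n=0$ and hence $h_{top}(f)=0$. To do this I would control the sets $\Delta_f^n$, since $c_n$ is at most the number of connected components of $X\setminus\Delta_f^n$, which in turn is at most $\#\Delta_f^n+1$.

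First I would extract the consequences of injectivity. Since each restriction $f|_{X_i}$ is continuous and injective on the interval $X_i$, it is strictly monotonic; in particular $f$ has no critical points inside the continuity pieces, and the images $f(X_1),\dots,f(X_N)$ are pairwise disjoint open intervals (if $y\in f(X_i)\cap f(X_j)$ with $i\neq j$, two distinct points of $\bigcup_\ell X_\ell$ would share the image $y$). Moreover, because $\widetilde X_f\subset X\setminus\Delta_f\subset\bigcup_\ell X_\ell$, the restriction $f|_{\widetilde X_f}$ is injective, and therefore so is $f^n|_{\widetilde X_f}$ for every $n\geq 1$.

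Next I would analyse $\Delta_f^n$ inductively. Fix $n\geq 1$ and let $I$ be a connected component of $X\setminus\Delta_f^n$. On $I$ the itinerary $j\mapsto$ (index of the continuity piece containing $f^j(x)$), for $0\le j<n$, is locally constant hence constant, so $f^n|_{\operatorname{int}(I)}$ is a composition of strictly monotone continuous branches and is therefore strictly monotone and continuous; in particular $f^n(I)$ is an interval and every point of $\operatorname{int}(I)$ is a continuity point of $f^n$, so the closures of the components of $X\setminus\Delta_f^n$ form a partition in $\Phi(f^n)$ and $c_n\le\#\Delta_f^n+1$. Since the components $I$ are pairwise disjoint, $\widetilde X_f$ is dense, and $f^n|_{\widetilde X_f}$ is injective, the intervals $\{f^n(I)\}_I$ have pairwise disjoint interiors. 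Now $\Delta_f^{n+1}\setminus\Delta_f^n=f^{-n}(\Delta_f)\setminus\Delta_f^n$, and for each component $I$ the number of its points sent by $f^n$ into $\Delta_f$ equals $\#\big(\Delta_f\cap f^n(I)\big)$ because $f^n|_I$ is injective; summing over $I$ and using that each point of $\Delta_f$ lies in at most two of the essentially disjoint intervals $f^n(I)$ gives $\#\big(\Delta_f^{n+1}\setminus\Delta_f^n\big)\le 2\,\#\Delta_f$. Iterating from $\#\Delta_f^1=\#\Delta_f$ yields $\#\Delta_f^n\le(2n-1)\,\#\Delta_f$, hence $c_n\le(2n-1)\,\#\Delta_f+1$, so $\frac1n\log c_n\to 0$ and Theorem~\ref{main2} gives $h_{top}(f)=0$.

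The routine parts (strict monotonicity of injective continuous branches, $f^n$ being a composition of such branches off $\Delta_f^n$) are standard; the step I expect to require the most care is the linear bound $\#\big(\Delta_f^{n+1}\setminus\Delta_f^n\big)\le 2\,\#\Delta_f$, where one must justify rigorously that the branch images $f^n(I)$ are genuine intervals with pairwise disjoint interiors --- this rests on the itinerary being locally constant on $X\setminus\Delta_f^n$ and on the injectivity of $f^n$ on the dense set $\widetilde X_f$ --- and check that coincidences at interval endpoints only contribute the harmless factor $2$.
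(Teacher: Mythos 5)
Your proposal is correct and follows essentially the same overall strategy as the paper: reduce to Theorem~\ref{main2} and show that $c_n$ grows at most linearly by bounding $\#\big(\Delta_f^{n+1}\setminus\Delta_f^n\big)$. The paper's justification of the linear bound is more direct than yours, though: it observes that $f^n$ is injective on $X\setminus\Delta_f^n$ itself, not just on $\widetilde X_f$. Indeed, if $x\neq y$ in $X\setminus\Delta_f^n$ have $f^n(x)=f^n(y)$, then $f^{n-1}(x),f^{n-1}(y)\in X\setminus\Delta_f$ and injectivity of $f$ off $\Delta_f$ gives $f^{n-1}(x)=f^{n-1}(y)$; iterating yields $x=y$. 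Consequently at most $\#\Delta_f=N-1$ points of $X\setminus\Delta_f^n$ are mapped by $f^n$ into $\Delta_f$, so $c_{n+1}\leq c_n+(N-1)$. This avoids your detour through the branch images $f^n(I)$ and their disjoint interiors (which you correctly flag as the delicate step, resting on a density argument via the co-countability of $X\setminus\widetilde X_f$), and it removes the extraneous factor $2$ in your bound. Both routes are valid and yield the same conclusion; the direct injectivity argument is simply shorter and sharper.
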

    
    Furthermore, under certain conditions, it is possible to obtain the value of the topological entropy as a function of the minimum number of monotonic pieces of $f$:\\[-1ex]

    \begin{theorem}\label{main3}
        Assume that $f: X\to X$ is a pc-map such that $\overline{f(X_i)} = X$ for each $i \in \{1, \ldots, N\}$, then $h_{top}(f) = \log c_1$, where $c_1$ represents the number of strictly monotonic pieces of $f$.\\
    \end{theorem}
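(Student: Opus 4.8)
The strategy is to reduce to a lap-counting estimate via Theorem~\ref{main2}, which gives $h_{top}(f)=\lim_{n\to\infty}\tfrac1n\log c_n$ with $c_n$ as in \eqref{Emily}. The plan is to prove that under the hypothesis
\[
c_1\,(c_{n-1}-1)+1\ \le\ c_n\ \le\ c_1\,c_{n-1}\qquad(n\ge 2),
\]
from which the claim follows immediately. The right-hand inequality is the usual sub-multiplicativity of lap numbers: taking an optimal $\mathcal Q\in\Phi(f^{n-1})$ with $\#\mathcal Q=c_{n-1}$, on each $Q\in\mathcal Q$ the map $f^{n-1}$ is monotone, so the $f^{n-1}|_Q$-preimage of each of the $c_1-1$ breakpoints of $f$ in $\operatorname{int}X$ is an interval; cutting every $Q$ at these points refines it into at most $c_1$ intervals on which $f^n=f\circ f^{n-1}$ is monotone and continuously extendable (disregarding, by the remark after \eqref{Emily}, any removable discontinuities produced by the composition). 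Hence $c_n\le c_1 c_{n-1}$, so $c_n\le c_1^{\,n}$ and $\limsup_{n}\tfrac1n\log c_n\le\log c_1$.

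For the lower bound I use the full-branch hypothesis. After discarding removable discontinuities (which alters neither $c_n$ nor the sets $\overline{f(X_i)}$), let $I_1,\dots,I_{c_1}$ be the strictly monotonic pieces of $f$; the hypothesis provides $\overline{f(I_j)}=X$ for every $j$. Then $f|_{\operatorname{int}I_j}$ is continuous and strictly monotone, so $f(\operatorname{int}I_j)$ is an open interval, and its closure is $X$ (since $\overline{f(I_j)}=X$ and $I_j\setminus\operatorname{int}I_j$ has at most two points); hence $f(\operatorname{int}I_j)=\operatorname{int}X$ and $\varphi_j:=f|_{\operatorname{int}I_j}\colon\operatorname{int}I_j\to\operatorname{int}X$ is a homeomorphism. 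Writing $f^n|_{\operatorname{int}I_j}=f^{n-1}\!\circ\varphi_j$ and using that $\varphi_j$ is a homeomorphism of intervals, a subinterval $V\subseteq\operatorname{int}I_j$ is one on which $f^n$ is monotone and continuously extendable if and only if $f^{n-1}$ has those properties on $\varphi_j(V)$; therefore the coarsest partition of $\operatorname{int}I_j$ into such pieces is the $\varphi_j^{-1}$-image of the coarsest partition of $\operatorname{int}X$ into pieces for $f^{n-1}$, and the latter has $c_{n-1}$ elements (restricting the optimal partition of $X$ for $f^{n-1}$ to $\operatorname{int}X$ only deletes two endpoints and keeps every piece nondegenerate). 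Consequently $f^n$ has at least $c_{n-1}-1$ unavoidable cut points (turning points or genuine discontinuities of $f^n$) inside each $\operatorname{int}I_j$; these $c_1(c_{n-1}-1)$ points are pairwise distinct and lie in $\operatorname{int}X$, so every $\mathcal P\in\Phi(f^n)$ must include them among its cut points, forcing $c_n\ge c_1(c_{n-1}-1)+1$.

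Iterating $c_n-1\ge c_1(c_{n-1}-1)$ yields $c_n-1\ge c_1^{\,n-1}(c_1-1)$, so for $c_1\ge 2$
\[
\tfrac1n\log c_n\ \ge\ \tfrac{n-1}{n}\log c_1+\tfrac1n\log(c_1-1)\ \xrightarrow[n\to\infty]{}\ \log c_1 ,
\]
while if $c_1=1$ the map $f$ is a homeomorphism, $c_n\equiv1$, and $h_{top}(f)=0=\log c_1$. Combined with $\tfrac1n\log c_n\le\log c_1$ this gives $\lim_n\tfrac1n\log c_n=\log c_1$, i.e.\ $h_{top}(f)=\log c_1$. I expect the main obstacle to be the lower bound — precisely, verifying that conjugating $f^n|_{\operatorname{int}I_j}$ by the branch homeomorphism $\varphi_j$ preserves the minimal monotone-and-essentially-continuous partition, so that each branch contributes exactly $c_{n-1}$ laps to $f^n$. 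This is where the hypothesis $\overline{f(I_j)}=X$ (a genuinely stronger statement than surjectivity of $f$, and the form in which the hypothesis is really used) is indispensable, and some care is needed with the essential-continuity clause in \eqref{Emily} and with the endpoints of $X$, which is why the whole argument is run on interiors.
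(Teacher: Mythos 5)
Your proof is correct and uses the same entry point as the paper (reduction via Theorem~\ref{main2} to an estimate on $c_n$), but the way you bound $c_n$ is genuinely different. The paper fixes the auxiliary assumption $f^{-j}(\Delta_f)\cap\Delta_f=\emptyset$ for $j\ge 1$ (licensed by Remark~\ref{INDEP.HTOP}), counts $\#\Delta_f^n$ exactly as $N^n-1$ via a preimage count that uses full branches to give each point $N$ preimages (Lemma~\ref{TECH}), and then identifies $c_n=\aleph_{\widetilde X}(\mathcal D^n)$ with $\#\Delta_f^n$ up to $\pm 1$ (Lemmas~\ref{CARDI.N}, \ref{LEQ.UNO}). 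You instead prove the recursive sandwich $c_1(c_{n-1}-1)+1\le c_n\le c_1 c_{n-1}$: the upper half is the standard sub-multiplicativity of lap numbers, and the lower half exploits the branch homeomorphisms $\varphi_j=f|_{\mathrm{int}\,X_j}\colon\mathrm{int}\,X_j\to\mathrm{int}\,X$ (this is exactly where $\overline{f(X_i)}=X$ enters) together with the factorization $f^n|_{\mathrm{int}\,X_j}=f^{n-1}\circ\varphi_j$ to pull back the $c_{n-1}-1$ unavoidable cut points of $f^{n-1}$ into each branch. The paper's route yields a sharper asymptotic ($c_n=N^n+O(1)$) at the cost of the auxiliary orbit-avoidance assumption and the $\Delta^n/\mathcal D^n$ bookkeeping; your route avoids that extra assumption entirely and stays within elementary lap-counting, but needs the slightly delicate observation (which you do flag) that a monotone reparametrization preserves minimal monotone-and-essentially-continuous partitions, i.e.\ that all $c_{n-1}-1$ internal cut points of a minimal partition are genuinely unavoidable and hence force cuts in every $\mathcal P\in\Phi(f^n)$. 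One small imprecision: when $f$ is piecewise \emph{strictly} monotone (the standing assumption of Section~\ref{sec7}), the $f^{n-1}|_Q$-preimage of a breakpoint is a single point, not an interval, but this only strengthens the cut-count and the bound $c_n\le c_1 c_{n-1}$ is unaffected.
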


    \begin{remark} \label{CRITICAL}
        \begin{itemize}
            \item[{\it (a)}] As any pc-map $f$ has a finite number of critical points, it follows that the preimages of $\Delta_f$ (by $f$) are also finite sets; that is,  $\#\big(f^{- k}(\Delta_f)\big) < \infty$ for every $k \geq 1$. This condition will be frequently used to ensure, for example, that the covers we construct are open (see Lemma \ref{OPENCOVER}), the iteration of a pc-map remains a pc-map (i.e., for every $k \geq 1$, $f^k$ has a finite number of continuity pieces), and $\widetilde X_f \neq \emptyset$. \\[-2ex]
            \item[{\it (b)}] With the assumptions we have made, such as assuming that the endpoints of $X$ are not boundaries of continuity pieces (i.e., they do not belong to $\Delta_f$), some aspects get simplified; for example, the following equality holds: $N = \#\Delta_f + 1$. However, there is no problem with considering one or both extremes of $X$ as points in $\Delta_f$. Doing this does not generate additional continuity pieces. Nonetheless, this change implies the modification of $X_1$, $X_N$, $\widetilde X_f$, and every definition that depends on $\Delta_f$. In particular, these changes could modify the statements or some of the procedures used to prove the results slightly. In any case, the conclusions of this paper are still valid.
        \end{itemize}
    \end{remark}

\section{Collections, covers and minimal cardinality} \label{sec3}
    In this section, we state the notation we use to define the entropy of pc-maps. Throughout this paper, we set a compact interval $X$ and a pc-map $f: X \to X$. For ease of notation, in all that follows, when there is no place for confusion, we will use $\Delta:= \Delta_f$ and $\widetilde X:= \widetilde X_f$, together with the concept of (pseudo-)invariance when talking about $f$-(pseudo-)invariance, omitting the name of the map.

    We denote by $\mathcal P(X)$ the power set of $X$ and call $\mathcal C \subset \mathcal P(X)$ a {\em collection of subsets} of $X$ (or just a {\em collection} of $X$) when $\emptyset \not \in \mathcal C$. Furthermore, we say that the collection $\mathcal C$ is {\em finite} if $\# \mathcal C < \infty$ (note that $\emptyset$ is a finite collection since $\#\emptyset = 0 < \infty$ and $\emptyset \notin \emptyset$). In particular, we say that the collection $\mathcal C$ is a {\em cover} of a subset $A\subset X$ if $\mathcal C \neq \emptyset$ and $A \subset \bigcup_{C \in \mathcal C} C$. Moreover, we define the {\em $j$-th pre-image of the collection} $\mathcal C$ as $f^{- j}(\mathcal C) := \{f^{- j}(C) \ : \; C \in \mathcal C\} \setminus \{\emptyset\}$, for every integer $j \geq 0$ (we recall that, by convention, $f^0$ is the identity map).
    
    Let $\mathcal C_1, \mathcal C_2, \ldots, \mathcal C_m$ be collections of $X$. We define the $\vee$-product of these collections as
    \begin{equation}
        \bigvee_{j = 1}^m \mathcal C_j := 
        \left\{\bigcap_{j = 1}^m C_j \ \ : \ \quad C_j \in \mathcal C_j \quad \forall \, j \in \{1, \ldots, m\}\right\} \setminus \{\emptyset\}, \label{LOR}
    \end{equation}
    which is a new collection of $X$. Also, if $\mathcal C$ and $\mathcal D$ are collections of $X$, we say that $\mathcal D$ is {\em finer} than $\mathcal C$, and write $\mathcal C \preccurlyeq \mathcal D$, if for every $D\in\mathcal D$ there exists $C\in\mathcal C$ such that $D\subset C$. With this, it is clear that for every pair of collections $\mathcal C$ and $\mathcal D$, $\mathcal C \preccurlyeq \mathcal C \preccurlyeq \mathcal C\vee\mathcal D \preccurlyeq \emptyset$.
    
    \begin{remark}\label{VARIOS}
        Here, we mention some important facts about the previous definitions:
        \begin{enumerate}
            \item[({\it a})] Suppose that $\mathcal C_1, \ldots, \mathcal C_m$ are collections of $X$ and $\mathcal D_1, \ldots, \mathcal D_m$ are pairwise non-empty sub-collections of the previous collections; that is, $\emptyset\neq \mathcal D_j\subset \mathcal C_j$ for every $j\in\{1, \ldots, m\}$. Then, $\mathcal D_1\vee\ldots\vee\mathcal D_m$ is a sub-collection of $\mathcal C_1\vee\ldots\vee\mathcal C_m$.
            \item[({\it b})] Any (finite) cover of $X$ is a (finite) cover of any subset $A\subset X$.
            \item[({\it c})] The finite $\vee$-product of covers of a subset $A\subset X$ is a cover of $A$ as well.
        \end{enumerate}
    \end{remark}
    Let $\mathcal C$ be a collection of $X$. Given $F\subset X$ and $n \geq 1$, we consider the new collections
    \[
        \mathcal C \setminus F := \big\{C\setminus F\ :\ \; C\in\mathcal C\big\}\setminus \{\emptyset\}, \qquad \text{and} \qquad \mathcal C^n_f := \bigvee_{j = 0}^{n - 1} f^{-j}\big(\mathcal C \setminus \Delta\big).
    \]
    When there is no place for confusion, we will often use the notation $\mathcal C^n:= \mathcal C^n_f$, omitting the name of the map. Note that for every $n \geq 0$, $\mathcal C^n = \mathcal C^n \setminus \Delta^n$. Moreover, if $\mathcal C$ and $\mathcal D$ are collections of subsets of $X$ such that $\mathcal C \preccurlyeq \mathcal D$, then $\mathcal C^n \preccurlyeq \mathcal D^n$ for every $n \geq 1$.
    
    \begin{lemma} \label{FINITECOVERS}
        Let $A$ be a subset of $X$. Then,
        \begin{enumerate}
            \item $A \subset f^{- j}(A)$ for all $j\geq0$ if and only if $A$ is $f$-invariant. Moreover, if $A$ is an invariant subset of $X$ such that $A\cap\widetilde X\neq\emptyset$ and $\mathcal C$ is a cover of $A$, then $\mathcal C^n$ is a cover of $A\cap\widetilde X$ for every $n \geq 1$.
            \item If $A$ is a non-empty pseudo-invariant subset of $X$ such that $A\cap\widetilde X\neq\emptyset$, then $A\cap\widetilde X \subset f^{-j}(A\setminus\Delta)$ for all $j\geq0$. Moreover, if $A$ is a pseudo-invariant subset of $X$ and $\mathcal C$ is a cover of $A$, then $\mathcal C^n$ is a cover of $A\cap\widetilde X$ for every $n\geq1$.
        \end{enumerate}
    \end{lemma}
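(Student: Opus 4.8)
The plan is to prove the four assertions in the stated order, exploiting that the two ``moreover'' covering statements both rest on a single observation: if $x \in A \cap \widetilde X$, then the entire forward orbit $\{f^j(x)\}_{j \geq 0}$ stays inside $A \setminus \Delta$. First I would dispatch the equivalence in item (1). The implication ``$f$-invariant $\Rightarrow A \subset f^{-j}(A)$ for all $j \geq 0$'' is an immediate induction on $j$ from $f(A) \subset A$, with $j = 0$ covered by the convention $f^0 = \id$; the converse is obtained by specializing the hypothesis to $j = 1$, which reads exactly $f(A) \subset A$.

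Next I would establish the orbit-confinement fact in both settings, which in the pseudo-invariant case is precisely the first assertion of item (2). In either case, $x \in \widetilde X$ forces $f^j(x) \notin \Delta$ for all $j \geq 0$, since $\widetilde X = \bigcap_{m \geq 0} f^{-m}(X \setminus \Delta)$. If $A$ is invariant, then trivially $f^j(x) \in A$ for all $j$, so $f^j(x) \in A \setminus \Delta$. If $A$ is pseudo-invariant, I would argue by induction: $f^0(x) = x \in A$, and whenever $f^i(x) \in A$, the fact that $f^i(x) \notin \Delta$ gives $f^i(x) \in A \setminus \Delta$, hence $f^{i+1}(x) = f(f^i(x)) \in f(A \setminus \Delta) \subset A$, the last inclusion being the fact recorded just after the definition of pseudo-invariance. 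Either way, $f^j(x) \in A \setminus \Delta$, i.e. $x \in f^{-j}(A \setminus \Delta)$, for every $j \geq 0$ (the case $j = 0$ also follows from $\widetilde X \subset X \setminus \Delta$).

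For both covering statements I would then run the same computation; it suffices to treat the case $A \cap \widetilde X \neq \emptyset$, there being nothing to cover otherwise. Fix $n \geq 1$ and $x \in A \cap \widetilde X$. For each $j \in \{0, \ldots, n - 1\}$, since $\mathcal C$ covers $A$ and $f^j(x) \in A$, choose $C_j \in \mathcal C$ with $f^j(x) \in C_j$; as $f^j(x) \notin \Delta$, the set $C_j \setminus \Delta$ is non-empty, hence $C_j \setminus \Delta \in \mathcal C \setminus \Delta$, and then $f^{-j}(C_j \setminus \Delta)$ is a non-empty member of $f^{-j}(\mathcal C \setminus \Delta)$ containing $x$. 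Therefore $x$ lies in the non-empty set $\bigcap_{j = 0}^{n - 1} f^{-j}(C_j \setminus \Delta)$, which is thus an element of $\mathcal C^n = \bigvee_{j = 0}^{n - 1} f^{-j}(\mathcal C \setminus \Delta)$. Since $x$ was arbitrary and $\mathcal C^n \neq \emptyset$, the collection $\mathcal C^n$ is a cover of $A \cap \widetilde X$.

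I expect the only real care to be needed in the bookkeeping with the ``$\setminus\{\emptyset\}$'' clauses built into the definitions of $\mathcal C \setminus \Delta$, of the preimage of a collection, and of the $\vee$-product: at each stage one must verify that the set just constructed contains the chosen point $x$ (so it is non-empty and survives as a genuine member of the relevant collection), and one needs $A \cap \widetilde X \neq \emptyset$ for the resulting family to qualify as a cover in the sense defined, rather than merely having the right union. Everything else is a direct unwinding of the definitions.
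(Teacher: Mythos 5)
Your proof is correct and follows essentially the same route as the paper's: the equivalence in item (1) is handled identically, and the covering claims rest on the same orbit-confinement observation. The one cosmetic difference is in the first assertion of item (2), where you run a direct element-wise induction along the orbit of a point in $A\cap\widetilde X$, whereas the paper argues at the set level via the chain $A\setminus\Delta^j \subset f^{-j}\big(f^j(A\setminus\Delta^j)\big)\subset f^{-j}(A\setminus\Delta^j)\subset f^{-j}(A\setminus\Delta)$ and then intersects with $\widetilde X$; both are equally valid and roughly the same length.
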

    
    \begin{proof}
        {\it (i)} If $A\subset f^{-j}(A)$ for all $j\geq 0$, then taking $j = 1$, we see that
        \[
            A\subset f^{-1}(A) \quad \Rightarrow \quad f(A) \subset f(f^{-1}(A))\subset A,
        \]
        which shows that $A$ is $f$-invariant. Reciprocally, let $j \geq 0$ and assume that $f(A) \subset A$. Then $f^{- j + 1}(A) \subset f^{-j}(f(A))\subset f^{-j}(A)$, which implies that
        \[
            A = f^0(A) \subset f^{-1}(A)\subset f^{-2}(A) \subset \ldots \subset f^{-j}(A).
        \]
        Now, if $A$ is $f$-invariant and $\mathcal C$ is a cover of $A$, then for every $j\geq 0$,
        \[
            A \cap \widetilde X \subset \left(\bigcup_{C\in\mathcal C} f^{-j}(C)\right) \cap \widetilde X \subset \bigcup_{C \in \mathcal C} f^{-j}(C \setminus \Delta),
        \]
        which proves that $f^{-j}(\mathcal C \setminus \Delta)$ is a cover of $A \cap \widetilde X$ for all $j \geq 0$. Thus, we deduce that $\mathcal C^n$ is a cover of $A \cap \widetilde X$ for every $n \geq 1$.\\[1ex]
        {\it (ii)} Let $j \geq 0$. Since $A \subset X$ is pseudo-invariant and $f^j$ is a continuous map on $A\setminus \Delta^j$, we deduce that $A\setminus \Delta^j$ is $f^j$-invariant and
        \begin{equation}
            A\setminus \Delta^j \subset f^{-j}\big(f^j\left(A \setminus \Delta^j\right)\big) \subset f^{-j}\left(A \setminus \Delta^j\right) \subset f^{-j}(A \setminus \Delta). \label{TECH.DETAIL}
        \end{equation}
        Thus, we deduce that
        \begin{equation}
            A \cap \widetilde X = \left(A \setminus \Delta^j\right) \cap \widetilde X \subset f^{-j}(A \setminus \Delta) \cap \widetilde X \subset f^{-j}(A \setminus \Delta). \label{PSE.INV1}
        \end{equation}
        Finally, note that if $\mathcal C$ is a cover of $A$, the last part of the proof follows from \eqref{PSE.INV1}, using an argument similar to the one used at the end of the proof of part (i).
    \end{proof}
    
    \begin{definition}
        Suppose that $\emptyset \neq A\subset X$ and let $\mathcal C$ be a cover of $A$. We define the {\em minimal cardinality of sub-covers of $A$ with respect to $\mathcal C$}, denoted by $\aleph_A(\mathcal C)$, as
        \[
            \aleph_A(\mathcal C):=\inf\!\left\{m\geq1\ :\ \exists\, \mathcal D \subset\mathcal C\text{ with $\#\mathcal D = m$ such that }A\subset\bigcup_{D\in\mathcal D} D\right\}.
        \]
    \end{definition}
    \noindent Note that $1 \leq \aleph_A(\mathcal C) \leq \#\mathcal C$, where $\#\mathcal C$ might be infinity. Some important properties of the minimal cardinality are established in the following result.
    
    \begin{lemma}\label{NPROPERTIES}
        Let $A$ be a subset of $X$ and let $\mathcal C, \mathcal D$ be covers of $A$ such that $\aleph_A(\mathcal C)$ and $\aleph_A(\mathcal D)$ are finite numbers. Then, the following properties hold:\\[-2ex]
        \begin{enumerate}
            \item[({\it a})] If $\mathcal C\preccurlyeq\mathcal D$, then $\aleph_A(\mathcal C)\leq\aleph_A(\mathcal D)$.
            \item[({\it b})] If $\mathcal C\subset\mathcal D$, then $\aleph_A(\mathcal C)\geq\aleph_A(\mathcal D)$.
            \item[({\it c})] $\aleph_A(\mathcal C\vee\mathcal D)\leq \aleph_A(\mathcal C)\cdot \aleph_A(\mathcal D)$.
            \item[({\it d})] If $A$ is $f$-invariant, then $\aleph_A(f^{-j}(\mathcal C)) \leq \aleph_A(\mathcal C)$ for all $j \geq 0$.
        \end{enumerate}
    \end{lemma}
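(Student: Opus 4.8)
The plan is to treat the four items independently, in each case starting from a sub-cover of $A$ that realises the relevant value of $\aleph_A$ and pushing it through the operation at hand, while checking three things: that the resulting family is still a sub-collection of the target cover, that it still covers $A$, and that its cardinality has not increased. The last point is where the definitions work in our favour: the $\vee$-product in \eqref{LOR} and the pre-image operation both discard empty sets and may collapse several distinct members into one, so both operations can only make a collection smaller --- exactly the direction every inequality in the statement needs.

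For item $(a)$, let $\mathcal D'\subset\mathcal D$ be a sub-cover of $A$ with $\#\mathcal D'=\aleph_A(\mathcal D)$. Using $\mathcal C\preccurlyeq\mathcal D$, choose for each $D\in\mathcal D'$ some $C_D\in\mathcal C$ with $D\subset C_D$; then $\{C_D:D\in\mathcal D'\}\subset\mathcal C$ still covers $A$ and has at most $\#\mathcal D'$ elements, so $\aleph_A(\mathcal C)\leq\aleph_A(\mathcal D)$. For item $(b)$, observe that since $\mathcal C\subset\mathcal D$, every sub-collection of $\mathcal C$ that covers $A$ is also a sub-collection of $\mathcal D$ that covers $A$; hence the set of admissible cardinalities in the infimum defining $\aleph_A(\mathcal D)$ contains the one defining $\aleph_A(\mathcal C)$, and thus $\aleph_A(\mathcal D)\leq\aleph_A(\mathcal C)$.

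For item $(c)$, pick minimal sub-covers $\mathcal C'\subset\mathcal C$ and $\mathcal D'\subset\mathcal D$ of $A$, so that $\#\mathcal C'=\aleph_A(\mathcal C)$ and $\#\mathcal D'=\aleph_A(\mathcal D)$. By item $(c)$ of Remark \ref{VARIOS}, $\mathcal C'\vee\mathcal D'$ is a cover of $A$; by item $(a)$ of Remark \ref{VARIOS}, it is a sub-collection of $\mathcal C\vee\mathcal D$; and from \eqref{LOR} its cardinality is at most $\#\mathcal C'\cdot\#\mathcal D'$. Combining these, $\aleph_A(\mathcal C\vee\mathcal D)\leq\aleph_A(\mathcal C)\,\aleph_A(\mathcal D)$. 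For item $(d)$, let $\mathcal C'\subset\mathcal C$ be a sub-cover of $A$ with $\#\mathcal C'=\aleph_A(\mathcal C)$. Since $A$ is $f$-invariant, Lemma \ref{FINITECOVERS} gives $A\subset f^{-j}(A)$, so applying $f^{-j}$ to the inclusion $A\subset\bigcup_{C\in\mathcal C'}C$ yields $A\subset f^{-j}(A)\subset\bigcup_{C\in\mathcal C'}f^{-j}(C)$; thus $f^{-j}(\mathcal C')\subset f^{-j}(\mathcal C)$ is a sub-cover of $A$ with at most $\#\mathcal C'$ elements, whence $\aleph_A\big(f^{-j}(\mathcal C)\big)\leq\aleph_A(\mathcal C)$.

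All four arguments are essentially bookkeeping, so I do not anticipate a real obstacle; the only place to be slightly careful is keeping track of cardinalities under the removal of empty sets and the possible identification of distinct intersections or pre-images in items $(c)$ and $(d)$, but since these effects can only decrease a collection's size the stated (non-strict) inequalities follow immediately.
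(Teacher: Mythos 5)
Your proof is correct and follows essentially the same route as the paper's: each item is handled by taking a minimal sub-cover realizing the relevant $\aleph_A$ and pushing it through the operation, using Remark \ref{VARIOS} for (c) and Lemma \ref{FINITECOVERS} for (d). The only minor divergence is in (b), where you argue directly from containment of the admissible sub-covers, whereas the paper notes $\mathcal C\subset\mathcal D\Rightarrow\mathcal D\preccurlyeq\mathcal C$ and applies (a); both are equally short and equivalent.
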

    \begin{proof}
        Define $m:=\aleph_A(\mathcal C)$, and $n:=\aleph_A(\mathcal D)$. Moreover, let $\mathcal M \subset \mathcal C, \, \mathcal N \subset \mathcal D$ be finite covers of $A$ such that $\# \mathcal M = m$ and $\# \mathcal N = n$. These definitions will be used throughout the whole proof.
        \\[1ex]
        \noindent{({\it a})} If $\mathcal C\preccurlyeq\mathcal D$, then for all $N\in \mathcal N$ there exists $M_{N}\in \mathcal C$ such that $N\subset M_N$. Thus, we deduce that
        \[
            A\subset \bigcup_{N\in\,\mathcal N} N \subset \bigcup_{N\in\,\mathcal N} M_N.
        \]
        Therefore, as $n = \#\mathcal N \geq \#\{M_N\ :\ N\in\mathcal N\}\geq m$, then we conclude ({\it a}).
        \\[1ex]
        \noindent ({\it b}) If $\mathcal C \subset \mathcal D$, then $\mathcal D \preccurlyeq \mathcal C$. Thus, the result follows from part ({\it a}).
        \\[1ex]
        \noindent{({\it c})} The collection $\mathcal M\vee\mathcal N\subset \mathcal C\vee\mathcal D$ is a finite cover of $A$ such that $\aleph_A(\mathcal C\vee\mathcal D)\leq \aleph_A(\mathcal M\vee\mathcal N)\leq mn$, which implies that $\aleph_A(\mathcal C\vee\mathcal D)\leq \aleph_A(\mathcal C)\cdot \aleph_A(\mathcal D)$, concluding the proof of part ({\it c}).
        \\[1ex]
        \noindent{({\it d})} Due to part (i) of Lemma \ref{FINITECOVERS}, since $A$ is $f$-invariant, for $j\geq 0$ we have that
        \[
            A\subset \bigcup_{M\in\,\mathcal M}M \qquad \Rightarrow\qquad A\subset f^{-j}(A)\subset \bigcup_{M\in\,\mathcal M} f^{-j}(M),
        \]
        which lets us deduce that $f^{- j}(\mathcal M)$ is a finite cover of $A$ with no more than $m$ elements. Then,
        \[
            \aleph_A(f^{-j}(\mathcal C))\leq \aleph_A(f^{-j}(\mathcal M))\leq m=\aleph_A(\mathcal M)=\aleph_A(\mathcal C),
        \]
        concluding the proof of part ({\it d}).
    \end{proof}
    
    \begin{remark}\label{REMARK.CAP}
        Under the same assumptions as in Lemma \ref{NPROPERTIES}, we have that
        \[
            A\subset \bigcup_{C \in \mathcal C} C \qquad \Leftrightarrow\qquad A = \bigcup_{C \in \mathcal C}(C \cap A).
        \]
        Thus, we deduce that $\aleph_{A}(\mathcal C \cap A) = \aleph_A(\mathcal C)$, where
        \[
            \mathcal C \cap A := \mathcal C \vee \{A\} = \big\{C \cap A\ :\ C\in \mathcal C\big\} \setminus \{\emptyset\}.
        \]
    \end{remark}
    
    \begin{remark}\label{LORC.NOTOPEN}
        Fix $n \geq 1$ and consider the same hypotheses as in Lemma \ref{NPROPERTIES}. Then, due to part {\it (d)} of said lemma, we can always find a sub-collection of $\mathcal C^n$ that is a finite cover of $A\cap\widetilde X$. Thus, $\aleph_{A \cap \widetilde X}(\mathcal C^n) < \infty$ for every integer $n \geq 1$.
    \end{remark}
    
    Let $A, B$ be non-empty subsets such that $A\subset B\subset X$ and let $\mathcal C$ be a cover of $A$. We say that $\mathcal C$ is a $B$-{\em open cover} of $A$ if every $C\in\mathcal C$ is an open set of $B$ (endowed with the subspace topology). To avoid redundancy, when $A = B$ we will say ``$A$-open cover'' when talking about an ``$A$-open cover of $A$''. Naturally, a {\em finite $B$-open cover of $A$} is a $B$-open cover of $A$ with a finite number of elements.

    
    Assume that $A\subset X$ is a (pseudo-)invariant set such that $A \cap \widetilde X \neq \emptyset$. Let $\mathcal C$ be an $A$-open cover. From Lemma \ref{FINITECOVERS}, we have that $\mathcal C^n$ is a cover of $A \cap \widetilde X$ for every $n \geq 1$. Now, we also prove that every element of $\mathcal C^n$ is $A$-open.
    
    \begin{lemma} \label{OPENCOVER}
        Let $A$ be a (pseudo-)invariant subset of $X$ such that $A \cap \widetilde X \neq \emptyset$ and let $\mathcal C$ be an $A$-open cover. Then, $\mathcal C^n$ is an $A$-open cover of $A \cap \widetilde X$ for every $n \geq 1$.
    \end{lemma}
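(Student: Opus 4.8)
The plan is to reduce everything to one containment statement together with a single set identity. By Lemma~\ref{FINITECOVERS}, $\mathcal C^n$ is already known to be a cover of $A\cap\widetilde X$, so the only point that remains is to check that each $E\in\mathcal C^n$ is open in $A$ for the subspace topology. Fix such an $E$ and write $E=\bigcap_{j=0}^{n-1}f^{-j}(C_j\setminus\Delta)$ with $C_j\in\mathcal C$ and $E\neq\emptyset$. First I would record two elementary containments: the $j=0$ factor is $C_0\setminus\Delta\subset C_0\subset A$, so $E\subset A$; and every $x\in E$ satisfies $f^j(x)\in C_j\setminus\Delta\subset X\setminus\Delta$ for all $j\in\{0,\dots,n-1\}$, hence $x\notin\Delta^n$, whence $E\subset A\cap W$ with $W:=X\setminus\Delta^n$. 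Since $f$ has finitely many critical points, $\Delta^n$ is finite, hence closed, so $W$ is open in $X$.

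The next ingredient is that the iterates $f^0,\dots,f^{n-1}$ are continuous on $W$. Indeed, every point of $X\setminus\Delta$ lies in the interior of some continuity piece $X_i$, so $f$ is continuous there, and a short induction on $j$ (using $f^{-(j-1)}(\Delta)\subset\Delta^n$ to guarantee $f^{j-1}(x)\notin\Delta$ whenever $x\in W$) yields continuity of $f^j$ at every point of the open set $W$. Consequently, for any $U$ open in $X$, the set $f^{-j}(U)\cap W$ is open in $X$. Writing $C_j=U_j\cap A$ with $U_j$ open in $X$ and putting $\widehat U_j:=U_j\cap(X\setminus\Delta)$ (open in $X$, and $\widehat U_j\cap A=C_j\setminus\Delta$), I would then form
\[
    V:=\bigcap_{j=0}^{n-1}\bigl(f^{-j}(\widehat U_j)\cap W\bigr),
\]
a finite intersection of sets open in $X$, hence itself open in $X$.

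The claim to establish is $E=V\cap A$, which immediately exhibits $E$ as an open subset of $A$ and thereby finishes the lemma. The inclusion $E\subset V\cap A$ is immediate from $E\subset W$, $E\subset f^{-j}(C_j\setminus\Delta)\subset f^{-j}(\widehat U_j)$, and $E\subset A$. For the reverse inclusion, take $x\in V\cap A$; then $x\in W$, so $x\in A\setminus\Delta^j$ for each $j\le n-1$. This is precisely where (pseudo-)invariance enters: if $A$ is $f$-invariant then $x\in A\subset f^{-j}(A)$ by Lemma~\ref{FINITECOVERS}{\it(i)}, and if $A$ is pseudo-invariant then $x\in A\setminus\Delta^j\subset f^{-j}(A\setminus\Delta)$ by the chain \eqref{PSE.INV1} in Lemma~\ref{FINITECOVERS}{\it(ii)}; either way $f^j(x)\in A$. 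Combined with $f^j(x)\in\widehat U_j$ (since $x\in V$), this gives $f^j(x)\in\widehat U_j\cap A=C_j\setminus\Delta$, that is, $x\in f^{-j}(C_j\setminus\Delta)$ for every $j$, so $x\in E$. As $E\in\mathcal C^n$ was arbitrary, $\mathcal C^n$ is an $A$-open cover of $A\cap\widetilde X$.

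The only genuine obstacle is that $f$ is discontinuous, so one cannot simply invoke ``the preimage of an open set is open''. The way around it is the observation that every member $E$ of $\mathcal C^n$ is automatically trapped inside $W=X\setminus\Delta^n$ — the region on which the first $n$ iterates are continuous — and inside $A$; once that is in hand, (pseudo-)invariance is exactly what is needed to rule out spurious points entering $V\cap A$. The finiteness of the critical set is used both to make $W$ open in $X$ and, implicitly via Lemma~\ref{FINITECOVERS}, to control the preimage constructions.
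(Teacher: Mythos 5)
Your argument is correct, but it takes a genuinely different route from the paper. The paper proceeds by induction on $n$: it factors $\mathcal C^{k+1}=\mathcal C^{k}\vee\bigl(f^{-k}(\mathcal C)\setminus\Delta^{k+1}\bigr)$ and uses that the elements of $\mathcal C^{k}$ already avoid $\Delta^{k}$ (the set removing which the first $k$ iterates are continuous), so that attaching the new factor preserves $A$-openness. You instead argue directly for a fixed $E\in\mathcal C^n$: you show $E$ is automatically trapped in $A\cap W$ with $W=X\setminus\Delta^n$, build an explicit $X$-open set $V$ by pulling $X$-open supersets of the $C_j\setminus\Delta$ back through the iterates of $f$ restricted to $W$, and then prove the set identity $E=V\cap A$, invoking (pseudo-)invariance for the reverse inclusion. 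Both arguments hinge on the same three facts---finiteness of $\Delta^n$, continuity of $f^j$ (for $j<n$) on $W$, and (pseudo-)invariance---but your version makes the invariance step fully explicit, which the paper's terse inductive step leaves implicit; the paper's version buys brevity and reuses the identity \eqref{CKV} rather than re-deriving continuity of $f^j$. One minor citation slip: the containment $A\setminus\Delta^{j}\subset f^{-j}(A\setminus\Delta)$ you invoke is the chain \eqref{TECH.DETAIL}, not \eqref{PSE.INV1} (the latter already carries an intersection with $\widetilde X$). In fact you need less than that chain delivers: you only use $f^{j}(x)\in A$, since the exclusion of $\Delta$ at stage $j$ is supplied separately by $f^{j}(x)\in\widehat U_j\subset X\setminus\Delta$; pointing this out would strengthen the write-up, as the middle inclusion in \eqref{TECH.DETAIL} (that $A\setminus\Delta^j$ is $f^j$-invariant) is itself a delicate claim that your argument can sidestep entirely.
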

    \begin{proof}
        We proceed by induction on $n$. If $n = 1$, the result follows. Next, assume that for $k\geq 1$, $\mathcal C^k$ is an $A$-open cover of $A\cap \widetilde X$. Note that
        \begin{align*}
            \mathcal C^k = \left(\bigvee_{j = 0}^{k - 1} f^{-j}(\mathcal C)\right)\setminus \Delta^k.
        \end{align*}
        With this, we deduce that
        \begin{align}
            \mathcal C^{k + 1} &= \mathcal C^k \vee \big(f^{-k}(\mathcal C) \setminus f^{-k}(\Delta)\big) = \mathcal C^k \vee \big(f^{-k}(\mathcal C) \setminus \Delta^{k + 1}\big). \label{CKV}
        \end{align}
        Since $\mathcal C^k$ is an $A$-open cover of $A\cap \widetilde X$, and $f^{-k}(\Delta) \subset \Delta^{k + 1}$, we have that $\mathcal C^{k + 1}$ is an $A$-open cover of $A\cap \widetilde X$, which ends the proof.
    \end{proof}
    
    \begin{remark}\label{COVERS.N}
        Let $\mathcal C,\mathcal D$ be covers of a set $A\subset X$. If $A\cap\widetilde X\neq\emptyset$ and
        \[
            \mathcal C \cap \widetilde X = \mathcal D \cap \widetilde X,
        \]
        then the least number of elements required to cover $A\cap \widetilde X$ with either $\mathcal C$ or $\mathcal D$ is the same. Specifically,
        \[
            \aleph_{A \cap \widetilde X}(\mathcal C) = \aleph_{A\cap\widetilde X}(\mathcal D),
        \]
        Note that both values could be infinite.
    \end{remark}
    
    
    \begin{lemma}\label{NPROPERTIES2}
        Let $A$ be a compact (pseudo-)invariant subset of $X$ such that $A \cap \widetilde X \neq \emptyset$, and let $\mathcal C$ be an $A$-open cover. Then, for every $j \geq 0$,
        \[
            \aleph_{A \cap \widetilde X} \big(f^{-j}\big(\mathcal C \setminus \Delta\big)\big) \leq \aleph_{A \cap \widetilde X}(\mathcal C) < \infty.
        \]
    \end{lemma}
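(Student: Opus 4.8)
The plan is to establish the two assertions of the statement in turn: first the finiteness $\aleph_{A\cap\widetilde X}(\mathcal C)<\infty$, and then the inequality, using the finiteness already obtained. For the finiteness, I would first note that $\mathcal C$ is in particular a cover of the nonempty set $A\cap\widetilde X$, so $\aleph_{A\cap\widetilde X}(\mathcal C)$ makes sense, and that any subcollection of $\mathcal C$ covering $A$ also covers $A\cap\widetilde X$; hence $\aleph_{A\cap\widetilde X}(\mathcal C)\le\aleph_A(\mathcal C)$. Since $A$ is compact and $\mathcal C$ is a cover of $A$ by sets that are open in the subspace topology of $A$, compactness yields a finite subcover, so $\aleph_A(\mathcal C)<\infty$, and therefore $\aleph_{A\cap\widetilde X}(\mathcal C)<\infty$.

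For the inequality I would first record that $A\cap\widetilde X$ is $f$-invariant in both the invariant and the pseudo-invariant case: since $\widetilde X\cap\Delta=\emptyset$ and $\widetilde X$ is $f$-invariant, for $x\in A\cap\widetilde X$ we have $x\in A\setminus\Delta$ and $f(x)\in\widetilde X$, and either $f(A)\subseteq A$ (invariant case) or $f(A\setminus\Delta)\subseteq A$ (pseudo-invariant case) gives $f(x)\in A$; iterating yields $f^{j}(A\cap\widetilde X)\subseteq A\cap\widetilde X$ for all $j\ge 0$. With this in hand, put $m:=\aleph_{A\cap\widetilde X}(\mathcal C)$ (finite by the first part) and pick $\mathcal M\subseteq\mathcal C$ with $\#\mathcal M=m$ covering $A\cap\widetilde X$. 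The collection $\mathcal M':=\{f^{-j}(M\setminus\Delta):M\in\mathcal M\}\setminus\{\emptyset\}$ is a subcollection of $f^{-j}(\mathcal C\setminus\Delta)$ with $\#\mathcal M'\le m$, and it covers $A\cap\widetilde X$: given $x\in A\cap\widetilde X$, invariance gives $f^{j}(x)\in A\cap\widetilde X\subseteq\bigcup_{M\in\mathcal M}M$, so $f^{j}(x)\in M$ for some $M\in\mathcal M$, and since $f^{j}(x)\in\widetilde X$ we have $f^{j}(x)\notin\Delta$, hence $f^{j}(x)\in M\setminus\Delta$, i.e. $x\in f^{-j}(M\setminus\Delta)\in\mathcal M'$. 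Thus $\aleph_{A\cap\widetilde X}\big(f^{-j}(\mathcal C\setminus\Delta)\big)\le\#\mathcal M'\le m=\aleph_{A\cap\widetilde X}(\mathcal C)$, as required.

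Alternatively, the inequality can be obtained without the explicit construction: since $A\cap\widetilde X$ is $f$-invariant and $\aleph_{A\cap\widetilde X}(\mathcal C)<\infty$, Lemma \ref{NPROPERTIES}(d) applied to the invariant set $A\cap\widetilde X$ gives $\aleph_{A\cap\widetilde X}\big(f^{-j}(\mathcal C)\big)\le\aleph_{A\cap\widetilde X}(\mathcal C)$; and since $\widetilde X\cap f^{-j}(\Delta)=\emptyset$ one has $f^{-j}(\mathcal C\setminus\Delta)\cap\widetilde X=f^{-j}(\mathcal C)\cap\widetilde X$, so Remark \ref{COVERS.N} yields $\aleph_{A\cap\widetilde X}\big(f^{-j}(\mathcal C\setminus\Delta)\big)=\aleph_{A\cap\widetilde X}\big(f^{-j}(\mathcal C)\big)$, and the bound follows. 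One still needs Lemma \ref{FINITECOVERS} (or its proof) to know that $f^{-j}(\mathcal C\setminus\Delta)$ is a cover of $A\cap\widetilde X$ in the first place.

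The lemma is short, and the only genuinely delicate point is checking that $A\cap\widetilde X$ is $f$-invariant uniformly across the invariant and pseudo-invariant cases; in the pseudo-invariant case this rests on $f(A\setminus\Delta)\subseteq A$ together with $\widetilde X\cap\Delta=\emptyset$, and I would make that step explicit. The remaining ingredients — extracting a finite subcover from compactness, and monotonicity of $\aleph$ under restriction to a subset — are routine.
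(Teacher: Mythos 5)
Your proof is correct and takes essentially the same route as the paper. The alternative argument at the end is precisely the paper's proof: compactness of $A$ yields a finite subcover, Lemma~\ref{FINITECOVERS} gives that $f^{-j}(\mathcal C\setminus\Delta)$ covers $A\cap\widetilde X$, and then part~(\emph{d}) of Lemma~\ref{NPROPERTIES} together with Remark~\ref{COVERS.N} close the argument. Your primary route merely unpacks those citations into an explicit construction of the subcover $\mathcal M'$, and you usefully make explicit a point the paper leaves tacit, namely that Lemma~\ref{NPROPERTIES}(\emph{d}) must be applied to the set $A\cap\widetilde X$, which is genuinely $f$-invariant in both the invariant and the pseudo-invariant case (thanks to $\widetilde X\cap\Delta=\emptyset$ and $f(A\setminus\Delta)\subset A$); this is the correct way to reconcile the pseudo-invariant hypothesis with a lemma stated only for invariant sets.
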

    \begin{proof}
        Fix $j \geq 0$. As $A$ is compact, then there exists a finite sub-collection of $\mathcal C$ which is an $A$-open cover. Therefore, from Lemma \ref{FINITECOVERS}, we have that $f^{- j}(\mathcal C \setminus \Delta)$ admits a finite sub-cover of $A \cap \widetilde X$ (not necessarily $A$-open). Thus, part ({\it d}) of Lemma \ref{NPROPERTIES} and Remark \ref{COVERS.N} imply what is required, concluding the proof.
    \end{proof}
    
    \begin{remark}\label{LORC}
        Fix $n \geq 1$ and consider the same hypotheses as in Lemma \ref{NPROPERTIES2}. Then, we can always find a sub-collection of $\mathcal C^n$ that is a finite $A$-open cover of $A\cap\widetilde X$ thanks to Lemma \ref{OPENCOVER}. Thus, $\aleph_{A \cap \widetilde X}(\mathcal C^n) < \infty$ for every integer $n \geq 1$.
    \end{remark}

\section{Definition of topological entropy}\label{sec4}
    In this section, we define the concept of topological entropy adapted to pc-maps. The following result shows that such concept is well-defined.
    
    \begin{lemma}\label{COVERENTROPY}
        Let $A$ be a compact (pseudo-)invariant subset of $X$ such that $A\cap\widetilde X\neq\emptyset$ and let $\mathcal C$ be an $A$-open cover. Then, the sequence $\{a_n\}_{n\geq1}\subset\mathbb R$ given by
        \[
            a_n := \log\aleph_{A \cap \widetilde X}(\mathcal C^n) \qquad \forall \, n \geq 1,
        \]
        is sub-additive; that is $a_{n + k} \leq a_n + a_k$ for all $n, k \geq 1$.
    \end{lemma}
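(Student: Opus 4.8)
The plan is to deduce the subadditivity of $\{a_n\}$ from the cocycle identity
\[
\mathcal C^{n + k} \;=\; \mathcal C^n \vee f^{-n}\big(\mathcal C^k\big) \qquad \text{for all } n, k \geq 1,
\]
combined with parts {\it (c)} and {\it (d)} of Lemma \ref{NPROPERTIES}. Recall first that, by Lemma \ref{OPENCOVER} and Remark \ref{LORC}, every $\mathcal C^m$ is an $A$-open cover of $A \cap \widetilde X$ with $\aleph_{A \cap \widetilde X}(\mathcal C^m)$ finite, so each $a_m$ is a well-defined non-negative real number; in particular $\{a_n\}_{n \geq 1} \subset \mathbb R$, as stated.

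First I would establish the identity. Splitting the $\vee$-product that defines $\mathcal C^{n+k}$ at the index $j = n$, and using that $f^{-n}$ commutes with intersections and maps $\emptyset$ to $\emptyset$, we get
\[
\mathcal C^{n + k} = \bigvee_{j = 0}^{n + k - 1} f^{-j}(\mathcal C \setminus \Delta) = \Big(\bigvee_{j = 0}^{n - 1} f^{-j}(\mathcal C \setminus \Delta)\Big) \vee f^{-n}\Big(\bigvee_{i = 0}^{k - 1} f^{-i}(\mathcal C \setminus \Delta)\Big) = \mathcal C^n \vee f^{-n}(\mathcal C^k).
\]
The only delicate point is the bookkeeping of the $\emptyset$-deletions: a typical element $M \cap f^{-n}(D)$ of the right-hand side (with $M \in \mathcal C^n$ and $D \in \mathcal C^k$) rewrites, after reindexing, as an $(n+k)$-fold intersection $\bigcap_{j = 0}^{n+k-1} f^{-j}(C_j \setminus \Delta)$, and it survives on the right exactly when this intersection is non-empty, which is precisely the condition under which it survives on the left; since intersection is associative and an empty partial intersection forces an empty total intersection, the two collections coincide. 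This purely set-theoretic verification is the step I expect to require the most care, even though it involves no estimates.

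Finally I would assemble the inequality. Since $\widetilde X$ is $f$-invariant and $f(A \setminus \Delta) \subset A$ when $A$ is (pseudo-)invariant, the set $A \cap \widetilde X$ is $f$-invariant; hence, by part {\it (d)} of Lemma \ref{NPROPERTIES} applied to the cover $\mathcal C^k$ of $A \cap \widetilde X$, the collection $f^{-n}(\mathcal C^k)$ is also a cover of $A \cap \widetilde X$ with $\aleph_{A \cap \widetilde X}(f^{-n}(\mathcal C^k)) \leq \aleph_{A \cap \widetilde X}(\mathcal C^k) < \infty$. Then the identity together with part {\it (c)} of Lemma \ref{NPROPERTIES} gives
\[
\aleph_{A \cap \widetilde X}(\mathcal C^{n+k}) = \aleph_{A \cap \widetilde X}\big(\mathcal C^n \vee f^{-n}(\mathcal C^k)\big) \leq \aleph_{A \cap \widetilde X}(\mathcal C^n) \cdot \aleph_{A \cap \widetilde X}(f^{-n}(\mathcal C^k)) \leq \aleph_{A \cap \widetilde X}(\mathcal C^n) \cdot \aleph_{A \cap \widetilde X}(\mathcal C^k),
\]
and taking logarithms yields $a_{n+k} \leq a_n + a_k$, which is the desired sub-additivity.
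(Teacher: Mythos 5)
Your proposal is correct and follows essentially the same route as the paper's own proof: both rest on the decomposition $\mathcal C^{n+k} = \mathcal C^n \vee \bigvee_{j=n}^{n+k-1} f^{-j}(\mathcal C\setminus\Delta) = \mathcal C^n \vee f^{-n}(\mathcal C^k)$, followed by parts ({\it c}) and ({\it d}) of Lemma \ref{NPROPERTIES}. The only difference is expository: you make the ``cocycle'' identity and the $f$-invariance of $A\cap\widetilde X$ explicit and verify the bookkeeping of $\emptyset$-deletions, whereas the paper states the decomposition without proof and cites Lemma \ref{NPROPERTIES} directly.
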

    \begin{proof}
        Fixing $n, k \geq 1$, from parts ({\it c})--({\it d}) of Lemma \ref{NPROPERTIES}, we have that
        \setlength \arraycolsep{2pt}
        \begin{eqnarray}
            a_{n + k} = \log \aleph_{A\cap\widetilde X}(\mathcal C^{n + k}) &=& \log \aleph_{A \cap \widetilde X}\!\!\left(\mathcal C^n \vee \bigvee_{j = n}^{n + k - 1} f^{-j}(\mathcal C \setminus \Delta)\right)
            \nonumber
            \\
            &\leq& \log \aleph_{A \cap \widetilde X}(\mathcal C^n) + \log\aleph_{A \cap \widetilde X}\!\!\left(\bigvee_{j = 0}^{k - 1} f^{-j}\big(\mathcal C \setminus \Delta\big)\!\right)\!. \label{INEQ1}
        \end{eqnarray}
        This implies that $a_{n + k}\leq a_n + a_k$, concluding the proof.
    \end{proof}
    
    \noindent Note that Lemma \ref{COVERENTROPY} guarantees that the limit
    \[
        \lim_{n \to \infty} \frac{\log\aleph_{A \cap \widetilde X}(\mathcal C^n)}{n}
    \]
    exists. Now, we proceed to define the concept of topological entropy for pc-maps with respect to an open cover.
    \begin{definition}
        Let $A$ be a compact (pseudo-)invariant subset of $X$ such that $A \cap \widetilde X \neq \emptyset$ and let $\mathcal C$ be a cover of $A \cap \widetilde X$ satisfying $\aleph_{A \cap \widetilde X}(\mathcal C) < \infty$. From Remark \ref{LORC.NOTOPEN}, we deduce that $\aleph_{\widetilde X}(\mathcal C^n) < \infty$ for every $n \geq 1$. Thus, we define the {\em topological entropy} of $f|_A$ with respect to $\mathcal C$ as
        \begin{equation}
            h_{top}(f|_A, \mathcal C) := \lim_{n \to \infty} \frac{\log \aleph_{A \cap \widetilde X}(\mathcal C^n)}{n} \geq 0. \label{HTOPC}
        \end{equation}
        In particular, note that $\aleph_{A \cap \widetilde X}(\mathcal C) < \infty$ when $\mathcal C$ is an $A$-open cover.
    \end{definition}
    
    \begin{lemma}\label{TOPOCOVERS}
        Let $A$ be a compact (pseudo-)invariant subset of $X$ such that $A \cap \widetilde X \neq \emptyset$. \\[-2ex]
        \begin{enumerate}
            \item[({\it a})] If $\mathcal C$, $\mathcal D$ are $A$-open covers such that $\mathcal C \preccurlyeq \mathcal D$, then $h_{top}(f|_{A}, \mathcal C) \leq h_{top}(f|_{A}, \mathcal D)$.
            \item[({\it b})] Let $B$ be a compact (pseudo-)invariant subset of $X$ such that $A \subset B$, and let $\mathcal C$ be a $B$-open cover. Then, $h_{top}(f|_A, \mathcal C \cap A) \leq h_{top}(f|_B, \mathcal C)$.
        \end{enumerate}
    \end{lemma}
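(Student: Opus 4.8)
\emph{Part (a).} The plan is to transport the refinement relation through the $\vee$-construction. First I would invoke the observation recorded just after the definition of $\mathcal C^n_f$: if $\mathcal C \preccurlyeq \mathcal D$, then $\mathcal C^n \preccurlyeq \mathcal D^n$ for every $n \ge 1$. Since $\mathcal C$ and $\mathcal D$ are $A$-open covers of the compact (pseudo-)invariant set $A$, Remark~\ref{LORC} guarantees that $\aleph_{A\cap\widetilde X}(\mathcal C^n)$ and $\aleph_{A\cap\widetilde X}(\mathcal D^n)$ are finite for every $n$, so part~(a) of Lemma~\ref{NPROPERTIES} applies and yields $\aleph_{A\cap\widetilde X}(\mathcal C^n) \le \aleph_{A\cap\widetilde X}(\mathcal D^n)$. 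Dividing the logarithms by $n$ and letting $n \to \infty$ (both limits exist by Lemma~\ref{COVERENTROPY}) gives $h_{top}(f|_A,\mathcal C) \le h_{top}(f|_A,\mathcal D)$. This part is essentially bookkeeping.

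\emph{Part (b).} First I would settle the setup: $\mathcal C \cap A = \mathcal C \vee \{A\}$ is an $A$-open cover (an open subset of $B$ meets $A$ in an open subset of $A$), so $h_{top}(f|_A,\mathcal C\cap A)$ is defined; moreover $\mathcal C$ covers $B \supseteq A$, hence by Lemma~\ref{FINITECOVERS} the collection $\mathcal C^n$ covers both $A\cap\widetilde X$ and $B\cap\widetilde X$, with $\aleph_{B\cap\widetilde X}(\mathcal C^n)<\infty$ by Remark~\ref{LORC}. The heart of the argument is the identity
\[
    \aleph_{A\cap\widetilde X}\big((\mathcal C\cap A)^n\big) = \aleph_{A\cap\widetilde X}(\mathcal C^n) \qquad \text{for every } n \ge 1 .
\]
To prove it I would expand a generic element of $(\mathcal C\cap A)^n$: with $C_0,\dots,C_{n-1}\in\mathcal C$ it has the form $\bigcap_{j=0}^{n-1} f^{-j}\big((C_j\setminus\Delta)\cap A\big) = \big(\bigcap_{j=0}^{n-1} f^{-j}(C_j\setminus\Delta)\big)\cap\big(\bigcap_{j=0}^{n-1} f^{-j}(A)\big)$. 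Intersecting with $A\cap\widetilde X$ and using Lemma~\ref{FINITECOVERS} — part~(i) in the invariant case, part~(ii) in the pseudo-invariant case — one has $A\cap\widetilde X\subseteq f^{-j}(A)$ for all $j\ge0$, so the second factor is redundant and $(\mathcal C\cap A)^n\cap(A\cap\widetilde X) = \mathcal C^n\cap(A\cap\widetilde X)$ as collections (discarding empty sets, which changes nothing). Applying Remark~\ref{REMARK.CAP} with covered set $A\cap\widetilde X$ to each of $(\mathcal C\cap A)^n$ and $\mathcal C^n$ then turns this into the displayed identity.

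Finally, since $A\cap\widetilde X\subseteq B\cap\widetilde X$ and $\mathcal C^n$ covers both, any finite subcollection of $\mathcal C^n$ covering $B\cap\widetilde X$ also covers $A\cap\widetilde X$, so $\aleph_{A\cap\widetilde X}(\mathcal C^n) \le \aleph_{B\cap\widetilde X}(\mathcal C^n)$. Combining the three facts,
\[
    h_{top}(f|_A,\mathcal C\cap A) = \lim_{n\to\infty}\frac{1}{n}\log\aleph_{A\cap\widetilde X}\big((\mathcal C\cap A)^n\big) = \lim_{n\to\infty}\frac{1}{n}\log\aleph_{A\cap\widetilde X}(\mathcal C^n) \le \lim_{n\to\infty}\frac{1}{n}\log\aleph_{B\cap\widetilde X}(\mathcal C^n) = h_{top}(f|_B,\mathcal C),
\]
which is part~(b).

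\emph{Main obstacle.} The delicate point is the identity $(\mathcal C\cap A)^n\cap(A\cap\widetilde X) = \mathcal C^n\cap(A\cap\widetilde X)$: one must argue carefully that the superfluous factors $f^{-j}(A)$ genuinely disappear after restriction to $A\cap\widetilde X$ — this is precisely where the (pseudo-)invariance of $A$ enters, via Lemma~\ref{FINITECOVERS} — and that the various $\setminus\{\emptyset\}$ truncations built into $f^{-j}(\cdot)$, $\vee$, and $\mathcal C\cap A$ do not affect the minimal cardinalities, for which Remarks~\ref{REMARK.CAP} and~\ref{COVERS.N} are the right tools. In the pseudo-invariant case the weaker inclusion $A\cap\widetilde X\subseteq f^{-j}(A\setminus\Delta)$ from Lemma~\ref{FINITECOVERS}(ii) is what makes the factors drop out.
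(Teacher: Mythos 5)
Your argument is correct and follows the same route as the paper: part~(a) is the observation $\mathcal C \preccurlyeq \mathcal D \Rightarrow \mathcal C^n \preccurlyeq \mathcal D^n$ fed into part~(a) of Lemma~\ref{NPROPERTIES}, and part~(b) reduces to the inequality $\aleph_{A\cap\widetilde X}\big((\mathcal C\cap A)^n\big)\le\aleph_{B\cap\widetilde X}(\mathcal C^n)$, which the paper states in one line and you justify by factoring it through the identity $\aleph_{A\cap\widetilde X}\big((\mathcal C\cap A)^n\big)=\aleph_{A\cap\widetilde X}(\mathcal C^n)$ (using (pseudo-)invariance to drop the superfluous $f^{-j}(A)$ factors over $A\cap\widetilde X$) and the trivial monotonicity $\aleph_{A\cap\widetilde X}(\mathcal C^n)\le\aleph_{B\cap\widetilde X}(\mathcal C^n)$. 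The extra detail is a faithful unpacking, not a different method.
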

    \begin{proof}
        \noindent({\it a}) This is a straightforward consequence of part ({\it a}) of Lemma \ref{NPROPERTIES}.\\[1ex]
        \noindent({\it b}) If $A \subset B$, then every $B$-open cover $\mathcal C$ defines an $A$-open cover $\mathcal C \cap A$. Therefore, it follows that $\aleph_{A \cap \widetilde X}\big((\mathcal C \cap A)^n\big)\leq \aleph_{B \cap \widetilde X}(\mathcal C^n)$ for all $n \geq 1$, which concludes the proof.
    \end{proof}
    
    Before proceeding to give a formal definition of entropy note that, by Remark \ref{REMARK.CAP}, if $\mathcal C$ is an $X$-open cover of $A$, then
    \[
        \aleph_{A \cap \widetilde X}(\mathcal C^n) = \aleph_{A \cap \widetilde X} \big((\mathcal C \cap A)^n\big) \qquad \forall \, n \geq 1.
    \]

    \vspace*{-2ex}
    \begin{definition}\label{ENTROPY.DEF}
        Let $A$ be a compact (pseudo-)invariant subset of $X$ such that $A \cap \widetilde X\neq\emptyset$. We define the {\em topological entropy} of $f|_A$ as
        \setlength \arraycolsep{2pt}
        \begin{eqnarray*}
            h_{top}(f|_A) := \sup\!\big\{h_{top}(f|_{A}, \mathcal C)\ :\ \mathcal C \text{ is an $A$-open cover}\big\} \geq 0.
        \end{eqnarray*}
    \end{definition}
    Note that, due to the compactness of $A$, one can take the supremum over all finite $A$-open covers in the previous definition. This follows from part ({\it a}) of Lemma \ref{TOPOCOVERS}.

    \begin{remark} \label{INDEP.HTOP}
        While $f$ is defined over the entire interval $X$, the definition of $h_{top}(f|_A)$ does not depend on how $f$ is defined over $A \cap \Delta$. That is, if $f$ and $g$ are pc-maps such that $\Delta_f = \Delta_g$ and $f = g$ on $X \setminus \Delta_f$, then $h_{top}(f|_A) = h_{top}(g|_A)$ for every compact (pseudo-)invariant subset $A\subset X$.
    \end{remark}

\section{Properties of the topological entropy} \label{sec6}
    In this section, we prove each part of Theorem \ref{ENTROPY}. First, note that its part (1) follows from the definition of entropy (it suffices to take $\Delta = \emptyset$).
    
    \begin{lemma}[{[Part (2) of Theorem \ref{ENTROPY}]}]\label{INVARIANTENTROPY}
        If $A\subset X$ is a compact and (pseudo-) invariant subset of $X$ such that $A\cap\widetilde X\neq\emptyset$, then $h_{top}(f|_{A})\leq h_{top}(f)$.
    \end{lemma}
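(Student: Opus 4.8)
The plan is to compare entropies with respect to open covers on the two spaces by restriction. Let $A \subset X$ be a compact (pseudo-)invariant subset with $A \cap \widetilde X \neq \emptyset$. By Definition \ref{ENTROPY.DEF}, it suffices to show that for every $A$-open cover $\mathcal D$ there is an $X$-open cover $\mathcal C$ of $X$ with $h_{top}(f|_A, \mathcal D) \leq h_{top}(f|_X, \mathcal C) \leq h_{top}(f)$; taking the supremum over all $\mathcal D$ then yields $h_{top}(f|_A) \leq h_{top}(f)$.

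First I would build $\mathcal C$ from $\mathcal D$. Since $\mathcal D$ is an $A$-open cover, each $D \in \mathcal D$ has the form $D = U_D \cap A$ for some open set $U_D$ of $X$; set $\mathcal C := \{U_D : D \in \mathcal D\} \cup \{X \setminus A\}$ (discarding $\emptyset$, and noting $X \setminus A$ is open since $A$ is compact, hence closed). Then $\mathcal C$ is an $X$-open cover of $X$, and by construction $\mathcal C \cap A = \mathcal D$ (the element $X \setminus A$ contributes nothing to $\mathcal C \cap A$, and each $U_D \cap A = D$). Now I would invoke part (b) of Lemma \ref{TOPOCOVERS} with $B = X$: since $A \subset X$ is compact (pseudo-)invariant and $\mathcal C$ is an $X$-open cover, we get $h_{top}(f|_A, \mathcal C \cap A) \leq h_{top}(f|_X, \mathcal C)$, i.e. $h_{top}(f|_A, \mathcal D) \leq h_{top}(f|_X, \mathcal C) \leq h_{top}(f)$, where the last inequality is just the definition of $h_{top}(f) = h_{top}(f|_X)$ as a supremum over $X$-open covers. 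Taking the supremum over all $A$-open covers $\mathcal D$ finishes the argument.

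The only genuinely delicate point is the very first step: verifying that every $A$-open cover $\mathcal D$ does arise as $\mathcal C \cap A$ for some $X$-open cover $\mathcal C$, and in particular that adjoining $X \setminus A$ does not spoil openness — this is where compactness of $A$ is used (to know $X \setminus A$ is open) and is the reason the hypothesis ``$A$ compact'' appears. Everything else is bookkeeping: the passage from covers of $A$ to covers of $A \cap \widetilde X$ via the $\mathcal C^n$ construction is already handled inside Lemma \ref{TOPOCOVERS}(b), and the identity $\aleph_{A\cap\widetilde X}(\mathcal C^n) = \aleph_{A\cap\widetilde X}((\mathcal C\cap A)^n)$ from Remark \ref{REMARK.CAP} is exactly what lets the restricted cover $\mathcal C \cap A = \mathcal D$ compute the same entropy as $\mathcal D$ itself. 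So I expect no real obstacle beyond writing the restriction argument cleanly.
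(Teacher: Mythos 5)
Your argument is correct and follows essentially the same route as the paper, which also reduces the inequality to part (b) of Lemma \ref{TOPOCOVERS}. The only difference is that you spell out explicitly what the paper leaves implicit: that every $A$-open cover $\mathcal D$ arises as $\mathcal C \cap A$ for some $X$-open cover $\mathcal C$, obtained by extending each element of $\mathcal D$ to an $X$-open set and adjoining $X \setminus A$ (which is $X$-open precisely because the compact set $A$ is closed in $X$), so that taking the supremum over $A$-open covers of $A$ is the same as taking it over restrictions $\mathcal C\cap A$ of $X$-open covers.
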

    \begin{proof}
        Let $\mathcal C$ be an $X$-open cover. Then, by part ({\it b}) of Lemma \ref{TOPOCOVERS} we know that $h_{top}(f|_{A}, \mathcal C \cap A)\leq h_{top}(f, \mathcal C)$, which implies that $h_{top}(f|_{A})\leq h_{top}(f)$.
    \end{proof}
    
    \begin{lemma}[{[Part (3) of Theorem \ref{ENTROPY}]}]\label{CONJ}
        Let $f: X \to X$ and $g: Y \to Y$ be pc-maps defined on compact intervals $X$ and $Y$, respectively. If $f$ is topologically semi-conjugate (respectively, conjugate) to $g$, then $h_{top}(f) \geq h_{top}(g)$ (respectively, $h_{top}(f) = h_{top}(g)$).
    \end{lemma}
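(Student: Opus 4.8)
The plan is to transport open covers from $Y$ back to $X$ along the semiconjugacy $\varphi$ and show that the cover-entropy does not decrease under this operation, then pass to the supremum; the conjugacy case follows by applying the semiconjugacy inequality in both directions. So let $\varphi\colon X\to Y$ be the continuous surjection with $\varphi^{-1}(\Delta_g)=\Delta_f$ and $\varphi\circ f=g\circ\varphi$ on $X\setminus\Delta_f$. First I would fix an arbitrary $Y$-open cover $\mathcal C$ of $Y$ and set $\varphi^{*}\mathcal C:=\{\varphi^{-1}(C):C\in\mathcal C\}\setminus\{\emptyset\}$; since $\varphi$ is continuous and surjective, $\varphi^{*}\mathcal C$ is an $X$-open cover. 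The key point will be the commutation identity
\begin{equation*}
    (\varphi^{*}\mathcal C)^{n}_{f}\ \text{ and }\ \varphi^{-1}\!\big(\mathcal C^{n}_{g}\big)\ \text{ agree on }\ \widetilde X_f,
\end{equation*}
which I would prove from $\varphi^{-1}(\Delta_g)=\Delta_f$ (so $\varphi^{-1}(\Delta_g^{\,k})=\Delta_f^{\,k}$, using that $\varphi\circ f=g\circ\varphi$ off the discontinuity sets) together with $f^{-j}\varphi^{-1}=\varphi^{-1}g^{-j}$ on the relevant sets. Combined with $\varphi(\widetilde X_f)=\widetilde Y_g$ from Remark \ref{REMARK.CONJUGATE}, this gives, for every $n\ge 1$,
\begin{equation*}
    \aleph_{\widetilde X_f}\!\big((\varphi^{*}\mathcal C)^{n}_{f}\big)\ \ge\ \aleph_{\widetilde Y_g}\!\big(\mathcal C^{n}_{g}\big),
\end{equation*}
because a sub-cover of $\widetilde X_f$ drawn from $(\varphi^{*}\mathcal C)^n_f$ pushes forward under $\varphi$ to a sub-cover of $\widetilde Y_g$ of no larger cardinality (surjectivity of $\varphi$ on the relevant sets guarantees the push-forward is still a cover; the map $\varphi^{-1}(C)\mapsto C$ on collections can only decrease cardinality). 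Dividing by $n$ and letting $n\to\infty$ yields $h_{top}(f,\varphi^{*}\mathcal C)\ge h_{top}(g,\mathcal C)$.

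Taking the supremum over all $Y$-open covers $\mathcal C$ on the right-hand side, and noting that each $\varphi^{*}\mathcal C$ is a legitimate $X$-open cover so that the left-hand sides are all $\le h_{top}(f)$, we conclude $h_{top}(f)\ge\sup_{\mathcal C}h_{top}(g,\mathcal C)=h_{top}(g)$. For the conjugacy statement, $\varphi$ is a homeomorphism, so $\varphi^{-1}\colon Y\to X$ is also a homeomorphism realizing a topological conjugacy of $g$ to $f$ (one checks $\varphi(\Delta_f)=\Delta_g$ from $\varphi^{-1}(\Delta_g)=\Delta_f$ and bijectivity, so the defining identities hold with the roles of $f$ and $g$ swapped); applying the semiconjugacy inequality in the other direction gives $h_{top}(g)\ge h_{top}(f)$, hence equality.

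The main obstacle I anticipate is the bookkeeping around the discontinuity sets: the cover operation $\mathcal C\mapsto\mathcal C^n_f$ involves deleting $\Delta_f^{\,n}$ at each stage, and one must check carefully that $\varphi^{-1}$ intertwines the deletion of $\Delta_g^{\,n}$ with the deletion of $\Delta_f^{\,n}$, which is exactly where the hypothesis $\varphi^{-1}(\Delta_g)=\Delta_f$ (rather than a mere inclusion) is used, and that the intertwining $\varphi\circ f^j=g^j\circ\varphi$ holding only on $X\setminus\Delta_f^{\,j}$ is enough because everything is ultimately restricted to $\widetilde X_f$, where no orbit ever meets $\Delta_f$. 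A secondary subtlety is verifying that the push-forward under $\varphi$ of a finite $\widetilde X_f$-sub-cover is genuinely a cover of $\widetilde Y_g$: this needs $\varphi(\widetilde X_f)=\widetilde Y_g$ together with $\varphi\big(\varphi^{-1}(E)\cap\widetilde X_f\big)=E\cap\widetilde Y_g$ for the relevant sets $E$, which again follows from surjectivity of $\varphi$ restricted appropriately. Once these identifications are in place the cardinality estimates are routine, and the rest is a limit computation identical to the classical AKM argument.
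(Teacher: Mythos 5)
Your proposal follows essentially the same route as the paper: pull back a $Y$-open cover $\mathcal B$ along $\varphi$, relate $(\varphi^{-1}\mathcal B)^n_f$ to $\varphi^{-1}(\mathcal B^n_g)$ using $\varphi^{-1}(\Delta_g)=\Delta_f$ and the intertwining $\varphi\circ f=g\circ\varphi$ off $\Delta_f$, and deduce $\aleph_{\widetilde Y_g}(\mathcal B^n_g)\le\aleph_{\widetilde X_f}\big((\varphi^{-1}\mathcal B)^n_f\big)$ before passing to the limit and supremum, with the conjugacy case handled by symmetry. The paper packages the comparison as the refinement $\varphi^{-1}(\mathcal B^n_g)\preccurlyeq(\varphi^{-1}\mathcal B)^n_f$ (a one-sided inclusion) plus the equality $\aleph_{\widetilde Y_g}(\mathcal B^n_g)=\aleph_{\widetilde X_f}(\varphi^{-1}(\mathcal B^n_g))$, whereas you assert agreement of the two collections on $\widetilde X_f$ and push finite subcovers forward directly via $\varphi(\widetilde X_f)=\widetilde Y_g$; this is a correct and mildly more hands-on variant of the same mechanism, and the supporting claim does hold since for $x\in\widetilde X_f$ one has $\varphi(f^j(x))=g^j(\varphi(x))\notin\Delta_g$ for all $j$, so both membership conditions reduce to $\varphi(f^j(x))\in B_j$.
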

    
    \begin{proof}
        Let $n \geq 1$ be an integer and assume that $f$ is topologically semi-conjugate to $g$. Also, let $\varphi: X \to Y$ be a continuous surjection such that $\Delta_f =\varphi^{- 1}(\Delta_g)$ and $\varphi \circ f = g \circ \varphi$ on $X \setminus \Delta_f$. If $\mathcal B$ is a $Y$-open cover, we have that $\varphi^{-1}(\mathcal B)$ is an $X$-open cover. Moreover, for every collection $B_0, \ldots, B_{n - 1} \in \mathcal B$, we have that
        \[
            \bigcap_{j = 0}^{n - 1} f^{-j}\!\left(\varphi^{-1}(B_j) \setminus \Delta_f\right) \subset
            \varphi^{-1} \!\! \left(\bigcap_{j = 0}^{n - 1} g^{-j}(B_j \setminus \Delta_g)\!\right).
        \]
        This implies that $\varphi^{-1}(\mathcal B^n_g) \preccurlyeq \left(\varphi^{-1}(\mathcal B)\right)^n_f$, where both are $X$-open covers of $\widetilde X$ having a sub-collection that is a finite $X$-open cover of $\widetilde X$. Then, by part ({\it a}) of Lemma \ref{NPROPERTIES} we deduce that
        \begin{equation}
            \aleph_{\widetilde X} \! \left(\varphi^{-1}(\mathcal B^n_g)\right) \leq \aleph_{\widetilde X} \! \left(\left(\varphi^{-1}(\mathcal B)\right)^n_f\right)\!. \label{ALEPH1}
        \end{equation}
        Next, as $\varphi$ is a continuous surjection, we have that a $Y$-open sub-collection of $\mathcal B_g^n$ covers $\widetilde Y:= \widetilde Y_g$ if and only if its preimage with respect to $\varphi$ is an $X$-open cover of $\widetilde X$. Thus,
        \[
            \aleph_{\widetilde Y}(\mathcal B^n_g) = \aleph_{\widetilde X} \! \left(\varphi^{-1}(\mathcal B^n_g)\right).
        \]
        Next, since $n \geq 1$ is arbitrary, by \eqref{ALEPH1} we have that
        \[
            \aleph_{\widetilde Y}(\mathcal B^n_g) \leq \aleph_{\widetilde X} \! \left(\left(\varphi^{-1}(\mathcal B)\right)^n_f\right) \quad \forall \, n \geq 1,
        \]
        which implies $h_{top}(g, \mathcal B)\leq h_{top}\!\left(f, \varphi^{-1}(\mathcal B)\right)$. Finally, as $\mathcal B$ is an arbitrary $Y$-open cover, we conclude that
        \begin{eqnarray*}
            h_{top}(g) \leq \sup\!\left\{h_{top}\!\left(f, \varphi^{-1}(\mathcal B)\right)\ :\ \text{$\mathcal B$ is a $Y$-open cover}\right\} \leq h_{top}(f).
        \end{eqnarray*}
        Now, if in addition $f$ and $g$ are topologically conjugate, we also have that $g$ is topologically semi-conjugate to $f$. Therefore, the inequality $h_{top}(f) \leq h_{top}(g)$ also holds.
    \end{proof}

    \begin{lemma}[{[Part (4) of Theorem \ref{ENTROPY}]}]\label{PROP.ENTROPY.4}
         $h_{top}(f^k) = k \, h_{top}(f)$ for every integer $k \geq 0$.
    \end{lemma}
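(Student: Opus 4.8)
Here is a plan for proving $h_{top}(f^k)=k\,h_{top}(f)$ for every integer $k\ge 0$.

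The plan is to prove the two inequalities $h_{top}(f^k)\le k\,h_{top}(f)$ and $h_{top}(f^k)\ge k\,h_{top}(f)$ separately, the case $k=0$ being handled directly: since $f^0=\id$ has empty boundary set and $\mathcal C^n_{\id}=\mathcal C\vee\cdots\vee\mathcal C$ has the same minimal cardinality as $\mathcal C$ (a minimal subcover $\mathcal D\subset\mathcal C$ satisfies $\mathcal D\subset\mathcal D\vee\cdots\vee\mathcal D\subset\mathcal C^n_{\id}$), we get $h_{top}(\id,\mathcal C)=0$ for every cover, hence $h_{top}(f^0)=0$. For $k\ge1$ the backbone is a pair of elementary facts valid for any pc-map $g$, any $X$-open cover $\mathcal C$ of $X$, and all $m,n\ge1$, which I would record first. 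Using $\Delta_{g^m}=\Delta^m_g$, the equality $\mathcal C^m_g=\mathcal C^m_g\setminus\Delta^m_g$, and that preimages commute with intersections, both $(\mathcal C^m_g)^n_{g^m}$ and $\mathcal C^{nm}_g$ equal $\bigvee_{j=0}^{n-1}g^{-jm}\big(\bigvee_{\ell=0}^{m-1}g^{-\ell}(\mathcal C\setminus\Delta_g)\big)$, so $(\mathcal C^m_g)^n_{g^m}=\mathcal C^{nm}_g$; and, checking coordinate by coordinate (with $\Delta^m_g=\bigcup_{\ell<m}g^{-\ell}(\Delta_g)$ used to absorb the superfluous deletions), one gets $\mathcal C^n_{g^m}\preccurlyeq\mathcal C^{nm}_g$. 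Throughout, $\widetilde X_g=\widetilde X_{g^m}$ by Lemma~\ref{Xtilde.F}, and $(f^k)^\ell$ and $f^{k\ell}$ are the same pc-map (same boundary set, by iterating $\Delta_{f^k}=\Delta^k_f$, and they agree off it), so their entropies coincide by Remark~\ref{INDEP.HTOP}.

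\emph{Easy inequality.} From $\mathcal C^n_{g^m}\preccurlyeq\mathcal C^{nm}_g$ and Lemma~\ref{NPROPERTIES}(a), $\aleph_{\widetilde X}(\mathcal C^n_{g^m})\le\aleph_{\widetilde X}(\mathcal C^{nm}_g)$, hence $h_{top}(g^m,\mathcal C)\le\lim_n\tfrac1n\log\aleph_{\widetilde X}(\mathcal C^{nm}_g)=m\,h_{top}(g,\mathcal C)$ (evaluating the existing limit along the subsequence $\{nm\}$); taking suprema over $X$-open covers gives $h_{top}(g^m)\le m\,h_{top}(g)$. Specialising yields $h_{top}(f^k)\le k\,h_{top}(f)$ and, applied to $g=f^k$, also $h_{top}(f^{k\ell})\le\ell\,h_{top}(f^k)$ for every $\ell\ge1$.

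\emph{Reverse inequality.} This is the only genuinely non-classical step. The natural candidate $\mathcal C^m_f$ (for which $h_{top}(f^m,\mathcal C^m_f)=m\,h_{top}(f,\mathcal C)$ by the identity above) is \emph{not} admissible, since it covers only $X\setminus\Delta^m_f$, not $X$. I would repair this as follows. It suffices to take suprema over $X$-open covers $\mathcal C$ of $X$ by open intervals, since every $X$-open cover is refined by one of this kind (split each set into connected components) and $h_{top}(f,\cdot)$ is monotone under refinement (Lemma~\ref{TOPOCOVERS}(a)). For such $\mathcal C$ the elements of $\mathcal C^m_f$ are finite unions of intervals (preimages of intervals under the piecewise-monotone maps $f^j$, intersected, with $\Delta^m_f$ removed), so, since $\mathcal C^m_f$ covers $X\setminus\Delta^m_f$, for each $c\in\Delta^m_f$ there are $D_c^{-},D_c^{+}\in\mathcal C^m_f$ each containing a one-sided neighbourhood of $c$. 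Picking $\delta_c>0$ with $(c-\delta_c,c+\delta_c)\cap\Delta^m_f=\{c\}$ and $(c-\delta_c,c+\delta_c)\setminus\{c\}\subset D_c^{-}\cup D_c^{+}$, the collection $\mathcal B:=\mathcal C^m_f\cup\{(c-\delta_c,c+\delta_c):c\in\Delta^m_f\}$ is an honest $X$-open cover of $X$. Now take a minimal subcover of $\widetilde X$ from $\mathcal B^n_{f^m}$, with $M=\aleph_{\widetilde X}(\mathcal B^n_{f^m})$ members; each member is an intersection of $n$ factors, each lying in $f^{-jm}(\mathcal C^m_f)$ or contained in $f^{-jm}(D_c^{-})\cup f^{-jm}(D_c^{+})$ for some $c$, so distributing the intersection over these binary unions exhibits the member as a union of at most $2^{n}$ sets, each contained in an element of $\bigvee_{j<n}f^{-jm}(\mathcal C^m_f)=(\mathcal C^m_f)^n_{f^m}=\mathcal C^{nm}_f$. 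Hence $\aleph_{\widetilde X}(\mathcal C^{nm}_f)\le 2^{n}M$, and dividing by $n$ and letting $n\to\infty$ gives $m\,h_{top}(f,\mathcal C)\le\log 2+h_{top}(f^m,\mathcal B)\le\log 2+h_{top}(f^m)$; the supremum over interval covers yields the uniform bound $m\,h_{top}(f)\le\log 2+h_{top}(f^m)$.

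\emph{Conclusion and the main difficulty.} Applying this bound with $m=k\ell$ and combining with $h_{top}(f^{k\ell})\le\ell\,h_{top}(f^k)$ from the easy inequality gives $k\ell\,h_{top}(f)-\log 2\le\ell\,h_{top}(f^k)$, i.e. $h_{top}(f^k)\ge k\,h_{top}(f)-\tfrac{\log 2}{\ell}$; letting $\ell\to\infty$ produces $h_{top}(f^k)\ge k\,h_{top}(f)$, and together with the easy inequality this finishes the proof (the same chain covers $h_{top}(f)=\infty$, and $k=0$ was done above). The single delicate point is exactly the reverse inequality: passing from the dynamically natural but inadmissible refinement $\mathcal C^m_f$ to a genuine cover of $X$ costs a multiplicative $2^{n}$ at stage $n$ — forced by the finitely many jump/critical points of $f^m$, each added punctured interval needing two pieces of $\mathcal C^m_f$ to recover — which contaminates the entropy by an additive $\log 2$ that is then removed by the self-improvement trick $h_{top}(f^k)=\tfrac1\ell h_{top}(f^{k\ell})+o(1)$. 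Verifying the interval structure of the elements of $\mathcal C^m_f$ and the $2^n$ counting is the technical heart of the argument.
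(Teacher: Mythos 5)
Your proof is correct and reaches the same conclusion, but the reverse inequality $h_{top}(f^k)\ge k\,h_{top}(f)$ is handled by a genuinely different route than the paper's own proof of Lemma~\ref{PROP.ENTROPY.4}. Both arguments prove the easy inequality from $\mathcal C^n_{f^k}\preccurlyeq\mathcal C^{nk}_f$ in essentially the same way, and both run into the same obstruction for the reverse direction: the dynamically natural refinement $\mathcal B:=\mathcal C^k_f$, which satisfies $h_{top}(f^k,\mathcal B)=k\,h_{top}(f,\mathcal C)$, covers only $\widetilde X$ and not $X$, and must be patched to a genuine $X$-open cover $\mathcal B(\epsilon)$ by adding small balls around the finite set $\Delta_f^k$. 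The paper then asserts, invoking Lemma~\ref{Xtilde.F} and part~(a) of Lemma~\ref{TOPOCOVERS}, that $\sup_{\epsilon>0} h_{top}(f^k,\mathcal B(\epsilon)) = h_{top}(f^k,\mathcal B)$, and concludes directly. You instead quantify the cost of the added balls: each punctured ball lies in a union of two elements of $\mathcal C^k_f$, so replacing $\mathcal B(\epsilon)$ by $\mathcal C^{nk}_f$ in the $n$-fold $\vee$-product costs a multiplicative $2^n$, yielding only the weaker bound $k\,h_{top}(f,\mathcal C)\le \log 2 + h_{top}(f^k)$, and you then remove the $\log 2$ by the self-improvement trick (replace $k$ by $k\ell$, combine with the easy inequality $h_{top}(f^{k\ell})\le \ell\, h_{top}(f^k)$, and let $\ell\to\infty$). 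Your route is longer, needs the reduction to interval covers, and leans on the fact that elements of $\mathcal C^k_f$ are finite unions of intervals (which uses the piecewise-monotone structure of $f$); but it is more elementary and, arguably, more careful at the delicate point: the cited monotonicity of $h_{top}(f^k,\cdot)$ under refinement only gives $\sup_{\epsilon>0} h_{top}(f^k,\mathcal B(\epsilon)) \le h_{top}(f^k,\mathcal B)$ for free, while the reverse inequality---the one the paper actually needs---requires an extra argument, and your $2^n$-count plus self-improvement is precisely a device for sidestepping it.
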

    \begin{proof}
        First, if $k = 0$, the result follows as $h_{top}(\id) = 0$. Now, fix $k \geq 1$. Let $\mathcal C$ be an $X$-open cover. Therefore, for all $n \geq 1$,
        \begin{align*}
            \mathcal C_{f^k}^{n} &\preccurlyeq \mathcal C_{f^k}^n \vee \left[\bigvee_{\begin{subarray}{c} \scriptscriptstyle 1 \leq j \leq n k - 1  \\ \scriptscriptstyle k \, \nmid \, j \end{subarray}} \!f^{-j}(\mathcal C \setminus \Delta_f)\right] = \mathcal C_f^{n k}
        \end{align*}
        where $k \nmid j$ means that $k$ does not divide $j$. Now, from part ({\it a}) of Lemma \ref{NPROPERTIES} we deduce that
        \setlength \arraycolsep{2pt}
        \begin{eqnarray*}
            \frac{1}{k} \lim_{n \to \infty} \frac{\log \aleph_{\widetilde X} \! \big(\mathcal C_{f^k}^{n}\big)}{n} \leq \lim_{n \to \infty} \frac{\log \aleph_{\widetilde X}\!\! \, \big(\mathcal C_f^{n k}\big)}{n k} &=& h_{top}(f, \mathcal C) \leq h_{top}(f).
        \end{eqnarray*}
        Thus, we conclude that
        \[
            \frac{h_{top}(f^k, \mathcal C)}{k} \leq h_{top}(f).
        \]
        Thus, as $\mathcal C$ is an arbitrary $X$-open cover, then $h_{top}(f^k) \leq k \, h_{top}(f)$.\\[-2ex]

        Reciprocally, if $k \geq 1$ and $\mathcal C$ is an $X$-open cover, from Lemma \ref{OPENCOVER} and Remark \ref{LORC.NOTOPEN} we deduce that
        \[
            \mathcal B:= \bigvee_{i = 0}^{k - 1} f^{- i}(\mathcal C \setminus \Delta_f) = \left(\bigvee_{i = 0}^{k - 1} f^{- i}(\mathcal C)\right)\setminus\Delta^k_f
        \]
        is an $X$-open cover of $\widetilde X$ satisfying $\aleph_{\widetilde X}(\mathcal B) < \infty$. Note that the only points in $X$ that $\mathcal B$ does not cover are those given by $\Delta_f^k$. To construct an $X$-open cover, it suffices to add $\mathcal B$ a finite number of open intervals:
        \[
            \mathcal B(\epsilon):= \mathcal B \cup \big\{(y - \epsilon, y + \epsilon) \cap X \ : \ y \in \Delta^k_f\big\} \qquad \forall \, \epsilon > 0.
        \]        
        From the compactness of $X$, $\aleph_{\widetilde X}\big(\mathcal B(\epsilon)\big) < \infty$ for every $\epsilon\geq0$ (where $\mathcal B(0) := \mathcal B$). Also, note that $\mathcal B(\epsilon) \preccurlyeq \mathcal B(\eta)$ whenever $\epsilon \geq \eta$. Then, from Lemma \ref{Xtilde.F} and part (a) of Lemma \ref{TOPOCOVERS}, we have
        \begin{align}
            h_{top}(f^k) \geq \sup_{\epsilon>0} \big\{h_{top}\big(f^k, \mathcal B(\epsilon)\big)\big\} &= h_{top}(f^k, \mathcal B) = \lim_{n \to \infty} \frac{\log \aleph_{\widetilde X}\big(\mathcal B^n_{f^k}\big)}{n} \nonumber
            \\
            &\qquad = \lim_{n \to \infty} \frac{1}{n} \log \aleph_{\widetilde X} \!\! \left(\bigvee \limits_{j = 0}^{n - 1}f^{- j k}\big(\mathcal B \setminus \Delta_{f^k}\big)\right) \nonumber
            \\
            &\qquad = \lim_{n \to \infty} \frac{1}{n} \log \aleph_{\widetilde X} \!\! \left(\left[\bigvee \limits_{j = 0}^{n - 1} f^{- j k}(\mathcal B)\right] \setminus \big(\Delta_{f^k}\big)^{\!n}_{\!}\right) \!\!. \label{VIC.1}
        \end{align}
        Now, it is not difficult to see that:
        \begin{equation}
            \bigvee_{j = 0}^{n - 1} f^{- j k}(\mathcal B) = \left(\bigvee_{\ell = 0}^{n k - 1} f^{- \ell}(\mathcal C)\right)\setminus\Delta_f^{n k} \qquad \text{and} \qquad \big(\Delta_{f^k}\big)^{\!n}_{\!} = \Delta_f^{n k}. \label{VIC.2}
        \end{equation}
        Thus, from \eqref{VIC.1} and \eqref{VIC.2}, we deduce that
        \[
            h_{top}(f^k) \geq k \cdot \lim_{n \to \infty} \frac{1}{n k} \log \aleph_{\widetilde X} \!\! \left(\left[\bigvee_{\ell = 0}^{n k - 1}f^{- \ell}(\mathcal C)\right] \setminus \Delta_f^{n k}\right) = k \cdot h_{top}(f),
        \]
        concluding the proof.
    \end{proof}

    To prove the following main result, it will be necessary to make use of the following elementary lemma.
    
    \begin{lemma}\label{ELEMENTARY}
        Let $p \geq 1$. Suppose that for every $i\in \{1, \ldots, p\}$, $\{a_{i, n}\}_{n \geq 1}$ is a sequence of real numbers greater than or equal to 1 such that 
        \[
            \lim_{n \to \infty} \frac{\log a_{i, n}}{n}
        \]
        exists and is equal to $a_i \in \mathbb R$. Then,
        \[
            \lim_{n \to \infty} \frac{1}{n} \log \! \left(\sum_{i = 1}^p a_{i, n}\right) = \max_{1 \leq i \leq p}\{a_i\}.
        \]
    \end{lemma}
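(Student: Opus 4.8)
The plan is to sandwich the sum between its largest term and $p$ times its largest term, and then push $\tfrac1n\log$ past the maximum; the statement is a finite ``$\max$'' version of the log-sum-exp heuristic and requires only elementary estimates.

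First I would set $M := \max_{1\le i\le p} a_i$ and record the two-sided bound
\[
  \max_{1\le i\le p} a_{i,n} \;\le\; \sum_{i=1}^p a_{i,n} \;\le\; p \cdot \max_{1\le i\le p} a_{i,n},
\]
valid for every $n\ge 1$; the hypothesis $a_{i,n}\ge 1$ guarantees that all these quantities and their logarithms are well-defined and nonnegative. Applying the increasing map $t\mapsto \tfrac1n\log t$ to each side gives
\[
  \frac{1}{n}\log\Big(\max_{1\le i\le p} a_{i,n}\Big)
  \;\le\; \frac{1}{n}\log\Big(\sum_{i=1}^p a_{i,n}\Big)
  \;\le\; \frac{\log p}{n} + \frac{1}{n}\log\Big(\max_{1\le i\le p} a_{i,n}\Big).
\]

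Next I would observe that, since $\log$ is increasing,
\[
  \frac{1}{n}\log\Big(\max_{1\le i\le p} a_{i,n}\Big) = \max_{1\le i\le p}\frac{\log a_{i,n}}{n},
\]
and that the maximum of the finitely many sequences $\{\tfrac{\log a_{i,n}}{n}\}_{n\ge1}$, which converge to $a_1,\dots,a_p$ respectively, converges to $\max_{1\le i\le p} a_i = M$. Consequently both the leftmost and the rightmost expressions in the previous sandwich tend to $M$ as $n\to\infty$ (using $\tfrac{\log p}{n}\to 0$), and the squeeze theorem yields $\tfrac1n\log\big(\sum_{i=1}^p a_{i,n}\big)\to M$, which is exactly the claim.

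There is no genuine obstacle in this argument; the only points worth flagging are that $p$ is a fixed finite integer, so that $\tfrac{\log p}{n}\to 0$ and the interchange of $\max$ with $\lim$ over the $p$ indices is harmless, and that the assumption $a_{i,n}\ge 1$ is used purely to remain in the domain of $\log$ and to keep all logarithms nonnegative.
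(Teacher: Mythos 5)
Your argument is correct. The paper itself does not spell out a proof: it merely cites ``induction and a basic lemma found just before Theorem 4 in [AKM65],'' i.e. it handles the two-term case via that external lemma and then iterates. Your version replaces the induction by the direct sandwich $\max_i a_{i,n} \le \sum_{i=1}^p a_{i,n} \le p\,\max_i a_{i,n}$, applies $\tfrac1n\log$, notes that $\max$ commutes with $\tfrac1n\log$ (monotonicity) and with the limit over a \emph{finite} index set, and uses $\tfrac{\log p}{n}\to 0$ to squeeze. This is the same underlying estimate as AKM's two-term lemma, but carried out for all $p$ at once, which yields a self-contained and arguably cleaner proof than the reference-plus-induction the paper gestures at. The only hypotheses you need -- finiteness of $p$ and $a_{i,n}\ge 1$ so that the logarithms are defined -- are exactly the ones you flag, so there is no gap.
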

    \begin{proof}
        The proof is based on induction and a basic lemma that is found just before Theorem 4 in \cite{AKM65}.
    \end{proof}
    
    \begin{lemma}[{[Part (5) of Theorem \ref{ENTROPY}]}]\label{QUEQUE}
        Let $p \geq 1$. Suppose that $X = A_1 \cup \ldots \cup A_p$, where $A_i$ is a compact (pseudo-)invariant subset of $X$ such that $A_i \cap \widetilde X \neq \emptyset$ for every $1 \leq i \leq p$. Then,
        \[
            h_{top}(f) = \max_{1 \leq i \leq p}\!\big\{h_{top}(f|_{A_i})\big\}.
        \]
    \end{lemma}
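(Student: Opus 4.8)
The plan is to prove the two inequalities separately. The inequality $\max_{1\le i\le p}\{h_{top}(f|_{A_i})\}\le h_{top}(f)$ is already available: it is exactly part (2) of Theorem \ref{ENTROPY} (Lemma \ref{INVARIANTENTROPY}) applied to each $A_i$. Hence the whole content lies in showing the reverse inequality $h_{top}(f)\le \max_{1\le i\le p}\{h_{top}(f|_{A_i})\}$, and for this it suffices to fix an arbitrary $X$-open cover $\mathcal C$, bound $h_{top}(f,\mathcal C)$ by the right-hand side, and then take the supremum over $\mathcal C$.

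So fix an $X$-open cover $\mathcal C$ and $n\ge1$. For each $i$ the collection $\mathcal C\cap A_i$ is an $A_i$-open cover, so by Remark \ref{LORC} the number $\aleph_{A_i\cap\widetilde X}\big((\mathcal C\cap A_i)^n\big)$ is finite; moreover, by the identity noted just before Definition \ref{ENTROPY.DEF} (a consequence of Remark \ref{REMARK.CAP}), one has $\aleph_{A_i\cap\widetilde X}(\mathcal C^n)=\aleph_{A_i\cap\widetilde X}\big((\mathcal C\cap A_i)^n\big)$. Since $X=A_1\cup\dots\cup A_p$ we have $\widetilde X=\bigcup_{i=1}^p (A_i\cap\widetilde X)$, so concatenating, for each $i$, a minimal sub-collection of $\mathcal C^n$ that covers $A_i\cap\widetilde X$ produces a sub-collection of $\mathcal C^n$ covering $\widetilde X$; this gives
\[
 \aleph_{\widetilde X}(\mathcal C^n)\ \le\ \sum_{i=1}^p \aleph_{A_i\cap\widetilde X}(\mathcal C^n)\ =\ \sum_{i=1}^p \aleph_{A_i\cap\widetilde X}\big((\mathcal C\cap A_i)^n\big).
\]

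Now put $a_{i,n}:=\aleph_{A_i\cap\widetilde X}\big((\mathcal C\cap A_i)^n\big)\ge1$. By Lemma \ref{COVERENTROPY} each sequence $\{\tfrac1n\log a_{i,n}\}_{n\ge1}$ converges, with limit $h_{top}(f|_{A_i},\mathcal C\cap A_i)$. Applying Lemma \ref{ELEMENTARY} to $a_{1,n},\dots,a_{p,n}$ together with the displayed inequality,
\[
 h_{top}(f,\mathcal C)=\lim_{n\to\infty}\frac{\log\aleph_{\widetilde X}(\mathcal C^n)}{n}\ \le\ \lim_{n\to\infty}\frac1n\log\!\Big(\sum_{i=1}^p a_{i,n}\Big)=\max_{1\le i\le p}\big\{h_{top}(f|_{A_i},\mathcal C\cap A_i)\big\}\ \le\ \max_{1\le i\le p}\big\{h_{top}(f|_{A_i})\big\}.
\]
Taking the supremum over all $X$-open covers $\mathcal C$ yields $h_{top}(f)\le\max_{1\le i\le p}\{h_{top}(f|_{A_i})\}$, and combined with part (2) this proves the claimed equality.

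I do not expect a genuine obstacle here: the only points requiring care are that $\mathcal C^n$ actually covers $\widetilde X$ (which holds by Lemma \ref{FINITECOVERS}(i), since $X$ is $f$-invariant) and that all the minimal cardinalities appearing are finite (so that the relevant limits exist and Lemma \ref{ELEMENTARY} is applicable), both of which are ensured by the preparatory results of Section \ref{sec3}. The heart of the argument is simply the estimate $\aleph_{\widetilde X}(\mathcal C^n)\le\sum_i\aleph_{A_i\cap\widetilde X}(\mathcal C^n)$ combined with the elementary Lemma \ref{ELEMENTARY}, exactly mirroring the proof of the analogous property for the AKM-entropy.
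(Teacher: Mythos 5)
Your proof is correct and follows essentially the same approach as the paper: the easy direction via part (2), the bound $\aleph_{\widetilde X}(\mathcal C^n)\le\sum_i\aleph_{A_i\cap\widetilde X}\big((\mathcal C\cap A_i)^n\big)$, Lemma \ref{ELEMENTARY}, and then the supremum over $X$-open covers. You have merely filled in a few justifications (Remark \ref{LORC}, Remark \ref{REMARK.CAP}, and the covering of $\widetilde X$ by the $A_i\cap\widetilde X$) that the paper leaves implicit.
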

    \begin{proof}
        From Lemma \ref{INVARIANTENTROPY} we deduce that
        \[
            \max_{1 \leq i \leq p}\!\big\{h_{top}(f|_{A_i})\big\}\leq h_{top}(f).
        \]
        Now, if $\mathcal C$ is an $X$-open cover, we have that
        \[
            \log\aleph_{\widetilde X}(\mathcal C^n) \leq \log\!\left(\sum_{i = 1}^{p}\aleph_{A_i \cap \widetilde X}\big((\mathcal C \cap A_i)^n\big)\!\right) \qquad \forall \, n \geq 1.
        \]
        Therefore, from Lemma \ref{ELEMENTARY} we deduce that
        \[
            h_{top}(f, \mathcal C)\leq \max_{1\leq i\leq p} \!\big\{ h_{top}(f|_{A_i},\mathcal C\cap A_i)\big\}.
        \]
        Thus, it is enough to take supremum over all the $X$-open covers to obtain the desired result.
    \end{proof}

    To finish the proof of Theorem \ref{ENTROPY}, we will use the following lemma from topology:
    
   \vspace*{-2ex} 
    \begin{lemma}[{[Lebesgue's number, \cite{M00}]}]\label{LEBESGUE}
        Let $A\subset X$ be a compact set, and let $\mathcal C$ be an arbitrary $X$-open cover of $A$. Then, there exists $\delta > 0$ (Lebesgue's number) such that for every $x\in A$, there exists $C \in \mathcal C$ such that $B(x; \delta) \subset C$.
    \end{lemma}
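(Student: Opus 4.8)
The statement is the classical Lebesgue number lemma, and the plan is the standard compactness argument. First I would use the compactness of $A$ together with the hypothesis that $\mathcal{C}$ is an $X$-open cover of $A$ to extract a finite subcover $\{C_1, \ldots, C_k\} \subset \mathcal{C}$ of $A$. From this point there are two essentially equivalent routes; I would take the ``distance function'' one because it produces $\delta$ cleanly.

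Define $\psi : A \to \mathbb{R}$ by
\[
    \psi(x) := \max_{1 \leq i \leq k} \dist(x, X \setminus C_i),
\]
with the convention $\dist(x, \emptyset) = \diam(X)$ (this convention is only relevant in the harmless case $C_i = X$, where that single $C_i$ already witnesses the conclusion for every $\delta$). The key steps are then: (i) each map $x \mapsto \dist(x, X \setminus C_i)$ is $1$-Lipschitz, hence continuous, so $\psi$, being a finite maximum of continuous functions, is continuous on $A$; (ii) $\psi(x) > 0$ for every $x \in A$, because $x$ lies in some $C_i$, which is open in $X$, so the complement $X \setminus C_i$ is a closed set of $X$ not containing $x$, whence $\dist(x, X \setminus C_i) > 0$; (iii) since $A$ is compact and $\psi$ is continuous and strictly positive on $A$, it attains a minimum $\delta := \min_{x \in A} \psi(x) > 0$. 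Finally, given any $x \in A$, choose $i$ with $\psi(x) = \dist(x, X \setminus C_i)$; then $\dist(x, X \setminus C_i) \geq \delta$ forces $B(x; \delta) \subset C_i$, and $C_i \in \mathcal{C}$, which is exactly the claim.

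The only subtlety worth flagging — not really an obstacle — is bookkeeping about ambient spaces: the balls $B(x;\delta)$ and the sets $C_i$ are understood inside $X$ with the subspace topology, and the distance is the one inherited from $\mathbb{R}$; since $X$ is an interval of $\mathbb{R}$ this causes no difficulty, but one should state it so that ``$B(x;\delta) \subset C$'' is unambiguous. As an alternative one can avoid $\psi$ entirely: for each $x \in A$ pick $C_{i(x)} \ni x$ and $\epsilon_x > 0$ with $B(x; 2\epsilon_x) \subset C_{i(x)}$, cover $A$ by the balls $B(x;\epsilon_x)$, extract a finite subcover indexed by $x_1, \ldots, x_m$, set $\delta := \min_j \epsilon_{x_j} > 0$, and observe that any $y \in A$ lies in some $B(x_j;\epsilon_{x_j})$, so $B(y;\delta) \subset B(x_j; 2\epsilon_{x_j}) \subset C_{i(x_j)}$ by the triangle inequality. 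Either argument suffices; I would present the first and, if desired, remark that this is \cite{M00}.
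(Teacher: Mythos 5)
Your proof is correct. The paper does not supply its own proof of this lemma; it simply cites Munkres' \emph{Topology} \cite{M00}, where the argument is the same distance-function one you give (Munkres uses the \emph{average} $\frac{1}{k}\sum_i \dist(x, X\setminus C_i)$ in place of your $\max_i$, but the positivity, continuity, and compactness steps are identical). Both your primary argument and your alternative $2\epsilon$-ball argument are standard and complete, and your remark about interpreting $B(x;\delta)$ relative to $X$ matches the paper's convention for $B(z,\eta)$ as a ball in $(X,\dist)$.
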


    With this, we are ready to prove the last part of Theorem \ref{ENTROPY}.
    
    \begin{lemma}[{[Part (6) of Theorem \ref{ENTROPY}]}]\label{CKCK}
        Suppose that $\{\mathcal C_k\}_{k \geq 1}$ is a sequence of $X$-open covers such that $\diam(\mathcal C_k) \to 0$ as $k \to \infty$. Therefore,
        \begin{enumerate}
            \item[({\it a})] If $h_{top}(f) < \infty$, then $\lim_{k \to \infty} h_{top}(f, \mathcal C_k)$ exists and is equal to $h_{top}(f)$.
            \item[({\it b})] If $h_{top}(f) = \infty$, then $\lim_{k \to \infty} h_{top}(f, \mathcal C_k) = \infty$.
        \end{enumerate}
    \end{lemma}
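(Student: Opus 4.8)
The plan is to fold both cases (a) and (b) into one argument built from the Lebesgue number lemma and the monotonicity of $h_{top}(f,\cdot)$ under refinement. Since $h_{top}(f)=h_{top}(f|_X)$ and $X$ is compact, $f$-invariant, and $X\cap\widetilde X=\widetilde X\neq\emptyset$, every quantity appearing below is well-defined.

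\emph{Step 1: comparison with an arbitrary cover.} Fix an arbitrary $X$-open cover $\mathcal D$. By Lemma \ref{LEBESGUE}, applied to the compact set $A=X$, there is a Lebesgue number $\delta>0$ for $\mathcal D$. Since $\diam(\mathcal C_k)\to 0$, choose $k_0$ with $\diam(\mathcal C_k)<\delta$ for all $k\geq k_0$. For such a $k$ and any $C\in\mathcal C_k$ (note $C\neq\emptyset$, since collections do not contain $\emptyset$), pick $x\in C$; then every $y\in C$ satisfies $\dist(x,y)\leq\diam(C)<\delta$, so $C\subset B(x;\delta)$, and by the defining property of $\delta$ there is $D\in\mathcal D$ with $B(x;\delta)\subset D$. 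Hence each element of $\mathcal C_k$ is contained in an element of $\mathcal D$, i.e. $\mathcal D\preccurlyeq\mathcal C_k$ for every $k\geq k_0$.

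\emph{Step 2: passing to entropies.} Applying part ({\it a}) of Lemma \ref{TOPOCOVERS} with $A=X$ to the pair $\mathcal D\preccurlyeq\mathcal C_k$ gives $h_{top}(f,\mathcal D)\leq h_{top}(f,\mathcal C_k)$ for all $k\geq k_0$, so $\liminf_{k\to\infty}h_{top}(f,\mathcal C_k)\geq h_{top}(f,\mathcal D)$. As $\mathcal D$ was an arbitrary $X$-open cover, taking the supremum over all such covers and using Definition \ref{ENTROPY.DEF} yields
\[
    \liminf_{k\to\infty}h_{top}(f,\mathcal C_k)\geq h_{top}(f).
\]
Conversely, each $\mathcal C_k$ is itself an $X$-open cover, so $h_{top}(f,\mathcal C_k)\leq h_{top}(f)$ for every $k$ by Definition \ref{ENTROPY.DEF}, whence $\limsup_{k\to\infty}h_{top}(f,\mathcal C_k)\leq h_{top}(f)$.

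\emph{Step 3: conclusion.} If $h_{top}(f)<\infty$, the two displayed bounds force $\limsup_{k\to\infty}h_{top}(f,\mathcal C_k)=\liminf_{k\to\infty}h_{top}(f,\mathcal C_k)=h_{top}(f)$, so the limit exists and equals $h_{top}(f)$, proving ({\it a}). If $h_{top}(f)=\infty$, then $\liminf_{k\to\infty}h_{top}(f,\mathcal C_k)=\infty$, hence $\lim_{k\to\infty}h_{top}(f,\mathcal C_k)=\infty$, proving ({\it b}). The argument has no genuinely hard step: the only points needing care are that the elements of $\mathcal C_k$ are nonempty (so a basepoint can be chosen in Step 1) and that the compactness and invariance of $X$ make all the entropies well-defined; the substantive content is entirely absorbed by Lemma \ref{LEBESGUE} and the refinement monotonicity of Lemma \ref{TOPOCOVERS}({\it a}).
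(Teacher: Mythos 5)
Your proof is correct and follows essentially the same route as the paper: both rest on the Lebesgue number lemma to produce $\mathcal D\preccurlyeq\mathcal C_k$ for large $k$, then apply the refinement monotonicity of Lemma \ref{TOPOCOVERS}({\it a}). The only (cosmetic) difference is that you bound $\liminf_k$ from below by an arbitrary $h_{top}(f,\mathcal D)$ and take the supremum at the end, which lets you treat the finite and infinite cases uniformly, whereas the paper fixes an $\epsilon$-near-optimal cover in case ({\it a}) and argues case ({\it b}) separately.
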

    \begin{proof}
        ({\it a}) Assume that $h_{top}(f) < \infty$. Given $\epsilon > 0$, choose an $X$-open cover $\mathcal C$ such that
        \begin{equation}
            h_{top}(f, \mathcal C) > h_{top}(f) - \epsilon. \label{SUP.INEQ}
        \end{equation}
        Let $\delta$ be a Lebesgue number for $\mathcal C$. Now, choose $k^*\geq1$ such that $\diam(\mathcal C_k) < \delta$ for every $k\geq k^*$. Thus, for any $D\in\mathcal C_k$, with $k \geq k^*$, there exists $C\in\mathcal C$ such that $D\subset C$, which implies that $\mathcal C \preccurlyeq \mathcal C_k$ for every $k\geq k^*$. From \eqref{SUP.INEQ} and part ({\it a}) of Lemma \ref{TOPOCOVERS} we deduce that
        \[
            h_{top}(f) \geq h_{top}(f, \mathcal C_k) > h_{top}(f) -\epsilon.
        \]
        As $\epsilon$ was arbitrary, this concludes the proof of part ({\it a}).
        \\[1ex]
        \noindent ({\it b}) If $h_{top}(f) = \infty$, let $\rho > 0$ and choose an $X$-open cover $\mathcal C$ such that $h_{top}(f, \mathcal C) > \rho$. Following the same approach as in the proof of part (a), it can be shown that $\lim_{k \to \infty} h_{top}(f, \mathcal C_k) = \infty$. 
    \end{proof}


\section{Using Bowen's formula to compute the topological entropy}\label{sec5}
    Let us recall that P. Walters provided an example in which the value of the topological entropy calculated using Bowen's formula depends on the metric used. In particular, the value of the entropy can be different for metrics that are even equivalent (see \cite[page 171]{Wa00}). Walters' example results from the non-compactness of the space where the system is defined.

    This section is devoted to proving Theorem \ref{BOWEN.MAIN}, which establishes equality \eqref{LIMITS.BOWEN.MAIN} and that our concept of topological entropy does not depend on the choice of the metric (as long as it generates the induced topology on the respective invariant space where the system is defined). In short, we relate our concept of topological entropy with Bowen's formula for the map restricted to $\widetilde X$.  \\[-2ex]
    
    We begin this section by stating the so-called {\em dynamical distance} or {\em Bowen metric} on $X$. For all $n \geq 1$, we consider the metric on $X$ defined by the map $\rho_n: X \times X \to [0, \infty)$ such that
    \[
        \rho_n(x, y) := \max_{0 \leq j \leq n - 1}\big\{\dist\big(f^j(x), f^j(y)\big)\big\} \qquad \forall \, (x, y) \in X \times X,
    \]
    where ``$\dist$'' is the Euclidean metric. We highlight that it is possible to consider any other metrics equivalent to $\dist$, as it will be proved later (see the comment following the proof of Theorem \ref{BOWEN.FORMULA}).
    
    We denote the open ball in $(X, \rho_n)$ (respectively, $(X, \dist)$) of center $z \in X$ and radius $\eta > 0$ by $B^{\rho_n}(z, \eta)$ (respectively, $B(z, \eta)$). The set $B^{\rho_n}(z, \eta)$ is commonly referred to as a {\em dynamical ball}. As usual, note that $B^{\rho_n}(z, \eta)$ depends on how $f$ is defined over $\Delta$. To relate the concepts in this Section with our definition of entropy, we consider a modified ball as follows.

    \begin{definition}
        Consider $n \geq 1$, $\epsilon > 0$, and let $A$ be a (pseudo-)invariant subset of $X$ such that $A \cap \widetilde X \neq \emptyset$. We define the {\em modified dynamical ball} of center $z \in X$ and radius $\eta > 0$ as
        \begin{equation}
            Q_A^n(z; \eta):= B^{\rho_n}(z; \eta) \cap \bigcap_{j = 0}^{n - 1} f^{- j}(A \setminus \Delta). \label{INTEROPEN1}
        \end{equation}
    \end{definition}
    
    
    Observe that the compactness of $A$ is not required for this definition nor most of the results below. 
    Next, we show that the set defined by \eqref{INTEROPEN1} is an open set of $A$.
    
    
    \begin{lemma}
        Consider $n \geq 1$, $\epsilon > 0$, and a (pseudo-)invariant subset $A \subset X$ such that $A \cap \widetilde X \neq \emptyset$. Then, for every $x\in X$, $Q_A^n(x; \epsilon)$ is a relative open set of $A$.
    \end{lemma}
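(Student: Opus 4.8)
The plan is to show that $Q_A^n(x;\epsilon)$ is the intersection of $A$ with an open subset of $X$, by exhibiting both factors in \eqref{INTEROPEN1} as (traces of) open sets. First I would recall that $B^{\rho_n}(x;\epsilon)$ is, by definition, an open set in the metric space $(X,\rho_n)$; but because $\rho_n$ is built from $f$, which is only piecewise continuous, this ball need \emph{not} be open in the usual topology of $X$. So the key observation is that once we intersect with $\bigcap_{j=0}^{n-1} f^{-j}(X\setminus\Delta)$ — equivalently, once we restrict attention to points whose first $n$ iterates avoid $\Delta$ — each map $f^j$ ($0\le j\le n-1$) is continuous at such points, and hence $x\mapsto \dist(f^j(x),f^j(z))$ is continuous there, so the ball behaves like an ordinary open ball.

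The main steps, in order, are as follows. Step one: write $U:=\bigcap_{j=0}^{n-1} f^{-j}(X\setminus\Delta)$. I claim $U$ is open in $X$: since $\Delta$ is finite and hence closed, $X\setminus\Delta$ is open, and $f^j$ restricted to the open set $\widetilde X^{(j)}:=\bigcap_{i=0}^{j-1} f^{-i}(X\setminus\Delta)$ is continuous — this is exactly the continuity of iterates of the pc-map away from the preimages of $\Delta$, which is used throughout Section~\ref{sec2}. An induction on $j$ shows $f^{-j}(X\setminus\Delta)\cap U$ is open, so $U$ itself is open. Step two: on $U$, each function $g_j(z):=\dist(f^j(z),f^j(x))$ is continuous (composition of the continuous $f^j|_U$ with the continuous distance-to-a-point map), hence $\rho_n(\cdot,x)=\max_{0\le j\le n-1} g_j$ is continuous on $U$, so $\{z\in U:\rho_n(z,x)<\epsilon\}$ is open in $U$, and therefore open in $X$. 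Step three: observe that $Q_A^n(x;\epsilon)=\{z\in U:\rho_n(z,x)<\epsilon\}\cap\bigcap_{j=0}^{n-1} f^{-j}(A\setminus\Delta)$, and since $\bigcap_{j=0}^{n-1} f^{-j}(A\setminus\Delta)\subset U\cap A$, we get $Q_A^n(x;\epsilon)=V\cap A$ where $V:=\{z\in U:\rho_n(z,x)<\epsilon\}\cap\bigcap_{j=0}^{n-1} f^{-j}((A\setminus\Delta)\cup(X\setminus A))$; more cleanly, $Q_A^n(x;\epsilon) = W \cap \big(\bigcap_{j=0}^{n-1} f^{-j}(A\setminus\Delta)\big)$ with $W$ open, and one checks directly that $\bigcap_{j=0}^{n-1} f^{-j}(A\setminus\Delta)$ is relatively open in $A\cap\widetilde X$-type sets only after intersecting with $A$ — so the safest route is to note $f^{-j}(A\setminus\Delta)\cap A\cap U$ is relatively open in $A\cap U$ because $f^j|_{A\cap U}$ is continuous into $A$ and $A\setminus\Delta$ is relatively open in $A$. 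Assembling, $Q_A^n(x;\epsilon)$ is a finite intersection of relatively open subsets of $A$, hence relatively open in $A$.

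The step I expect to be the main obstacle is the second half of step one and the bookkeeping in step three: one has to be careful that ``$f^j$ is continuous'' is only available \emph{after} excising the preimages of $\Delta$ up to time $j$, so the argument is genuinely inductive, and one must phrase the relative-openness of the sets $f^{-j}(A\setminus\Delta)$ correctly — they are relatively open in $A\cap U$ (where $U$ is the good set above), not in all of $A$, but since $U$ is open in $X$ this is enough to conclude relative openness in $A$ of the final intersection. I would therefore present step one as a short induction lemma (or simply invoke the already-noted fact, just before Lemma~\ref{Xtilde.F}, that $f|_{\widetilde X}$ is continuous, applied at the finite level $n$), and then the rest is a routine ``finite intersection of relatively open sets is relatively open'' argument; no delicate estimates are needed.
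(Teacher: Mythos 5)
Your proof is correct and takes essentially the same approach as the paper: both hinge on the finiteness of $\Delta^n$ (so $X\setminus\Delta^n$ is open) together with the continuity of $f^j$ on $X\setminus\Delta^n$ for $j<n$, and both end up writing $Q_A^n(x;\epsilon)$ as the trace on $A$ of an open subset of $X$ (the paper records this as equation \eqref{ABIERTO}, you record it as $W\cap A$ with $W\subset U$ open). The only stylistic remark is that your Step one can be shortened: $U=\bigcap_{j=0}^{n-1}f^{-j}(X\setminus\Delta)=X\setminus\Delta^n$ is already open simply because $\Delta^n$ is a finite set, so the induction is not needed there.
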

    \begin{proof}
        Fix $n \geq 1$, $\epsilon > 0$ and $x \in X$. Then, $y \in Q_A^n(x; \epsilon)$ if and only if
        \[
            f^j(y) \in A \setminus \Delta \quad \land \quad \dist\big(f^j(x), f^j(y)\big) < \epsilon \qquad \forall \, j \in \{0, \ldots, n - 1\},
        \]
        which happens if and only if $f^j(y)\in \big(B\big(f^j(x); \epsilon\big) \cap A\big) \setminus \Delta$ for all $j \in \{0, \ldots, n - 1\}$. Thus, we deduce
        \begin{equation}
            Q_A^n(x; \epsilon) = \bigcap_{j = 0}^{n - 1} f^{-j}\big(\big(B(f^j(x); \epsilon) \cap A\big) \setminus \Delta\big). \label{NICOLE}
        \end{equation}
        Now, observe that,
        \begin{align*}
            \bigcap_{j = 0}^{n - 1} f^{- j}(A \setminus \Delta) = \bigcap_{j = 0}^{n - 1} \big(f^{- j}(A) \setminus f^{- j}(\Delta)\big) \subset A \setminus \Delta^n.
        \end{align*}
        Moreover, by using this together with \eqref{TECH.DETAIL}, we deduce
        \begin{equation}
            \bigcap_{j = 0}^{n - 1} f^{-j}(A \setminus \Delta) = A \setminus \Delta^n. \label{INTER.D}
        \end{equation}
        Therefore, from \eqref{NICOLE} and \eqref{INTER.D} we have
        \begin{equation}
            Q_A^n(x; \epsilon) = \left[A\cap\bigcap_{j = 0}^{n - 1} f^{-j} \big(B(f^{j}(x); \epsilon)\big)\right] \setminus \Delta^n. \label{ABIERTO}
        \end{equation}
        Last, as $\Delta^n$ is a finite set, we conclude that $Q_A^n(x;\epsilon)$ is a relative open set of $A$.
    \end{proof}
    
    Now, notice that
    \begin{align}
        \dist(x, y)\leq \rho_n(x, y)\leq \sum_{j = 0}^{n - 1} \dist\big(f^j(x), f^j(y)\big) \qquad \forall \, x, y \in X, \label{DANA1}
    \end{align}
    which implies that any Cauchy sequence with respect to $\rho_n$ is also a Cauchy sequence with respect to $\dist$. Furthermore, if $\{f^j(x_i)\}_{i \geq 0}$ is a Cauchy sequence with respect to $\dist$ for all $j \in \{0, \ldots, n - 1\}$, then the sequence $\{x_i\}_{i \geq 0}$ is a Cauchy sequence with respect to $\rho_n$. Now, we prove that the subspace $\big(A \cap \widetilde X, \rho_n\big)$ is totally bounded, which is equivalent to the following result.
    
    \begin{lemma}\label{SCOMPACT}
        Let $A \subset X$ be a (pseudo-)invariant subset such that $A \cap \widetilde X \neq \emptyset$. Then, for all $n \geq 1$, every sequence in $\big(A \cap \widetilde X, \rho_n\big)$ has a Cauchy subsequence.
    \end{lemma}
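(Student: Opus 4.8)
The plan is to exploit the sequential compactness of the ambient compact interval $(X,\dist)$ together with the elementary observation recorded just after \eqref{DANA1}: a sequence $\{x_i\}_{i\geq 0}$ in $X$ is Cauchy with respect to $\rho_n$ as soon as each of the $n$ coordinate sequences $\{f^j(x_i)\}_{i\geq 0}$, $j\in\{0,\ldots,n-1\}$, is $\dist$-Cauchy. Since $n$ is fixed and finite, it suffices to pass to a subsequence $n$ times, making one more coordinate Cauchy at each step.

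More precisely, fix $n\geq 1$ and let $\{x_i\}_{i\geq 0}$ be a sequence in $A\cap\widetilde X$. Because each $x_i$ lies in $\widetilde X$, its whole orbit avoids $\Delta$, so $f^j(x_i)$ is a well-defined point of $X$ for every $j\geq 0$ (independently of how $f$ is defined on $\Delta$). First I would use that $(X,\dist)$ is sequentially compact to extract a subsequence $\{x_i^{(0)}\}_{i\geq0}$ of $\{x_i\}_{i\geq0}$ along which $\{f^0(x_i^{(0)})\}_{i\geq0}=\{x_i^{(0)}\}_{i\geq0}$ converges in $(X,\dist)$, hence is $\dist$-Cauchy. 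From $\{x_i^{(0)}\}$ I would then extract a further subsequence $\{x_i^{(1)}\}$ along which $\{f^1(x_i^{(1)})\}$ converges in $(X,\dist)$; this does not destroy the property obtained at the previous step, since a subsequence of a Cauchy sequence is Cauchy. Iterating this for $j=2,\ldots,n-1$, after $n$ extractions I obtain a subsequence $\{x_i^{(n-1)}\}_{i\geq0}$ of the original sequence such that $\{f^j(x_i^{(n-1)})\}_{i\geq0}$ is $\dist$-Cauchy for every $j\in\{0,\ldots,n-1\}$.

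Finally, by the observation following \eqref{DANA1}, the sequence $\{x_i^{(n-1)}\}_{i\geq0}$ is $\rho_n$-Cauchy, and it is contained in $A\cap\widetilde X$ since we only passed to subsequences of the original sequence. This produces the desired Cauchy subsequence, which (in a metric space) is exactly the assertion that $\big(A\cap\widetilde X,\rho_n\big)$ is totally bounded. I do not expect any serious obstacle here: the only point to keep in mind is that $f$ --- and hence each $f^j$ --- need not be continuous, so one cannot pass limits through $f$; but continuity is never used, as each coordinate sequence already lives in the compact space $(X,\dist)$, and total boundedness of $X$ is all that is invoked. Compactness of $A$ is likewise not needed, in agreement with the remark preceding the statement.
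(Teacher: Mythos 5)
Your proof is correct and follows essentially the same diagonal-type extraction used in the paper: pass to a $\dist$-convergent subsequence in the compact ambient interval $n$ times (once per coordinate $f^j$, $j=0,\ldots,n-1$), note each extraction preserves the Cauchy property of earlier coordinates, and then invoke the observation after \eqref{DANA1} to conclude the final subsequence is $\rho_n$-Cauchy. Your explicit remarks that continuity of $f$ is never needed and that compactness of $A$ itself is not required also match the paper's intent.
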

    
    \begin{proof}
        Consider a sequence $\{x_k\}_k \subset A \cap \widetilde X$, where the subscripts $k$ are non-negative integers. Clearly, $\{x_k\}_k \subset \overline A \subset X$, which implies that there exists a $\dist$-convergent subsequence $\{x_k^0\}_k \subset \{x_k\}_k$ within $\overline A$. Since $x_k^0 \in A \cap \widetilde X$ for every $k \geq 0$, we conclude that $\{x_k^0\}_k$ forms a $\dist$-Cauchy sequence. Similarly, as $\big\{f(x^0_k)\big\}_k \subset A\cap \widetilde X$, there exists a $\dist$-Cauchy subsequence $\{x_k^1\}_k \subset \{x^0_k\}_k$ such that $\big\{f(x^1_k)\big\}_k$ is a $\dist$-Cauchy subsequence of $\big\{f(x^0_k)\big\}_k$. By repeating this process iteratively, we obtain a $\dist$-subsequence $\{x_k^j\}_k$ of $\{x_k\}_k$ such that $\big\{f^j(x_k^j)\big\}_k\subset A\cap\widetilde X$ forms a Cauchy sequence with respect to $\dist$ for all $j \in \{0, \ldots, n-1\}$. Therefore, by \eqref{DANA1}, we have that $\{x_k^j\}_k \subset A \cap \widetilde X$ is a Cauchy sub-sequence of $\{x_k\}_k$ with respect to $\rho_n$.
    \end{proof}

    Next, we recall the classical concepts used to define Bowen's formula. Let $A \subset X$ be a (pseudo-)invariant subset such that $A \cap \widetilde X \neq \emptyset$. The set $E \subset A \cap \widetilde X$ is said to be an $(n, \epsilon)$-{\em separated set} of $A \cap \widetilde X$ if for all $x, y \in E$ with $x \neq y$, there exists $j \in \{0, \ldots, n - 1\}$ such that $\dist\big(f^{j}(x), f^j(y)\big) \geq \epsilon$. We denote the maximal cardinality of $(n, \epsilon)$-separated sets of $A \cap \widetilde X$ by $s_n(A; f, \epsilon)$, where
    \begin{equation}
        s_n(A; f, \epsilon) = \max\!\big\{\#E\ :\ \text{$E$ is an $(n, \epsilon)$-separated set of $A \cap \widetilde X$}\big\}.\label{PIERRE2}
    \end{equation}
    Now, we prove that this quantity is well-defined. 
    
    \begin{lemma}\label{SN.FINITE}
        Let $A$ be a (pseudo-)invariant subset of $X$ such that $A \cap \widetilde X \neq \emptyset$. Then, for every $n \geq 1$ and $\epsilon > 0$, $s_n(A; f, \epsilon) < \infty$.
    \end{lemma}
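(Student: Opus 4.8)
The plan is to recognize an $(n,\epsilon)$-separated set of $A \cap \widetilde X$ as an ordinary $\epsilon$-separated set for the Bowen metric $\rho_n$, and then to invoke the total boundedness of $\big(A \cap \widetilde X, \rho_n\big)$ that was established (in equivalent form) in Lemma \ref{SCOMPACT}.

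First I would record the elementary reformulation of the definition: since $\rho_n(x,y) = \max_{0 \leq j \leq n - 1} \dist\big(f^j(x), f^j(y)\big)$, a subset $E \subset A \cap \widetilde X$ is $(n,\epsilon)$-separated if and only if $\rho_n(x, y) \geq \epsilon$ for all distinct $x, y \in E$. In particular the family of $(n,\epsilon)$-separated sets is non-empty (each singleton of $A \cap \widetilde X \neq \emptyset$ qualifies), so $s_n(A; f, \epsilon)$ in \eqref{PIERRE2} is well-defined as a supremum as soon as the relevant cardinalities are bounded.

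Next I would use Lemma \ref{SCOMPACT}: since every sequence in $\big(A \cap \widetilde X, \rho_n\big)$ has a $\rho_n$-Cauchy subsequence, the metric space $\big(A \cap \widetilde X, \rho_n\big)$ is totally bounded, hence it admits a finite cover by $\rho_n$-balls $B^{\rho_n}(z_1; \epsilon/2), \ldots, B^{\rho_n}(z_M; \epsilon/2)$. If $E$ is any $(n,\epsilon)$-separated set, the triangle inequality for $\rho_n$ shows that each such ball contains at most one point of $E$ (two points of the same ball lie at $\rho_n$-distance $< \epsilon$ of one another), whence $\# E \leq M$. Taking the maximum over all such $E$ yields $s_n(A; f, \epsilon) \leq M < \infty$, as required.

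There is no real obstacle here: the substantive work — obtaining compactness-type control of $\big(A \cap \widetilde X, \rho_n\big)$ even though $A \cap \widetilde X$ is typically non-compact in the Euclidean metric — was already carried out in Lemma \ref{SCOMPACT}, and the only point needing a line of justification is the standard equivalence ``totally bounded $\iff$ every sequence admits a Cauchy subsequence''. Alternatively, one can bypass that equivalence and argue by contradiction: an infinite $(n,\epsilon)$-separated set $E$ would contain a sequence of pairwise distinct points, which by Lemma \ref{SCOMPACT} has a $\rho_n$-Cauchy subsequence, forcing two distinct points of $E$ to lie within $\rho_n$-distance $< \epsilon$ — contradicting $(n,\epsilon)$-separation.
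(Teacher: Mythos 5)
Your proposal is correct and, in its alternative formulation, matches the paper's proof exactly: the paper argues by contradiction, extracting a sequence of pairwise distinct points from a hypothetical infinite $(n,\epsilon)$-separated set, applying Lemma~\ref{SCOMPACT} to obtain a $\rho_n$-Cauchy subsequence, and then observing that eventually two of its terms lie within $\rho_n$-distance $\epsilon$, contradicting separation. Your primary argument (Lemma~\ref{SCOMPACT} implies total boundedness, hence a finite cover by $\rho_n$-balls of radius $\epsilon/2$, hence a pigeonhole bound) is only a mild repackaging of the same idea and is equally valid, though it carries the small extra burden of invoking the equivalence between sequential compactness of the completion and total boundedness — a step the paper's direct contradiction avoids.
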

    \begin{proof}
        Let $n \geq 1$ and $\epsilon > 0$. We begin the proof assuming, by contradiction, that $s_n(f, \epsilon) = \infty$. This implies the existence of an $(n,\epsilon)$-separated set $E \subset A \cap \widetilde X$ such that $\#E = \infty$. Thus, there exists an infinite sequence $\{x_k\}_{k\geq 0}\subset E$ where each element is different from one another (i.e., $x_i \neq x_j$ for $i \neq j$). Since $\{x_k\}_{k \geq 0}\subset A \cap \widetilde X$, we can use Lemma \ref{SCOMPACT} to conclude that there exists a $\rho_n$-Cauchy subsequence $\{x_{k_j}\}_{j \geq 0}\subset E$. Consequently, there exists an integer $m\geq 1$ such that
        \[
            \rho_n(x_{k_i}, x_{k_j}) < \epsilon \qquad \forall \, i, j \geq m.
        \]
        However, this contradicts the fact that $E$ is an $(n, \epsilon)$-separated set of $A \cap \widetilde X$. Therefore, we conclude that $s_n(A; f, \epsilon) < \infty$.
    \end{proof}
    
    In a dual way, let $A \subset X$ be a (pseudo-)invariant subset such that $A \cap \widetilde X \neq \emptyset$, and let $E \subset A \cap \widetilde X$. The set $E$ is called $(n, \epsilon)$-{\em spanning set of $A\cap\widetilde X$} if for all $x \in A \cap \widetilde X$, there exists $y \in E$ such that $\dist\big(f^{j}(x), f^j(y)\big) < \epsilon$ for all $j \in \{0, \ldots, n - 1\}$, which is equivalent to
    \[
        A \cap \widetilde X \subset \bigcup_{y \in E} Q_A^n(y; \epsilon).
    \]
    Now, we prove that the collection of finite spanning sets is non-empty. 
    
    \begin{lemma}\label{RELCOM}
        Let $A\subset X$ be a (pseudo-)invariant subset such that $A \cap \widetilde X \neq \emptyset$. Then, for every $n \geq 1$ and $\epsilon > 0$, there exists a finite set $E \subset A \cap \widetilde X$ that is an $(n, \epsilon)$-spanning set of $A \cap \widetilde X$.
    \end{lemma}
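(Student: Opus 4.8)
The plan is to produce the required spanning set as a \emph{maximal} $(n,\epsilon)$-separated set, exploiting the fact that such sets are finite by Lemma~\ref{SN.FINITE}.

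First I would invoke Lemma~\ref{SN.FINITE} to guarantee $s_n(A; f, \epsilon) < \infty$. Hence there exists an $(n,\epsilon)$-separated set $E \subset A \cap \widetilde X$ realizing this maximum, i.e. with $\#E = s_n(A; f, \epsilon)$; such an $E$ is in particular maximal, in the sense that no strictly larger $(n,\epsilon)$-separated subset of $A \cap \widetilde X$ exists.

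Next I would check that this $E$ is automatically $(n,\epsilon)$-spanning. Suppose it is not: then there is a point $x \in A \cap \widetilde X$ such that for every $y \in E$ one has $\dist\big(f^j(x), f^j(y)\big) \geq \epsilon$ for some $j \in \{0,\dots,n-1\}$, that is $\rho_n(x,y) \geq \epsilon$ for all $y \in E$. Since $\rho_n(x,x) = 0 < \epsilon$, necessarily $x \notin E$, so $E \cup \{x\}$ is an $(n,\epsilon)$-separated subset of $A \cap \widetilde X$ with $\#(E \cup \{x\}) = s_n(A; f, \epsilon) + 1$, contradicting the maximality of $s_n(A; f, \epsilon)$. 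Therefore every $x \in A \cap \widetilde X$ lies at $\rho_n$-distance strictly less than $\epsilon$ from some $y \in E$, which is exactly the condition $A \cap \widetilde X \subset \bigcup_{y \in E} Q_A^n(y; \epsilon)$, so $E$ is the desired finite $(n,\epsilon)$-spanning set.

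An alternative route is to argue directly from total boundedness: Lemma~\ref{SCOMPACT} states that every sequence in $\big(A \cap \widetilde X, \rho_n\big)$ admits a $\rho_n$-Cauchy subsequence, so this metric subspace is totally bounded, and applying the definition of total boundedness with radius $\epsilon$ produces a finite $\epsilon$-net, which is precisely a finite $(n,\epsilon)$-spanning set. Either way, there is no genuine obstacle here; the only point deserving care is noting that the finiteness of $s_n(A; f, \epsilon)$ (itself a consequence of the total boundedness established in Lemma~\ref{SCOMPACT}) is exactly what guarantees a maximal separated set exists to play the role of the spanning set.
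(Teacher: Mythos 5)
Your proof is correct and takes a genuinely different route from the paper. The paper's argument is constructive: it decomposes $A \cap \widetilde X$ into finitely many pieces $I_j \cap A \cap \widetilde X$, with each $I_j$ an open interval lying inside a connected component of $X \setminus \Delta^n$, extracts a finite ball cover of the closure of each piece, and takes the centres as the spanning set $E$. You instead run the classical metric-space argument that a maximal $(n,\epsilon)$-separated subset of $A \cap \widetilde X$ is automatically $(n,\epsilon)$-spanning, with the existence of a maximum-cardinality separated set supplied by $s_n(A;f,\epsilon) < \infty$ (Lemma~\ref{SN.FINITE}); your alternative route reads the finite $\epsilon$-net directly off the total boundedness of $\big(A\cap\widetilde X, \rho_n\big)$ established in Lemma~\ref{SCOMPACT}. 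Both cited lemmas precede Lemma~\ref{RELCOM} and hold under the same hypotheses, so the appeal is legitimate and non-circular; and your translation of the spanning condition into ``$\rho_n(x,y)<\epsilon$ for some $y\in E$'' matches the reformulation via the sets $Q_A^n(y;\epsilon)$ that the paper records just before the lemma (indeed, since $x \in A\cap\widetilde X \subset A\setminus\Delta^n$, membership in $Q_A^n(y;\epsilon)$ reduces to the $\rho_n$-condition). Your version buys brevity and generality: it never touches $\Delta^n$ or the interval geometry, and it makes the separated/spanning duality explicit at the outset, whereas the paper only records the comparison $r_n \le s_n$ afterwards in Lemma~\ref{LEMMAI}. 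The paper's covering argument, in turn, is self-contained and independent of Lemma~\ref{SN.FINITE}, at the cost of relying on the one-dimensional structure. One small stylistic note: ``maximal'' here should be understood as ``of maximum cardinality'' (which is what your contradiction actually uses), not merely maximal in the inclusion order; your phrasing is consistent with that reading.
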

    
    \begin{proof}
        Let $n \geq 1$ and $\epsilon > 0$. Since $\Delta^n$ is finite, then there exist open intervals $I_1, \ldots, I_m\subset X$ such that $I_j \cap A \cap \widetilde X \neq \emptyset$ for all $j \in \{1, \ldots, m\}$, and
        \[
            A \cap\widetilde X = (A \cap \widetilde X) \setminus \Delta^n = \bigcup_{j = 1}^m I_j\cap A\cap\widetilde X.
        \]
        It is clear that, if $x, y \in I_i\cap A\cap\widetilde X$ for some $i \in \{1, \ldots, m\}$, then $f^k(x)$ and $f^k(y)$ belong to the same continuity piece for each $k\in\{0, \ldots, n - 1\}$. Now, note that for every $j \in \{1, \ldots, m\}$, the collection $\big\{B(x, \epsilon)\ :\ x\in I_j\cap A\cap \widetilde X\big\}$ is an open cover of $\overline{I_j \cap A \cap \widetilde X}\subset X$. Therefore, there exist $x_0^j, x_1^j, \ldots, x_{p_j}^j \in I_j\cap A\cap \widetilde X$ such that
        \[
            I_j \cap A \cap \widetilde X \subset \overline{I_j\cap A \cap \widetilde X} \subset B(x_0^j, \epsilon) \cup \ldots \cup B(x_{p_j}^j, \epsilon).
        \]
        Thus, we have that the finite set $E := \bigcup_{j = 1}^m \{x_0^j, x_1^j, \ldots, x_{p_j}^j\}$ is an $(n, \epsilon)$-spanning set of $A\cap\widetilde X$, concluding the proof.
    \end{proof}
    
    Observe that, under the same assumptions as in Lemma \ref{RELCOM}, it makes sense to define the minimal cardinality of $(n, \epsilon)$-spanning sets of $A \cap \widetilde X$, by
    \[
        r_n(A; f, \epsilon) := \min\{\#E\ :\ \text{$E$ is an $(n, \epsilon)$-spanning set of $A\cap\widetilde X$}\} < \infty.
    \]
    
    Now, we prove some properties of comparison and monotonicity of $r_n(A; f, \epsilon)$ and $s_n(A; f, \epsilon)$.
    
    \begin{lemma}\label{LEMMAI}
        Consider $n \geq 1$, $\epsilon > 0$ and let $A\subset X$ be a (pseudo-)invariant subset such that $A \cap \widetilde X \neq \emptyset$. Then, the following statements hold:
        \\[-2ex]
        \begin{enumerate}
            \item[{\it (i)}] If $0 < \epsilon' < \epsilon$, then $r_n(A; f, \epsilon)\leq r_n(A; f, \epsilon')$ and $s_n(A; f, \epsilon)\leq s_n(A; f, \epsilon')$.
            \item[{\it (ii)}] $r_n(A; f, \epsilon)\leq s_n(A; f, \epsilon)\leq r_n(A; f, \epsilon/2)$.
        \end{enumerate}
    \end{lemma}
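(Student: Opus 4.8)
The plan is to follow the classical comparison arguments between spanning and separated sets, adapted here to the set $A\cap\widetilde X$ equipped with the Bowen metric $\rho_n$. Everything rests on the finiteness facts already established: $s_n(A; f, \epsilon)<\infty$ by Lemma \ref{SN.FINITE}, and $r_n(A; f, \epsilon)<\infty$ by Lemma \ref{RELCOM}. So I would begin by invoking those two lemmas to guarantee that a maximal $(n,\epsilon)$-separated set and a minimal $(n,\epsilon')$-spanning set genuinely exist, and then run three elementary inclusion/maximality/injection arguments.

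For part {\it (i)} I would argue by inclusion of the relevant families. If $0<\epsilon'<\epsilon$ and $F$ is an $(n,\epsilon')$-spanning set of $A\cap\widetilde X$, then the defining inequality $\dist\big(f^j(x),f^j(y)\big)<\epsilon'$ for all $j\in\{0,\ldots,n-1\}$ immediately yields $\dist\big(f^j(x),f^j(y)\big)<\epsilon$, so $F$ is also an $(n,\epsilon)$-spanning set; taking $F$ of minimal cardinality gives $r_n(A; f,\epsilon)\leq r_n(A; f,\epsilon')$. Dually, if $E$ is an $(n,\epsilon)$-separated set, then for $x\neq y$ in $E$ the index $j$ witnessing $\dist\big(f^j(x),f^j(y)\big)\geq\epsilon>\epsilon'$ also witnesses $\geq\epsilon'$, so $E$ is $(n,\epsilon')$-separated; taking $E$ of maximal cardinality gives $s_n(A; f,\epsilon)\leq s_n(A; f,\epsilon')$.

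For part {\it (ii)}, the inequality $r_n(A; f,\epsilon)\leq s_n(A; f,\epsilon)$ follows from a maximality argument: let $E$ be an $(n,\epsilon)$-separated set with $\#E=s_n(A; f,\epsilon)$ (finite). By maximality, for every $x\in A\cap\widetilde X$ either $x\in E$ or $E\cup\{x\}$ fails to be $(n,\epsilon)$-separated, and in either case there is $y\in E$ with $\dist\big(f^j(x),f^j(y)\big)<\epsilon$ for all $j$; hence $E$ is itself an $(n,\epsilon)$-spanning set, giving $r_n(A; f,\epsilon)\leq\#E=s_n(A; f,\epsilon)$. For the inequality $s_n(A; f,\epsilon)\leq r_n(A; f,\epsilon/2)$, take an $(n,\epsilon)$-separated set $E$ and an $(n,\epsilon/2)$-spanning set $F$ with $\#F=r_n(A; f,\epsilon/2)$; for each $x\in E$ choose $\varphi(x)\in F$ with $\dist\big(f^j(x),f^j(\varphi(x))\big)<\epsilon/2$ for all $j$. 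The triangle inequality shows $\varphi$ is injective: if $\varphi(x_1)=\varphi(x_2)$ with $x_1\neq x_2$, then $\dist\big(f^j(x_1),f^j(x_2)\big)<\epsilon/2+\epsilon/2=\epsilon$ for every $j$, contradicting that $E$ is $(n,\epsilon)$-separated. Thus $\#E\leq\#F$, and taking $E$ of maximal cardinality gives the claim.

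There is essentially no serious obstacle here; the only point requiring care is that every quantity in play is finite, so that the phrases ``maximal $(n,\epsilon)$-separated set'' and ``minimal $(n,\epsilon/2)$-spanning set'' refer to objects that actually exist — and this is precisely what Lemmas \ref{SN.FINITE} and \ref{RELCOM} deliver. Once those are cited up front, the three arguments above are routine.
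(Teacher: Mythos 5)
Your proof is correct and follows essentially the same line as the paper's: inclusion arguments for part (i), maximality of a separated set to show it is spanning, and an injection $E\to F$ via the triangle inequality for the second half of (ii). The only cosmetic difference is that you let $\varphi(x)$ be an arbitrary element of $F$ with $x\in Q_A^n(\varphi(x);\epsilon/2)$ while the paper picks the minimum — either choice works since injectivity is all that is needed.
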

    \begin{proof}
        \noindent{\it (i)} Assume that $0 < \epsilon' < \epsilon$. Let $E' \subset A \cap \widetilde X$ be an $(n, \epsilon')$-spanning set of $A\cap\widetilde X$ such that $\#E' = r_n(A; f, \epsilon')$. We have that
        \[
            A \cap \widetilde X \subset \bigcup_{x \in E'} Q_A^n(x; \epsilon') \subset \bigcup_{x \in E'} Q_A^n(x; \epsilon).
        \]
        Thus, we deduce that $E'$ is an $(n, \epsilon)$-spanning set and, therefore, there exists an $(n, \epsilon)$-spanning set $E \subset E'$ such that $\#E = r_n(A; f, \epsilon)$. We conclude that $r_n(A; f, \epsilon) \leq r_n(A; f, \epsilon')$.
        
        Now, suppose that $E \subset A \cap \widetilde X$ is an $(n, \epsilon)$-separated set of $A \cap \widetilde X$ such that $\#E = s_n(A; f, \epsilon)$. If $x, y\in E$ are different points, then there exists $j \in \{0, \ldots, n - 1\}$ such that
        \[
            \dist\big(f^j(x), f^j(y)\big)\geq \epsilon >\epsilon'.
        \]
        Thus, we deduce that $E$ is an $(n, \epsilon')$-separated set of $A \cap \widetilde X$ and, therefore, there exists $E' \subset A \cap \widetilde X$, an $(n, \epsilon')$-separated set of $A \cap \widetilde X$ such that $E \subset E'$ and $\#E' = s_n(A; f, \epsilon')$. We conclude that $s_n(A; f, \epsilon) \leq s_n(A; f, \epsilon')$.
        \\[1ex]
        \noindent{\it (ii)} Let $E \subset A \cap \widetilde X$ be an $(n, \epsilon)$-separated set of $A \cap \widetilde X$ such that $\# E = s_n(A; f, \epsilon)$. Then, for all $y \in (A \cap \widetilde X) \setminus E$ we have that $E \cup \{y\}$ is not an $(n, \epsilon)$-separated set of $A \cap \widetilde X$. Therefore, there exists $x_y\in E$ such that
        \[
            \dist\big(f^j(x_y), f^j(y)\big) < \epsilon \qquad \forall \, j \in \{0, \ldots, n - 1\}.
        \]
        We deduce that $y \in Q_A^n(x_y; \epsilon)$. Since $y\in (A \cap \widetilde X) \setminus E$ is arbitrary, we deduce that
        \[
            A \cap \widetilde X \subset \bigcup_{x \in E} Q_A^n(x; \epsilon),
        \]
        which implies that $E$ is an $(n, \epsilon)$-spanning set of $A \cap \widetilde X$. Thus, we have that
        \[    
            r_n(A; f, \epsilon) \leq \#E = s_n(A; f, \epsilon).
        \]
        To prove the second inequality, consider $E \subset A \cap \widetilde X$, an $(n, \epsilon)$-separated set of $A \cap \widetilde X$, and $F \subset A \cap \widetilde X$, an $(n, \epsilon/2)$-spanning set of $A \cap \widetilde X$. Then, we have
        \[
            E \subset A \cap \widetilde X \subset \bigcup_{y \in F} Q_A^n(y; \epsilon/2).
        \]
        Now, define the map $\phi: E \to F$ as
        \[
            \phi(x) := \min\!\big\{y\in F\ :\ x \in Q_A^n(y; \epsilon/2)\big\} \qquad \forall \, x \in E.
        \]
        We claim that $\phi$ is an injective map. Indeed, suppose that $x, z\in E$ satisfy $\phi(x) = y = \phi(z)$. Then,
        \[
            \dist\big(f^j(x), f^j(z)\big) \leq \dist\big(f^j(x), f^j(y)\big) + \dist\big(f^j(y), f^j(z)\big) < \frac{\epsilon}{2} + \frac{\epsilon}{2} = \epsilon,
        \]
        for all $j \in \{0, \ldots, n - 1\}$. As $E$ is an $(n, \epsilon)$-separated set of $A \cap \widetilde X$, we deduce that $x = z$, which proves the claim. Thus, we have that $\#E\leq \#F$ and, as $E$ and $F$ are arbitrary, we conclude that $s_n(A; f, \epsilon) \leq r_n(A; f, \epsilon/2)$.
    \end{proof}
    
    Here, note that under the same assumptions as in Lemma \ref{LEMMAI}, its part {\it (i)} lets us conclude that
    \[
        \epsilon \mapsto \limsup_{n \to \infty} \frac{\log s_n(A; f, \epsilon)}{n} \quad \text{ and } \quad \epsilon \mapsto \limsup_{n \to \infty} \frac{\log r_n(A; f, \epsilon)}{n}
    \]
    are non-increasing maps (as long as the involved limits are finite). Thus, we conclude that
    \begin{equation}
        s(A; f) := \lim_{\epsilon \to 0} \limsup_{n \to \infty} \frac{\log s_n(A; f, \epsilon)}{n} \quad \text{ and } \quad r(A; f) := \lim_{\epsilon \to 0} \limsup_{n \to \infty} \frac{\log r_n(A; f, \epsilon)}{n}\label{PIERRE}
    \end{equation}
    always exist, considering that they could be infinite. Next, we prove that the topological entropy can be obtained via Bowen's formula under the assumption of compactness. The following result is a reformulation of Bowen's formula established in Theorem \ref{BOWEN.MAIN}.

    \begin{theorem} \label{BOWEN.FORMULA}
        Let $A\subset X$ be a compact (pseudo-)invariant subset such that $A \cap \widetilde X \neq \emptyset$, then
        $h_{top}(f|_A) = s(A; f) = r(A; f)$.
    \end{theorem}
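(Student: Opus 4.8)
The plan is to prove the two inequalities chains $h_{top}(f|_A) \le s(A;f)$ and $r(A;f) \le h_{top}(f|_A)$, and then close the loop using part (ii) of Lemma~\ref{LEMMAI}, which already gives $r(A;f) \le s(A;f)$ (and in fact $s(A;f) \le r(A;f)$ after passing to the limit in $\epsilon$, since $r_n(A;f,\epsilon/2)$ and $r_n(A;f,\epsilon)$ produce the same $\epsilon \to 0$ limit). So the core work is relating covers to spanning/separated sets, exactly as in the classical Bowen--Dinaburg argument, but carried out inside $A \cap \widetilde X$ with the modified dynamical balls $Q_A^n(z;\epsilon)$ playing the role of ordinary dynamical balls.

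\medskip\noindent\textbf{Step 1: $h_{top}(f|_A) \le r(A;f)$.} Fix an $A$-open cover $\mathcal C$. By Lemma~\ref{LEBESGUE} (applied to the compact set $A$ and an $X$-open cover refining $\mathcal C$, or directly to $\mathcal C$ viewed appropriately) there is a Lebesgue number $\delta > 0$: every $\delta$-ball centered in $A$ lies in some member of $\mathcal C$. Take $\epsilon = \delta/2$ (or $\delta$). Given an $(n,\epsilon)$-spanning set $E \subset A \cap \widetilde X$ of minimal cardinality $r_n(A;f,\epsilon)$, we have $A \cap \widetilde X \subset \bigcup_{y \in E} Q_A^n(y;\epsilon)$, and by the formula \eqref{ABIERTO} each $Q_A^n(y;\epsilon)$ is a subset of $\bigcap_{j=0}^{n-1} f^{-j}(B(f^j(y);\epsilon))$ intersected with $A \setminus \Delta^n$; since each $B(f^j(y);\epsilon) \cap A$ sits inside some $C_j \in \mathcal C$, the set $Q_A^n(y;\epsilon)$ is contained in a single element of $\mathcal C^n = \bigvee_{j=0}^{n-1} f^{-j}(\mathcal C \setminus \Delta)$. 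Hence $\aleph_{A \cap \widetilde X}(\mathcal C^n) \le r_n(A;f,\epsilon)$, so $h_{top}(f|_A,\mathcal C) \le \limsup_n \frac1n \log r_n(A;f,\epsilon) \le r(A;f)$. Taking the supremum over $A$-open covers gives $h_{top}(f|_A) \le r(A;f)$.

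\medskip\noindent\textbf{Step 2: $s(A;f) \le h_{top}(f|_A)$.} Fix $\epsilon > 0$ and an $(n,\epsilon)$-separated set $E \subset A \cap \widetilde X$ with $\#E = s_n(A;f,\epsilon)$. Choose any $A$-open cover $\mathcal C$ with $\diam(\mathcal C) < \epsilon$ (such covers exist: cover $A$ by relatively open intervals of small diameter). Then no member of $\mathcal C^n$ can contain two distinct points of $E$: if $x,y$ both lie in $\bigcap_{j=0}^{n-1} f^{-j}(C_j)$ with $\diam(C_j)<\epsilon$, then $\dist(f^j(x),f^j(y)) < \epsilon$ for all $j \le n-1$, contradicting separation. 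Since $E \subset A \cap \widetilde X$ and every sub-cover of $\mathcal C^n$ covering $A \cap \widetilde X$ must cover $E$, it needs at least $\#E$ members; hence $s_n(A;f,\epsilon) \le \aleph_{A \cap \widetilde X}(\mathcal C^n)$. Therefore $\limsup_n \frac1n \log s_n(A;f,\epsilon) \le h_{top}(f|_A,\mathcal C) \le h_{top}(f|_A)$, and letting $\epsilon \to 0$ yields $s(A;f) \le h_{top}(f|_A)$.

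\medskip\noindent\textbf{Closing the loop and the metric-independence remark.} Combining Steps 1 and 2 with part~(ii) of Lemma~\ref{LEMMAI} (which gives $r_n(A;f,\epsilon) \le s_n(A;f,\epsilon) \le r_n(A;f,\epsilon/2)$, so $r(A;f) = s(A;f)$ after the limit in $\epsilon$) we get the cycle $h_{top}(f|_A) \le r(A;f) = s(A;f) \le h_{top}(f|_A)$, forcing all three quantities to coincide. I expect the main obstacle to be purely bookkeeping: making sure that the passage between $Q_A^n$-balls and elements of $\mathcal C^n$ correctly accounts for the removed sets $\Delta$, $\Delta^n$, and the restriction to $A \cap \widetilde X$ rather than all of $A$ --- the formulas \eqref{NICOLE}, \eqref{INTER.D}, and \eqref{ABIERTO} are exactly what is needed, but one must invoke them carefully so that ``$\diam(C_j) < \epsilon$'' transfers to ``$\dist(f^j(x),f^j(y)) < \epsilon$ on the overlap with $\widetilde X$'' without the discontinuity set causing trouble. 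For the final sentence of Theorem~\ref{BOWEN.MAIN} (metric independence), note that Steps 1--2 only used the Euclidean metric through $\dist$; repeating the argument with any metric $d$ generating the same topology on $A \cap \widetilde X$ gives the same value $h_{top}(f|_A)$ on the left, which is manifestly metric-free, so the Bowen-formula value is independent of the chosen compatible metric.
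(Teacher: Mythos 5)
Your proposal is correct and follows essentially the same route as the paper: both prove $h_{top}(f|_A)\le r(A;f)$ via Lebesgue numbers and spanning sets, prove $s(A;f)\le h_{top}(f|_A)$ via small-diameter covers and separated sets, and close the loop with part~\emph{(ii)} of Lemma~\ref{LEMMAI}. The only small rough edge is that Lemma~\ref{LEBESGUE} is stated for $X$-open covers, so in Step~1 you should either start from an $X$-open cover $\mathcal C$ of $A$ and work with $\mathcal C\cap A$ (as the paper does), or first extend your $A$-open cover to an $X$-open one; and in passing from $Q_A^n(y;\epsilon)$ to an element of $\mathcal C^n$ it is cleaner to invoke \eqref{NICOLE} directly, which expresses $Q_A^n$ exactly as an intersection of preimages of sets of the form $(B\cap A)\setminus\Delta$, matching the form of the members of $\mathcal C^n$.
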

    \begin{proof}
        Using part {\it (ii)} of Lemma \ref{LEMMAI} and a squeeze argument, we conclude that both limits in \eqref{PIERRE} are equal. Thus, it is sufficient to show that $s(A; f) \leq h_{top}(f|_A) \leq r(A;f)$ to conclude the desired result.
        
        Consider $n \geq 1$, $\epsilon > 0$, and let $E \subset A \cap \widetilde X$ be an $(n, \epsilon)$-separated set of $A \cap \widetilde X$ such that $\#E = s_n(A; f, \epsilon)$. Consider $\mathcal C$, an $A$-open cover such that $\diam(\mathcal C) < \epsilon$. Since $A$ is compact, we can assume that $\mathcal C$ is finite. Note that if $x, y\in A \cap \widetilde X$ belong to the same element of $\mathcal C^n$, then
        \[
            \dist \big(f^j(x), f^j(y)\big) < \max_{D \in \, \mathcal C}\{\diam(D)\} < \epsilon \qquad \forall \, j \in \{0, \ldots, n - 1\}.
        \]
        In particular, $\#(E \cap C) \leq 1$ for all $C\in\mathcal C^n$. Thus, we conclude that
        \[
            s_n(A; f, \epsilon) = \#E \leq \aleph_{A \cap \widetilde X}(\mathcal C^n) \qquad \forall \, n \geq 1.
        \]
        Therefore,
        \[
            \limsup_{n \to \infty} \frac{\log s_n(A; f, \epsilon)}{n} \leq \lim_{n\to\infty} \frac{\log \aleph_{A \cap \widetilde X}(\mathcal C^n)}{n} = h_{top}(f|_A, \mathcal C\cap A)\leq h_{top}(f|_A),
        \]
        which lets us conclude that $s(A; f) \leq h_{top}(f|_A)$, when taking the limit as $\epsilon \to 0$. \\[-2ex]
        
        To prove the second inequality, let $\mathcal C$ be an $X$-open cover of $A$ and let $\epsilon > 0$ be the Lebesgue's number associated with $\mathcal C$. Again, since $A$ is compact, we can assume that $\mathcal C$ is finite. Also, let $E \subset A \cap \widetilde X$ be an $(n, \epsilon)$-spanning set of $A \cap \widetilde X$ such that $\#E = r_n(A; f, \epsilon)$. Then, for all $x \in A \cap \widetilde X$ and $j \in \{0, \ldots, n - 1\}$, there exists $C_{x, j} \in \mathcal C$ such that
        \[
            \big[B\big(f^{j}(x), \epsilon\big) \cap A\big] \setminus \Delta \subset \big[C_{x, j} \cap A\big] \setminus \Delta.
        \]
        Then, by using \eqref{NICOLE}, we conclude that
        \[
            Q_A^n(x; \epsilon) = \bigcap_{j = 0}^{n - 1} f^{-j} \big([B(f^j(x); \epsilon) \cap A] \setminus \Delta\big) \subset \bigcap_{j = 0}^{n - 1} f^{-j}([C_{x, j} \cap A] \setminus \Delta).
        \]
        Since $E$ is an $(n, \epsilon)$-spanning set of $A\cap\widetilde X$, we deduce that the collection
        \[
            \mathcal T := \left\{\bigcap_{j = 0}^{n - 1} f^{-j}([C_{x, j} \cap A] \setminus \Delta)\in(\mathcal C \cap A)^n\ :\ x \in E\right\}
        \]
        is a finite $A$-open cover of $A \cap \widetilde X$. Thus, from part {\it (b)} of Lemma \ref{NPROPERTIES}, as $\mathcal T \subset (\mathcal C \cap A)^n$ we have that
        \[
            \aleph_{A \cap \widetilde X}(\mathcal C^n) \leq \#E = r_n(A; f, \epsilon) \qquad \forall \, n \geq 1.
        \]
        Therefore,
        \[
            h_{top}(f|_A; \mathcal C \cap A) = \lim_{n \to \infty} \frac{\log \aleph_{A \cap \widetilde X}(\mathcal C^n)}{n} \leq \limsup_{n \to \infty} \frac{\log r_n(A; f, \epsilon)}{n}.
        \]
        Now, taking the limit as $\epsilon \to 0$, we have that $h_{top}(f|_A; \mathcal C \cap A) \leq r(A; f)$. Since $\mathcal C$ is an arbitrary $X$-open cover of $A$, we conclude $h_{top}(f|_A) \leq r(A; f)$. Thus, $h_{top}(f|_A) = s(A; f) = r(A; f)$.
    \end{proof}

    Since $h_{top}(f)$ is a purely topological concept, the equality in Theorem \ref{BOWEN.FORMULA} shows that there is no dependence on the metric used to compute our entropy. In fact, Note that all the results in this section are valid for any choice of metric, as long as it is equivalent to $\dist$.

\section{Proof of Theorems \ref{main2} and \ref{main3}}\label{sec7}
    This section is devoted to proving Theorems \ref{main2} and \ref{main3}, together with Corollary \ref{GIORNO-GIOVANNA} (see Corollary \ref{COSA}). Without loss of generality, we assume that $f$ is {\it piecewise strictly monotonic} throughout this section; that is, the restriction of $f$ to each of its continuity pieces is injective (note that, as $f$ has a finite number of critical points, it is always possible to adapt the continuity pieces of $f$ so that the map is piecewise strictly monotonic). Furthermore, to enhance the clarity of the arguments, we assume that the points of $\Delta$ fall into two categories: jump discontinuities or critical continuity points of $f$ (that is, we do not allow for removable discontinuities). The proof of Theorem \ref{main2} is divided into 2 parts: Subsections \ref{MURIEL} and \ref{LORENA}; whilst the proof of Theorem \ref{main3} is given in Subsection \ref{GATALINA}.

    We begin by defining a special cover of $\widetilde X$, which will be key for the proofs we provide in this section. We define $\mathcal D := \{X_1, \ldots, X_N\}$, the {\em natural $X$-open cover} of $\widetilde X$, which fulfills $\mathcal D \cap \Delta = \{\emptyset\}$ and, therefore,
    \[
        \mathcal D^n = \bigvee_{j = 0}^{n - 1} f^{-j}(\mathcal D) \qquad \forall \, n \geq 1.
    \]
    In addition, for an arbitrary collection $\mathcal C$ of $X$, we define the {\em topological boundary} of $\mathcal C$, denoted by $\partial \mathcal C$, as the topological boundary (with respect to $X$) of the union of its elements. In particular, observe that $\partial \mathcal D = \Delta$.
    
    \subsection{Preimages of critical points, monotonicity intervals, and entropy} \label{MURIEL}
        We state a straightforward result using that, for every $k\in \{0, 1, \ldots, n\}$, $f^k$ is a continuous and strictly monotonic function on the connected components of $X\setminus \Delta^n$. In particular, when $k = n$, these components coincide with the continuity pieces of $f^n$. This idea serves as a straightforward proof of the following result.
        \begin{lemma}\label{INTCAP}
            Let $n \geq 1$. Then, for every open interval $U\subset X$ and connected component $V$ of $X\setminus\Delta^n$, the set $f^{-k}(U)\cap V$ is either $\emptyset$ or an open interval for each $k \in \{0, 1, \ldots, n\}$.
        \end{lemma}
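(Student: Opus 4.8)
The plan is to reduce the statement to the elementary fact that a continuous, strictly monotonic map on an interval is a homeomorphism onto its (interval) image, once we know that $f^k$ behaves this way on $V$. So I would first fix $n \geq 1$, an open interval $U \subset X$, a connected component $V$ of $X \setminus \Delta^n$ (hence $V$ is itself an open interval), and an index $k \in \{0, 1, \ldots, n\}$, and prove by induction on $k$ that $f^k|_V$ is continuous and strictly monotonic on $V$.

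The case $k = 0$ is immediate. For the inductive step, suppose $f^k|_V$ is continuous and strictly monotonic for some $k \leq n - 1$. Then $f^k(V)$ is an interval, being the continuous image of the interval $V$. Since $f^{-k}(\Delta) \subset \Delta^n$ and $V \cap \Delta^n = \emptyset$, we have $f^k(V) \cap \Delta = \emptyset$; hence the connected set $f^k(V)$ is contained in a single connected component of $X \setminus \Delta$, on which $f$ is continuous and strictly monotonic (here we use that $f$ is piecewise strictly monotonic and continuous at the endpoints of $X$). Composing, $f^{k+1}|_V = f|_{f^k(V)} \circ f^k|_V$ is again continuous and strictly monotonic, which completes the induction. (Specializing to $k = n$ recovers the remark preceding the lemma: the connected components of $X \setminus \Delta^n$ are exactly the continuity pieces of $f^n$.)

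With the claim established, $f^k|_V \colon V \to X$ is a continuous, strictly monotonic — hence injective — map on an interval, so it is a homeomorphism onto its image $f^k(V)$, which is an interval. For $x \in V$ we have $x \in f^{-k}(U)$ if and only if $f^k(x) \in U$, so $f^{-k}(U) \cap V = (f^k|_V)^{-1}\!\big(U \cap f^k(V)\big)$. The set $U \cap f^k(V)$ is an intersection of two intervals, hence an interval (possibly empty), and its preimage under the homeomorphism $f^k|_V$ is again an interval (possibly empty), which is open in $V$ and therefore open in $X$. This yields the conclusion. There is no genuine obstacle here; the only points needing care are the inductive bookkeeping — checking that $f^k(V)$ always lands inside a single component of $X \setminus \Delta$, so that the next application of $f$ is continuous and strictly monotonic — and being explicit that strict monotonicity on an interval upgrades continuity to a homeomorphism onto the image. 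I would keep the write-up short accordingly.
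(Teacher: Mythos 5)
Your proposal is correct and follows essentially the same route as the paper, which simply asserts (without writing out the induction) that $f^k$ is continuous and strictly monotonic on each connected component of $X\setminus\Delta^n$ for $k\in\{0,\ldots,n\}$, and calls this ``a straightforward proof.'' Your inductive argument, using $f^{-k}(\Delta)\subset\Delta^n$ to keep $f^k(V)$ inside a single continuity piece, is precisely the justification the authors leave to the reader, and the concluding homeomorphism step is routine.
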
  

        \begin{remark}\label{DNXD}
            $\mathcal D^n$ is a collection of open pairwise disjoint intervals for every $n\geq 1$. In what follows, we prove that $\partial(\mathcal D^n) = \Delta^n$, which will allow us to deduce that $\mathcal D^n$ coincides with the collection of all connected components of $X \setminus \Delta^n$.
        \end{remark}
    
        \begin{lemma} \label{laweapelua}
            For every $n\geq 0$, $f^{-n}(\Delta)\subset\partial \big(f^{-n}(\mathcal D)\big) \subset \Delta^{n + 1}$.
        \end{lemma}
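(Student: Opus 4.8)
The plan is to establish the two inclusions separately, using that $\mathcal D = \{X_1,\ldots,X_N\}$ is a partition of $X$ into open intervals (up to closures) with $\partial\mathcal D = \Delta$, and that $f^{-n}(\mathcal D)$ is, by definition of the preimage of a collection, the family $\{f^{-n}(X_i) : 1\le i\le N\}$ with the empty sets removed. Throughout I would work with $U := \bigcup_{i=1}^N f^{-n}(X_i)$, whose topological boundary (in $X$) is precisely $\partial(f^{-n}(\mathcal D))$. The key structural fact I would invoke is that $f^n$ restricted to any connected component of $X\setminus\Delta^n$ is continuous and strictly monotonic (this is the observation preceding Lemma \ref{INTCAP}), so preimages of the $X_i$ under $f^n$ interact cleanly with that decomposition.

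For the first inclusion $f^{-n}(\Delta)\subset\partial(f^{-n}(\mathcal D))$: a point $c\in f^{-n}(\Delta)$ satisfies $f^n(c)\in\Delta$, so $f^n(c)\in\partial X_i$ for some $i$. I would take a small neighbourhood $W$ of $c$. If $c\notin\Delta^n$ (the generic case), then on the component of $X\setminus\Delta^n$ containing $c$ the map $f^n$ is a homeomorphism onto an open interval straddling $f^n(c)$, hence $W$ (shrunk to lie in that component) meets both $f^{-n}(X_i)\subset U$ and $f^{-n}(X\setminus\overline{X_i})$, which lies in $U$ as well or in $X\setminus U$ depending on adjacency — in any case one checks $W$ meets $U$ and, because $f^n(c)\in\partial X_i$ forces $f^n$ to leave $X_i$ on one side, $W$ meets $X\setminus U$ too (the sole exception being the endpoints of $X$, which are excluded from $\Delta$ by hypothesis). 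If instead $c\in\Delta^n$, then $c$ is itself a boundary point of the open set $U$ since every neighbourhood of $c$ meets one of the finitely many open intervals $f^{-n}(X_i)$ and also meets the finite set $\Delta^n\subset X\setminus U$; more carefully, $c\in\Delta^n$ means $c$ lies on the common boundary of two adjacent components of $X\setminus\Delta^n$, and on at least one side $f^n$ maps into the interior of some $X_i$, so $c\in\partial U$. Either way $c\in\partial(f^{-n}(\mathcal D))$.

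For the second inclusion $\partial(f^{-n}(\mathcal D))\subset\Delta^{n+1}$: take $x\in\partial U$. Then $x\notin\operatorname{int}U$ and $x\notin\operatorname{int}(X\setminus U)$. Now $X\setminus U = f^{-n}(X\setminus\bigcup_i X_i) = f^{-n}(\Delta)$ is finite, hence closed with empty interior, so the condition $x\notin\operatorname{int}(X\setminus U)$ is automatic and carries no information; the real content is $x\notin\operatorname{int}U$ together with $x\in\overline U$. I would argue: if $x\notin\Delta^n$, then $x$ lies in a component $V$ of $X\setminus\Delta^n$ on which $f^n$ is a strict-monotone homeomorphism onto an open interval $J$; then $U\cap V = f^{-n}(J\cap\bigcup_i X_i)\cap V$, and $x\notin\operatorname{int}U$ forces $f^n(x)$ to lie outside $\operatorname{int}\bigcup_i X_i$ relative to $J$, i.e. $f^n(x)\in\partial X_i = \Delta$ for some $i$ (using that $J$ is open), so $x\in f^{-n}(\Delta)\subset\Delta^{n+1}$. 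If on the other hand $x\in\Delta^n$, then trivially $x\in\Delta^n\subset\Delta^{n+1}$ since $\Delta^n=\bigcup_{j=0}^{n-1}f^{-j}(\Delta)\subset\bigcup_{j=0}^{n}f^{-j}(\Delta)=\Delta^{n+1}$. This exhausts the cases.

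The main obstacle I anticipate is the careful bookkeeping at points of $\Delta^n$ in both inclusions: there $f^n$ need not be locally a homeomorphism, continuity of $f^n$ may fail (jump discontinuities), and one must separate the genuine boundary behaviour from the artefact that $X\setminus U$ is a finite set with empty interior. The clean way around this is to absorb all such points into $\Delta^n\subset\Delta^{n+1}$ at the outset — i.e. reduce immediately to $x\notin\Delta^n$, where the strict monotonicity and openness of the image interval $J$ make the argument a routine one-dimensional topology check. The boundary convention that the endpoints of $X$ do not lie in $\Delta$ (stated in Section \ref{sec2}) should also be kept in mind so that endpoints of $X$ are never spuriously declared to be in $\partial(f^{-n}(\mathcal D))$.
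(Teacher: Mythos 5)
Your conclusion is correct, but the route is considerably heavier than it needs to be and, in one place, is actually wrong as stated. The paper's own proof uses only two elementary observations and requires no case analysis and no appeal to the monotonicity of $f^n$ on components of $X\setminus\Delta^n$:
\begin{itemize}
\item Writing $U:=\bigcup_{i=1}^N f^{-n}(X_i)$, one has $X\setminus U = f^{-n}(\Delta)$, which is a finite set. Hence every $y\in f^{-n}(\Delta)$ already lies in $X\setminus U$ (any neighbourhood of $y$ contains $y$, so meets $X\setminus U$), and by finiteness the complement $U=X\setminus f^{-n}(\Delta)$ is dense, so any neighbourhood of $y$ meets $U$ as well. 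That is the whole first inclusion.
\item $f^{-n}(\Delta)\subset\Delta^{n+1}$ by definition of $\Delta^{n+1}$. Hence if $y\notin\Delta^{n+1}$, a small open interval $V$ around $y$ inside the component of $X\setminus\Delta^{n+1}$ containing $y$ is disjoint from $f^{-n}(\Delta)=X\setminus U$, so $V\subset U$, so $y\notin\partial U$. That is the second inclusion.
\end{itemize}

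Two concrete issues in your writeup. First, in the subcase $c\notin\Delta^n$ of the first inclusion you argue that ``$f^n(c)\in\partial X_i$ forces $f^n$ to leave $X_i$ on one side, so $W$ meets $X\setminus U$''; but $f^n$ leaving $X_i$ only puts you into some other $X_j$, which is still inside $U$. The points of $X\setminus U=f^{-n}(\Delta)$ near $c$ form a finite set, and the one you can always use is $c$ itself — that is the observation your argument is circling around without stating. Second, in the subcase $c\in\Delta^n$ you assert $\Delta^n\subset X\setminus U$, i.e.\ $\Delta^n\subset f^{-n}(\Delta)$; this is false in general (already $\Delta\subset\Delta^n$, but $\Delta\subset f^{-n}(\Delta)$ would say every discontinuity is eventually mapped back onto a discontinuity). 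The correct and much simpler statement, again, is that $c\in f^{-n}(\Delta)=X\setminus U$ by hypothesis, which needs no case split at all. Your treatment of the second inclusion, and your remark that $x\notin\operatorname{int}(X\setminus U)$ is automatic because $f^{-n}(\Delta)$ is finite, are both sound, but the detour through $\Delta^n$ and Lemma \ref{INTCAP} duplicates work the set-theoretic identities already do for you.
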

        \begin{proof}
            Let $n \geq 0$ and $y \in f^{-n}(\Delta)$.
            This implies that $f^n(y) \in \Delta$. Also, let $U$ be an arbitrary neighbourhood of $y$. Then, it is clear that
            \begin{align*}
                U \cap \left(X \setminus \bigcup_{i = 1}^N f^{-n}(X_i)\right)
                = U \cap f^{-n}(\Delta) \neq \emptyset.
            \end{align*}
            Furthermore, from the density of $X \setminus f^{-n}(\Delta)$ in $X$, we deduce that
            \begin{align*}
                \emptyset \neq U \cap \big(X \setminus f^{-n}(\Delta)\big) = U \cap \left(\bigcup_{i = 1}^N f^{-n}(X_i)\right)\!.
            \end{align*}
            Therefore, we have that $y \in \partial \big(f^{-n}(\mathcal D)\big)$, which implies $f^{-n}(\Delta) \subset \partial \big(f^{-n}(\mathcal D)\big)$.\\[-2ex]
            
            For the second inclusion, let $y \notin \Delta^{n + 1}$. Therefore, $y$ belongs to a connected component of $X \setminus \Delta^{n + 1}$, which is an open interval. Let $\epsilon > 0$ be such that $V := (y - \epsilon, y + \epsilon)$ is completely contained in such connected component of $X \setminus \Delta^{n + 1}$. Therefore,
            \begin{align*}
                V \cap \left(X \setminus \bigcup_{i = 1}^N f^{-n}(X_i)\right) = V \cap f^{-n}(\Delta) = \emptyset,
            \end{align*}
            which proves that $y \notin \partial \big(f^{-n}(\mathcal D)\big)$.
        \end{proof}
    
        A consequence of this is the following result:
        \begin{lemma}\label{PARTIAL.FND}
            For every $n \geq 1$, $\partial(\mathcal D^n) = \Delta^n$.
        \end{lemma}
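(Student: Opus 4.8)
The plan is to reduce the statement to the elementary fact that an open dense subset of $X$ whose complement is finite has topological boundary (within $X$) equal to that complement. \textbf{Step 1: compute $\bigcup_{C\in\mathcal D^n}C$.} Every $C\in\mathcal D^n$ is, by the definition of the $\vee$-product, an intersection $\bigcap_{j=0}^{n-1}f^{-j}(X_{i_j})$ for some tuple $(i_0,\dots,i_{n-1})\in\{1,\dots,N\}^n$, the empty ones being discarded; discarding them does not change the union. Distributing intersection over union,
\[
  \bigcup_{C\in\mathcal D^n}C=\bigcap_{j=0}^{n-1}\Bigl(\bigcup_{i=1}^N f^{-j}(X_i)\Bigr)=\bigcap_{j=0}^{n-1}f^{-j}(X\setminus\Delta)=X\setminus\Delta^n,
\]
where I use that $X_1\cup\dots\cup X_N=X\setminus\Delta$ (a direct consequence of conditions (1)--(2) in the definition of a pc-map, since the $X_i$ are pairwise disjoint open intervals and $\partial X_i\cap X_{i'}=\emptyset$ for all $i,i'$) together with the definition $\Delta^n=\bigcup_{j=0}^{n-1}f^{-j}(\Delta)$.

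\textbf{Step 2: take boundaries.} By the definition of $\partial$ applied to a collection, $\partial(\mathcal D^n)=\partial\bigl(\bigcup_{C\in\mathcal D^n}C\bigr)=\partial(X\setminus\Delta^n)$, the boundary taken within $X$. Since $f$ has only finitely many critical points, $\Delta^n$ is a finite, hence closed, subset of $X$; and $X\setminus\Delta^n\supseteq\widetilde X$, which is dense in $X$. Thus $X\setminus\Delta^n$ is open and dense, so $\overline{X\setminus\Delta^n}=X$ and its interior is itself, whence
\[
  \partial(\mathcal D^n)=\overline{X\setminus\Delta^n}\setminus(X\setminus\Delta^n)=X\setminus(X\setminus\Delta^n)=\Delta^n,
\]
which is the claim.

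An equivalent route, matching the placement of this result right after Lemma \ref{laweapelua} and Remark \ref{DNXD}, is induction on $n$: writing $\mathcal D^n=\mathcal D^{n-1}\vee f^{-(n-1)}(\mathcal D)$, the union of the elements of $\mathcal D^n$ is $\bigl(\bigcup\mathcal D^{n-1}\bigr)\cap f^{-(n-1)}(X\setminus\Delta)$; since the boundary of a finite intersection of open sets is contained in the union of the boundaries, and Lemma \ref{laweapelua} together with the finiteness of $f^{-(n-1)}(\Delta)$ gives $\partial\bigl(f^{-(n-1)}(\mathcal D)\bigr)=f^{-(n-1)}(\Delta)$, one gets $\partial(\mathcal D^n)\subseteq\Delta^n$, while density of $\widetilde X$ yields the reverse inclusion exactly as in Step 2. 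Either way the proof is essentially bookkeeping; the only points needing a little care are the set-theoretic identity of Step 1 (in particular checking that deleting the empty intersections is harmless) and the invocation of the boundary-of-a-dense-open-set fact, so I do not anticipate a genuine obstacle.
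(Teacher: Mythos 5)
Your proof is correct, and the main route you take differs from the paper's: the paper argues by induction on $n$, writing $\mathcal D^{k+1}=\mathcal D^k\vee f^{-k}(\mathcal D)$ and using Lemmas \ref{INTCAP} and \ref{laweapelua} to identify $\partial\bigl(\mathcal D^{k+1}\bigr)=\partial\bigl(\mathcal D^k\bigr)\cup\partial\bigl(f^{-k}(\mathcal D)\bigr)$, whereas you compute the union $\bigcup_{C\in\mathcal D^n}C=X\setminus\Delta^n$ in closed form (by distributing intersection over union) and then invoke the elementary fact that an open dense subset of $X$ with finite closed complement has boundary equal to that complement. Both proofs ultimately rest on that same topological fact --- finite sets are closed, finite complements give open dense sets --- but your version is more self-contained, needing neither the auxiliary Lemma \ref{laweapelua} nor the induction machinery; the paper's framing has the mild advantage of reusing technical lemmas it has already set up. Your own ``equivalent route'' remark correctly anticipates the inductive argument the paper actually gives. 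The only delicate points are the ones you flag: the identity $X_1\cup\dots\cup X_N=X\setminus\Delta$ (which indeed follows from openness, pairwise disjointness, and $X=\bigcup\overline{X_i}$), and the harmlessness of dropping empty intersections from the $\vee$-product when forming the union --- both handled correctly.
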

        \begin{proof}
            We proceed by induction on $n$. First, if $n = 1$, the result follows. Next, let us assume that $\partial\big(\mathcal D^k\big) = \Delta^k$ for $k \geq 1$. Thus, by Lemmas \ref{INTCAP} and \ref{laweapelua}, in addition to the induction hypothesis, we have that $\mathcal D^{k + 1} = \mathcal D^k \vee f^{-k}(\mathcal D)$. Therefore,
            \begin{align*}
                 \partial \big(\mathcal D^{k + 1}\big) = \partial \big(\mathcal D^k\big) \cup \partial \big(f^{-k}(\mathcal D)\big) = \Delta^{k + 1},
            \end{align*}
            which concludes the proof.
        \end{proof}
    
        \begin{lemma}\label{CARDI.N}
            For every $n \geq 1$, $\aleph_{\widetilde X}(\mathcal D^n) = \#(\mathcal D^n)$.
        \end{lemma}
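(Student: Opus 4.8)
The plan is to prove the two inequalities $\aleph_{\widetilde X}(\mathcal D^n) \le \#(\mathcal D^n)$ and $\aleph_{\widetilde X}(\mathcal D^n) \ge \#(\mathcal D^n)$ separately. The first is immediate: by Lemma \ref{OPENCOVER} (or part (i) of Lemma \ref{FINITECOVERS}, since $X$ is $f$-invariant) the collection $\mathcal D^n$ is an $X$-open cover of $\widetilde X$, and since $\Delta^n$ is finite, $\mathcal D^n$ is finite; hence $\mathcal D^n$ itself is a finite sub-cover of $\widetilde X$ drawn from $\mathcal D^n$, which witnesses $\aleph_{\widetilde X}(\mathcal D^n) \le \#(\mathcal D^n) < \infty$.

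For the reverse inequality the point is that $\mathcal D^n$ admits no proper sub-cover of $\widetilde X$. By Remark \ref{DNXD} together with Lemma \ref{PARTIAL.FND} (which gives $\partial(\mathcal D^n) = \Delta^n$), the elements of $\mathcal D^n$ are precisely the connected components of $X \setminus \Delta^n$; in particular they are non-empty, pairwise disjoint open intervals. Since $\widetilde X$ is dense in $X$ (as noted just after Lemma \ref{Xtilde.F}), every such interval $U \in \mathcal D^n$ contains a point $x_U \in \widetilde X$. Because the elements of $\mathcal D^n$ are pairwise disjoint, $x_U$ lies in no member of $\mathcal D^n$ other than $U$; therefore any sub-collection $\mathcal D' \subset \mathcal D^n$ with $U \notin \mathcal D'$ fails to cover $x_U$, hence fails to cover $\widetilde X$. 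Consequently the only sub-cover of $\widetilde X$ contained in $\mathcal D^n$ is $\mathcal D^n$ itself, which forces $\aleph_{\widetilde X}(\mathcal D^n) \ge \#(\mathcal D^n)$.

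Combining the two inequalities gives $\aleph_{\widetilde X}(\mathcal D^n) = \#(\mathcal D^n)$. There is essentially no technical obstacle; the only care needed is in invoking the correct earlier facts, namely that $\mathcal D^n$ is genuinely the decomposition of $X$ into the components of $X \setminus \Delta^n$ (so that pairwise disjointness is available) and that $\widetilde X$ meets every non-empty open subinterval of $X$ (which is exactly the density of $\widetilde X$).
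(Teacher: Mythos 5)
Your proof is correct and follows essentially the same approach as the paper, which simply observes that the result is a straightforward consequence of $\mathcal D^n$ being a pairwise disjoint collection of intervals. You have spelled out the details the paper leaves implicit—namely that density of $\widetilde X$ guarantees each interval contains a point of $\widetilde X$, so no proper sub-collection can cover it.
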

    
        \begin{proof}
            This is a straightforward consequence of the fact that $\mathcal D^n$ is a pairwise disjoint collection of intervals, for every $n \geq 1$.
        \end{proof}
    
        We recall that, for every $n \geq 1$, $c_n \geq 1$ is the smallest number of intervals on which $f^n$ is monotonic and essentially continuous (see \eqref{Emily}). Since $f$ is piecewise strictly monotonic and the map does not allow for removable discontinuities, it follows that $c_n$ coincides with the cardinality of the collection of all connected components of $X \setminus \Delta^n$; that is, $c_n = \#(\mathcal D^n)$. With this, we prove the first part of Theorem \ref{main2}.

        \begin{lemma} \label{CEPSILON}
        The following inequality holds:
        \[
            h_{top}(f) \geq \lim_{n \to \infty} \frac{1}{n} \log c_n.
        \] 
        \end{lemma}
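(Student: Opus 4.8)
The plan is to exploit the special cover $\mathcal D = \{X_1,\dots,X_N\}$ and the identity $c_n = \#(\mathcal D^n)$ established just before the statement, together with Lemma~\ref{CARDI.N}, which tells us that $\aleph_{\widetilde X}(\mathcal D^n) = \#(\mathcal D^n) = c_n$ for every $n \geq 1$. From these facts the chain of reasoning is short. First I would recall that $\mathcal D$ is an $X$-open cover (by definition, each $X_i$ is open in $X$), so the quantity $h_{top}(f,\mathcal D)$ is well-defined and, by Definition~\ref{ENTROPY.DEF}, satisfies $h_{top}(f,\mathcal D) \leq h_{top}(f)$. Second, by the very definition of entropy with respect to a cover in \eqref{HTOPC},
\[
    h_{top}(f,\mathcal D) = \lim_{n\to\infty} \frac{\log \aleph_{\widetilde X}(\mathcal D^n)}{n}.
\]
Combining this with Lemma~\ref{CARDI.N} gives $h_{top}(f,\mathcal D) = \lim_{n\to\infty} \frac{1}{n}\log \#(\mathcal D^n) = \lim_{n\to\infty}\frac{1}{n}\log c_n$, where the last equality is the identity $c_n = \#(\mathcal D^n)$ noted in the text (valid because $f$ is piecewise strictly monotonic with no removable discontinuities). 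Chaining the two displayed facts yields $h_{top}(f) \geq h_{top}(f,\mathcal D) = \lim_{n\to\infty}\frac1n\log c_n$, which is exactly the claimed inequality.

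One small point that deserves a sentence of care is the existence of the limit $\lim_{n\to\infty}\frac1n\log c_n$: this follows because $\{\log c_n\}$ — equivalently $\{\log \aleph_{\widetilde X}(\mathcal D^n)\}$ — is subadditive by Lemma~\ref{COVERENTROPY} applied with $A = X$ and $\mathcal C = \mathcal D$, so Fekete's lemma applies and the limit equals the infimum of $\frac1n \log c_n$. (Alternatively one can observe directly that $c_{n+k} \leq c_n c_k$ from the monotonicity-pieces description, but routing it through Lemma~\ref{COVERENTROPY} keeps the argument self-contained within the paper's framework.)

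I do not anticipate a serious obstacle here; the content of this lemma is essentially bookkeeping once Lemma~\ref{CARDI.N} and the identity $c_n = \#(\mathcal D^n)$ are in place. The only thing to be mildly careful about is making sure that $\mathcal D$ genuinely qualifies as an admissible $X$-open cover in the sense of Definition~\ref{ENTROPY.DEF} — i.e., that $X_1$ and $X_N$ being semi-open in $X$ does not cause trouble — but this is exactly why the cover is built from the continuity pieces, each of which is open in the subspace topology of $X$ by hypothesis, so $\aleph_{\widetilde X}(\mathcal D) \leq \#\mathcal D = N < \infty$ and the supremum defining $h_{top}(f)$ legitimately includes $h_{top}(f,\mathcal D)$. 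The reverse inequality $h_{top}(f) \leq \lim_{n}\frac1n\log c_n$, completing Theorem~\ref{main2}, is deferred to Subsection~\ref{LORENA} and is the genuinely harder direction, since it requires comparing an arbitrary $X$-open cover against the natural one.
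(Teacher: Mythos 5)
Your proof hinges on the claim that, by Definition~\ref{ENTROPY.DEF}, $h_{top}(f,\mathcal D)\leq h_{top}(f)$ because $\mathcal D$ is an admissible $X$-open cover. That step fails: in the paper's conventions an $X$-open cover (with no further qualification) is an $X$-open cover \emph{of $X$}, i.e.\ a collection of open subsets of $X$ whose union contains all of $X$. But $\bigcup_{i=1}^N X_i = X\setminus\Delta$, so $\mathcal D=\{X_1,\dots,X_N\}$ fails to cover the finitely many points of $\Delta$ whenever $\Delta\neq\emptyset$. This is exactly why the paper introduces $\mathcal D$ as the ``natural $X$-open cover \emph{of $\widetilde X$}'' rather than of $X$. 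Your aside about $X_1,X_N$ being semi-open correctly identifies that each $X_i$ \emph{is} open in the subspace topology, and your observation that $\aleph_{\widetilde X}(\mathcal D)\leq N<\infty$ is also correct (it ensures $h_{top}(f,\mathcal D)$ is well-defined by the preamble to \eqref{HTOPC}); but neither fact makes $\mathcal D$ a cover of $X$, which is what membership in the supremum of Definition~\ref{ENTROPY.DEF} requires.

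The missing inequality $h_{top}(f,\mathcal D)\leq h_{top}(f)$ is not bookkeeping---it is essentially the whole content of Lemma~\ref{CEPSILON}, and assuming it as immediate is circular. The paper supplies it by patching $\mathcal D$ into genuine $X$-open covers $\mathcal C_\epsilon := \mathcal D\cup\big\{(d-\epsilon,d+\epsilon)\cap X : d\in\Delta\big\}$ for $\epsilon>0$, so that $h_{top}(f,\mathcal C_\epsilon)\leq h_{top}(f)$ holds legitimately, and then using the monotone dependence of $\aleph_{\widetilde X}(\mathcal C_\epsilon^n)$ on $\epsilon$ (via $\mathcal C_\xi\preccurlyeq\mathcal C_\eta\preccurlyeq\mathcal D$ for $\xi>\eta\geq 0$) to push $\epsilon\to 0$ and recover $h_{top}(f,\mathcal D)=\lim_n\frac1n\log\aleph_{\widetilde X}(\mathcal D^n)$, which Lemma~\ref{CARDI.N} identifies with $\lim_n\frac1n\log c_n$. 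The subadditivity remark and the appeal to Lemma~\ref{CARDI.N} in your write-up are fine; it is the approximation-by-honest-$X$-open-covers step that cannot be omitted.
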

        \begin{proof}
            For each $\epsilon > 0$, denote by $\mathcal C_\epsilon$ the open cover of $X$ given by $\mathcal D \cup \mathcal D_\epsilon$, where
            \begin{align*}
                \mathcal D_\epsilon := \bigcup_{d\in\Delta}\big\{(d - \epsilon, d + \epsilon)\cap X\big\}.
            \end{align*}
            Naturally, it is convenient to denote $\mathcal C^n_0:= \mathcal D^n$ for every positive integer $n \geq 1$.
            
            Next, if $\mathbb N$ denotes the set of positive integers starting at $1$, let $a: [0, \infty) \times \mathbb N \to [0, \infty)$ be the function given by
            $$
                a(\epsilon, n):= \frac{\log \aleph_{\widetilde X}(\mathcal C_\epsilon^n)}{n} \qquad \forall \,(\epsilon, n) \in [0, \infty) \times \mathbb N.
            $$
            Now, note that for every $\xi > \eta \geq 0$, we have that $\mathcal C_\xi \preccurlyeq \mathcal C_\eta \preccurlyeq \mathcal C_0 = \mathcal D$, which implies that $a(\xi, n) \leq a(\eta, n) \leq a(0, n)$ for all $n \geq 1$. In particular, this lets us ensure that $\lim \limits_{n \to \infty} a(\xi, n) \leq \lim\limits_{n \to \infty} a(\eta, n)$. Next, we define $g: [0, \infty) \to [0, \infty)$ by
            \begin{align*}
                g(\epsilon):= \lim_{n \to \infty} a(\epsilon, n) \qquad \forall \, \epsilon \geq 0.
            \end{align*}
            This function is clearly well-defined. Also, $g$ is a decreasing map and, therefore, $\sup \limits_{\epsilon \geq 0} \{g(\epsilon)\} = g(0)$. Thus, we have that\\[-2ex]
            \begin{align}
                h_{top}(f) \geq \sup_{\epsilon \geq 0}\!\big\{h_{top}(f, \mathcal C_\epsilon)\big\} = g(0) = \lim_{n \to \infty} \frac{\log \aleph_{\widetilde X}(\mathcal D^n)}{n}. \label{EDGARDO}
            \end{align}
            Finally, from Lemma \ref{CARDI.N}, we have that $c_n = \#(\mathcal D^n) = \aleph_{\widetilde X}(\mathcal D^n)$ for every $n \geq 1$. Therefore,
            \begin{equation}
                \lim_{n \to \infty} \frac{\log \aleph_{\widetilde X}(\mathcal D^n)}{n} = \lim_{n \to \infty} \frac{\log c_n}{n}. \label{DANA}
            \end{equation}
            Thus, from \eqref{EDGARDO} and \eqref{DANA}, we conclude the result.
        \end{proof}

    \subsection{Entropy conditioned to a cover and conditional entropy} \label{LORENA}
        The ideas and definitions in this sub-section are adapted versions of what is defined in \cite{M76,MS80} to our context.
        
        Let $\mathcal B, \mathcal C$ be two covers of $\widetilde X$ such that $\#\mathcal B < \infty$ and $\aleph_{\widetilde X}(\mathcal C) < \infty$. Then, we can define
        \[
            \aleph_{\widetilde X}(\mathcal C | \mathcal B) := \max_{B \in \mathcal B} \aleph_{B \cap \widetilde X}(\mathcal C),
        \]
        where, by definition, we have that $\aleph_\emptyset(\mathcal C) = 1$. In particular, note that $\aleph_{\widetilde X}(\mathcal C | \{X\}) = \aleph_{\widetilde X}(\mathcal C)$. Also, if $\mathcal C$ is an $X$-open cover, then $\aleph_{\widetilde X}(\mathcal C | \mathcal B)$ is well-defined for every finite cover $\mathcal B$ of $\widetilde X$. From Remark \ref{LORC.NOTOPEN}, we deduce that $\aleph_{\widetilde X}(\mathcal C^n) < \infty$ for every $n \geq 1$. Therefore, we can also define
        \begin{align}
            h_{top}(f; \mathcal C | \mathcal B) &:= \lim_{n \to \infty} \frac{\log \aleph_{\widetilde X} (\mathcal C^n | \mathcal B^n)}{n}. \label{COND.COND}
        \end{align}
        Finally, the {\em topological entropy of $f$ conditioned to $\mathcal B$} is given by:
        \begin{align}
            h_{top}(f | \mathcal B) &:= \sup\big\{ h_{top}(f; \mathcal C | \mathcal B)\ :\ \text{$\mathcal C$ is an $X$-open cover}\big\}. \label{COND.ENTROPY}
        \end{align}
        By an argument similar to the one given right after Definition \ref{ENTROPY.DEF}, as $X$ is compact, it is possible to take this supremum over all finite $X$-open covers. The following result will be used to prove a key inequality (see \eqref{FRANCISCA}), which will be used to conclude Theorem \ref{main3}.
        
        \begin{lemma}\label{COVER.B.N.}
            $\aleph_{\widetilde X}((\mathcal B \vee \mathcal C) | \mathcal E) \leq \aleph_{\widetilde X}(\mathcal B|\mathcal E) \cdot \aleph_{\widetilde X}(\mathcal C | (\mathcal B \vee \mathcal E))$ for every $\mathcal B, \mathcal C, \mathcal E$ covers of $\widetilde X$ such that $\aleph_{\widetilde X}(\mathcal C) < \infty$ and $\max\{\#\mathcal B, \#\mathcal E\} < \infty$. 
        \end{lemma}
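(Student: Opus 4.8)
The claim is a "conditional sub-multiplicativity" inequality for the minimal cardinality functional, mirroring the classical identity $H(\mathcal B \vee \mathcal C \mid \mathcal E) = H(\mathcal B \mid \mathcal E) + H(\mathcal C \mid \mathcal B \vee \mathcal E)$ from measure-theoretic entropy, but now in the topological/combinatorial setting of covers of $\widetilde X$. The strategy is to unwind the definition $\aleph_{\widetilde X}((\mathcal B\vee\mathcal C)\mid\mathcal E) = \max_{E\in\mathcal E}\aleph_{E\cap\widetilde X}(\mathcal B\vee\mathcal C)$ and, for a fixed $E\in\mathcal E$ realizing this maximum, build an explicit sub-cover of $E\cap\widetilde X$ by elements of $\mathcal B\vee\mathcal C$ whose size is bounded by the product on the right-hand side.

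**Key steps.** First I would fix $E\in\mathcal E$ with $\aleph_{E\cap\widetilde X}(\mathcal B\vee\mathcal C) = \aleph_{\widetilde X}((\mathcal B\vee\mathcal C)\mid\mathcal E)$. Next, choose a minimal sub-collection $\mathcal B'\subset\mathcal B$ with $\#\mathcal B' = \aleph_{E\cap\widetilde X}(\mathcal B)$ that covers $E\cap\widetilde X$; note $\#\mathcal B'\le\aleph_{\widetilde X}(\mathcal B\mid\mathcal E)$ since $E\in\mathcal E$. Then, for each $B\in\mathcal B'$, the set $(B\cap E)\cap\widetilde X$ is contained in some element of $\mathcal B\vee\mathcal E$ (namely $B\cap E$ if nonempty), so $\aleph_{(B\cap E)\cap\widetilde X}(\mathcal C) \le \aleph_{\widetilde X}(\mathcal C\mid(\mathcal B\vee\mathcal E))$; pick a minimal sub-collection $\mathcal C_B\subset\mathcal C$ of that size covering $(B\cap E)\cap\widetilde X$. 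The crucial observation is then that the collection
\[
    \{B\cap C \ :\ B\in\mathcal B',\ C\in\mathcal C_B\}\setminus\{\emptyset\}
\]
is a sub-collection of $\mathcal B\vee\mathcal C$ (by Remark \ref{VARIOS}, part (a), or directly from the definition of the $\vee$-product), it covers $E\cap\widetilde X$ (since the $B$'s cover $E\cap\widetilde X$ and within each $B$ the $C$'s in $\mathcal C_B$ cover $B\cap E\cap\widetilde X$), and its cardinality is at most $\sum_{B\in\mathcal B'}\#\mathcal C_B \le \#\mathcal B'\cdot\max_B\#\mathcal C_B \le \aleph_{\widetilde X}(\mathcal B\mid\mathcal E)\cdot\aleph_{\widetilde X}(\mathcal C\mid(\mathcal B\vee\mathcal E))$. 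Taking the infimum over sub-covers of $E\cap\widetilde X$ gives the bound for that particular $E$, and since $E$ was chosen to realize the maximum, the inequality follows.

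**Finiteness and well-definedness.** A preliminary remark is needed to ensure every quantity in sight is finite so the arithmetic makes sense: since $\aleph_{\widetilde X}(\mathcal C)<\infty$ and $\#\mathcal B,\#\mathcal E<\infty$, one checks that $\mathcal B\vee\mathcal C$ admits a finite sub-cover of $\widetilde X$ (hence of $E\cap\widetilde X$), that $\mathcal B\vee\mathcal E$ is a finite cover, and that $\aleph_{\widetilde X}(\mathcal C\mid(\mathcal B\vee\mathcal E))$ is a finite maximum over the finitely many elements of $\mathcal B\vee\mathcal E$; the degenerate convention $\aleph_\emptyset(\mathcal C)=1$ handles any empty intersections that arise. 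One should also be careful that "covering $E\cap\widetilde X$" is the relevant notion throughout (not covering $E$ itself), which is exactly what the definition of $\aleph_{E\cap\widetilde X}$ demands.

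**Main obstacle.** The only real subtlety is bookkeeping: verifying that the product collection $\{B\cap C\}$ genuinely lies inside $\mathcal B\vee\mathcal C$ rather than a coarser object, and that passing from "$\mathcal B'$ covers $E\cap\widetilde X$" plus "$\mathcal C_B$ covers $(B\cap E)\cap\widetilde X$ for each $B$" to "$\{B\cap C\}$ covers $E\cap\widetilde X$" is airtight when intersections with $\widetilde X$ and with $E$ are taken in the right order. There is no analytic difficulty; the argument is a direct translation of the classical conditional-entropy inequality, and the sub-multiplicative (rather than sub-additive) form is automatic because we are bounding cardinalities before taking logarithms.
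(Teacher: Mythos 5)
Your proposal is correct and follows essentially the same two-level nested-cover argument as the paper: for each $E\in\mathcal E$, cover $E\cap\widetilde X$ by a small subfamily of $\mathcal B$ of size at most $\aleph_{\widetilde X}(\mathcal B\mid\mathcal E)$, cover each resulting piece $(B\cap E)\cap\widetilde X$ by a small subfamily $\mathcal C_B\subset\mathcal C$ of size at most $\aleph_{\widetilde X}(\mathcal C\mid(\mathcal B\vee\mathcal E))$, and form the product subfamily of $\mathcal B\vee\mathcal C$. You are, if anything, slightly more careful than the paper's own write-up in consistently intersecting with $\widetilde X$ at every step.
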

        \begin{proof}
            Let $E \in \mathcal E$. Then, there exists a sub-collection $\mathcal M \subset \mathcal B$ such that
            \[
                E \subset \bigcup_{M \in \mathcal M} M \qquad \text{and} \qquad \#\mathcal M \leq \aleph_{\widetilde X}(\mathcal B|\mathcal E).
            \]
            Now, note that for every $M \in \mathcal M$, there exists a finite collection $\mathcal C_M \subset \mathcal C$ such that
            \[
                E \cap M \subset \bigcup_{C \in \mathcal C_M} C \qquad \text{and} \qquad \#\mathcal C_M \leq \aleph_{\widetilde X}(\mathcal C|(\mathcal B \vee \mathcal E)).
            \]
            Now, observe that $\mathcal F := \{M \cap C\ : \ M \in \mathcal M, \; C \in \mathcal C_M\}$ is a finite subfamily of $\mathcal B \vee \mathcal C$ satisfying $\aleph_E (\mathcal B \vee \mathcal C) \leq \# \mathcal F \leq \aleph_{\widetilde X}(\mathcal B | \mathcal E) \cdot \aleph_{\widetilde X}(\mathcal C | (\mathcal B \vee \mathcal E))$, which implies the result.
        \end{proof}
        
        \begin{remark}\label{REMARK.TECH}
            Taking $\mathcal E = \{X\}$, from part (a) of Lemma \ref{NPROPERTIES} and Lemma \ref{COVER.B.N.}, we deduce that
            \begin{equation}
                \aleph_{\widetilde X}(\mathcal C^n) \leq \aleph_{\widetilde X}(\mathcal B^n \vee \mathcal C^n) \leq \aleph_{\widetilde X}(\mathcal B^n) \cdot \aleph_{\widetilde X}(\mathcal C^n|\mathcal B^n), \label{NNN}
            \end{equation}
            for every $n \geq 1$, and $\mathcal B, \mathcal C$ covers of $\widetilde X$ such that $\#\mathcal B < \infty$ and $\aleph_{\widetilde X}(\mathcal C) < \infty$. Moreover, taking the logarithm and dividing by $n$ in \eqref{NNN}, as well as taking the supremum over all $X$-open covers $\mathcal C$, it follows that
            \begin{equation}
                h_{top}(f) \leq h_{top}(f, \mathcal B) + h_{top}(f | \mathcal B). \label{FRANCISCA}
            \end{equation}
        \end{remark}

        To prove the following result, we recall that $\mathcal D$ is the {\emph natural $X$-open cover} given by $\mathcal D = \{X_1, \ldots, X_N\}$.
        
        \begin{lemma}\label{TECH.C.E}
            $h_{top}(f, \mathcal C|\mathcal D) = 0$ for every finite $X$-open cover $\mathcal C$.
        \end{lemma}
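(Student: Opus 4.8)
The plan is to show that the conditional counting function $\aleph_{\widetilde X}(\mathcal C^n \mid \mathcal D^n)$ stays bounded as $n \to \infty$, which immediately forces $h_{top}(f, \mathcal C \mid \mathcal D) = 0$ after taking $\frac1n \log(\cdot)$. By definition, $\aleph_{\widetilde X}(\mathcal C^n \mid \mathcal D^n) = \max_{D \in \mathcal D^n} \aleph_{D \cap \widetilde X}(\mathcal C^n)$, so it suffices to bound, uniformly in $n$ and in the choice of the component $D \in \mathcal D^n$, the number of elements of $\mathcal C^n$ needed to cover $D \cap \widetilde X$. The key structural fact, supplied by Remark \ref{DNXD} and Lemma \ref{PARTIAL.FND}, is that $D$ is a connected component of $X \setminus \Delta^n$, i.e. an open interval on which $f^k$ is continuous and strictly monotonic for every $k \in \{0, \ldots, n\}$.

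First I would fix a finite $X$-open cover $\mathcal C$ (legitimate by compactness of $X$) and write $\mathcal C^n = \bigvee_{j=0}^{n-1} f^{-j}(\mathcal C \setminus \Delta)$. The elements of $\mathcal C^n$ that meet $D$ are intersections $\bigcap_{j=0}^{n-1} f^{-j}(C_j) \cap D$ with $C_j \in \mathcal C$; by Lemma \ref{INTCAP}, each set $f^{-j}(C_j) \cap D$ is either empty or an open subinterval of $D$, so each nonempty element of $\mathcal C^n$ meeting $D$ is itself an open subinterval of $D$. Hence covering $D \cap \widetilde X$ with elements of $\mathcal C^n$ is the problem of covering an interval (minus a finite set) by subintervals drawn from this family, and the minimal number needed is controlled by how the endpoints of these subintervals partition $D$. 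The endpoints arise from the finitely many "edges" $f^{-j}(\partial C)$, $C \in \mathcal C$, $0 \le j \le n-1$, intersected with $D$; but on $D$ each $f^{-j}$ is monotonic, so for a fixed $C$ the set $f^{-j}(\partial C) \cap D$ has at most $\#\partial C$ points, and more importantly the combinatorial type is rigid. The cleanest route is: on the interval $D$, consider the common refinement of the pullbacks under $f^0, \ldots, f^{n-1}$ of the cover $\mathcal C$; since $\mathcal C$ is a cover of the interval $X$ by open sets, one can always extract an ordered subcover of $X$ of size at most $\#\mathcal C$ whose successive members overlap, and pulling this structure back along the monotone maps $f^j|_D$ shows that $D \cap \widetilde X$ is covered by at most $\#\mathcal C$ (or a constant depending only on $\mathcal C$, e.g. $2\#\mathcal C$) elements of $\mathcal C^n$, uniformly in $n$ and $D$.

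Therefore $\aleph_{\widetilde X}(\mathcal C^n \mid \mathcal D^n) \le M$ for a constant $M = M(\mathcal C)$ independent of $n$, whence
\[
    h_{top}(f, \mathcal C \mid \mathcal D) = \lim_{n \to \infty} \frac{\log \aleph_{\widetilde X}(\mathcal C^n \mid \mathcal D^n)}{n} \le \lim_{n \to \infty} \frac{\log M}{n} = 0,
\]
and since $h_{top}(f, \mathcal C \mid \mathcal D) \ge 0$ always, equality holds. As $\mathcal C$ was an arbitrary finite $X$-open cover, the lemma follows.

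The main obstacle I anticipate is making the "uniform bound on one interval" step fully rigorous: one must argue that restricting a finite open cover of $X$ to a subinterval and pulling back through $n$ successive monotone branches does not cause the number of required pieces to grow with $n$. The point is that monotonicity makes $f^{-j}(C) \cap D$ a single interval rather than many, so the pullback of an ordered overlapping subcover of $X$ remains an ordered overlapping family on $D$ of the same cardinality; a short induction on $n$ (splitting $\mathcal C^{n} = \mathcal C^{n-1} \vee f^{-(n-1)}(\mathcal C \setminus \Delta)$ and using that $f^{-(n-1)}$ is monotone on $D$) formalizes this. Everything else is bookkeeping with Lemmas \ref{INTCAP}, \ref{PARTIAL.FND}, and \ref{NPROPERTIES}.
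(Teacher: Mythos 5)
Your overall strategy is the same as the paper's: bound $\aleph_{\widetilde X}(\mathcal C^n \mid \mathcal D^n)$ well enough that $\tfrac{1}{n}\log(\cdot) \to 0$, using that each $D \in \mathcal D^n$ is a monotone interval and that $f^{-j}(I(C)) \cap D$ is an interval. The gap is in the strength of the bound you claim. You assert that $\aleph_{D\cap\widetilde X}(\mathcal C^n) \leq M(\mathcal C)$ uniformly in $n$, by pulling back an ``ordered overlapping subcover'' through each $f^j|_D$. But pulling back $\mathcal C$ produces $n$ \emph{separate} overlapping covers of $D$, one for each $j$, and the elements of $\mathcal C^n$ you actually have available are intersections across all $j$. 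The overlap regions of $f^{-j}(\mathcal C) \cap D$ can drift across $D$ as $j$ increases (for instance if $f$ sends the overlap strip of $\mathcal C$ outside itself), so the $n$ overlapping covers need not share a common overlap, and the only $n$-fold intersections that survive form a chain of intervals whose cardinality grows with $n$. Your proposed induction ``$\mathcal C^n = \mathcal C^{n-1} \vee f^{-(n-1)}(\mathcal C\setminus\Delta)$'' does not close: an element of a size-$M$ cover of $D$ by $\mathcal C^{n-1}$ may need to be subdivided to land inside elements of $\mathcal C^n$, so the count is not preserved.

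What actually works — and what the paper does — is to settle for a bound that grows \emph{subexponentially} in $n$. Concretely, the endpoints of the intervals $f^{-k}(I(C)) \cap D$, taken over $C \in \mathcal C$ and $0 \le k \le n-1$, number at most $2n\#\mathcal C$; every element of $\mathcal C^n$ meeting $D$ sits inside an interval determined by two such endpoints (plus the two endpoints of $D$), giving $\#\{E \in \mathcal C^n : E \cap D \neq \emptyset\} \le 4(2n\#\mathcal C)^2$ and hence $\aleph_{\widetilde X}(\mathcal C^n\mid\mathcal D^n) \le 16 n^2 (\#\mathcal C)^2$. Then $\tfrac{1}{n}\log(16 n^2(\#\mathcal C)^2) \to 0$. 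You should replace your constant-bound claim with a polynomial-in-$n$ bound of this type; that is both achievable by the Lemma~\ref{INTCAP} endpoint-counting you already invoke and sufficient for the conclusion.
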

        \begin{proof}
            Fix $n \geq 1$ and $D \in \mathcal D^n$. For $k \in \{1, \ldots , n\}$, and $C \in \mathcal C$, we define
            \begin{equation}
                D \cap f^{- k}\big(I(C)\big) = (f|_D)^{-k} \big(I(C)\big), \label{JOSEFA}
            \end{equation}
            where $I(C):= \big(\inf C, \sup C\big)$. From Lemma \ref{INTCAP}, the set defined by \eqref{JOSEFA} is either empty or an open interval contained in a connected component of $X \setminus \Delta^n$. Then, one of the following four cases occurs:\\[-2ex]
            \begin{itemize}
                \item $\inf f^{-k}\big(I(C)\big), \sup f^{-k}\big(I(C)\big) \in D$;
                \item $\inf f^{-k}\big(I(C)\big) < \inf D$ and $\sup f^{-k}\big(I(C)\big) \in D$;
                \item $\inf f^{-k}\big(I(C)\big) \in D$ and $\sup D < \sup f^{-k}\big(I(C)\big)$;
                \item $\inf f^{-k}\big(I(C)\big), \sup f^{-k}\big(I(C)\big) \notin D$.
            \end{itemize}
            
            \vspace*{1ex}
            In this way, $D$ contains at most two end-points of $I\big(f^{-k}\big(I(C)\big)\big)$. Therefore, we deduce that $D$ has at most $2n(\#\mathcal C)$ different end-points of the open intervals in 
            \[
                \mathcal E_n:= \big\{I\big(f^{-k}\big(I(C)\big)\big)\ :\ C \in \mathcal C, \, \; 0 \leq k \leq n - 1\big\}, \qquad \text{where} \quad I\big(f^{0}\big(I(C)\big)\big) = I(C).
            \]
            Since the elements of \(\mathcal C^n\) correspond to intersections of pre-images of elements in \(\mathcal C\), we can use the endpoints of intervals in \(\mathcal E_n\) to bound the number of elements in \(\mathcal C^n\) that intersect \(D\). In particular, as there are 4 possibilities in which an interval can intersect $D$, and each element in \(\mathcal C^n\) is contained in an interval whose endpoints can be chosen from the (at most) $2 n (\#\mathcal C)$ elements belonging to \(D\); that is,
            \begin{equation}
                \#\big\{E\in\mathcal C^n\ :\ E\cap D\neq \emptyset\big\}\leq 4 (2 n \# \mathcal C)^2. \label{INEQ.D1}
            \end{equation}
            Thus, from \eqref{INEQ.D1} we deduce 
            \begin{align*}
                h_{top}(f; \mathcal C|\mathcal D) &= \lim_{n \to \infty} \frac{\log \aleph_{\widetilde X} (\mathcal C^n|\mathcal D^n)}{n} \leq \lim_{n \to \infty} \frac{\log\big(16 n^2 (\#\mathcal C)^2\big)}{n} = 0.
            \end{align*}
        \end{proof}

        With this, to close this sub-section we prove a reformulation of the Misiurewicz-Szlenk formula established in Theorem \ref{main2}.
    
        \begin{theorem} \label{HTOP.LEQ}
            The following equality holds:   
            \[
                h_{top}(f) = h_{top}(f, \mathcal D) = \lim_{n \to \infty} \frac{\log c_n}{n}.
            \]
        \end{theorem}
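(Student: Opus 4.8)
The plan is to sandwich $h_{top}(f)$ between $h_{top}(f,\mathcal D)$ and itself: on one side $\mathcal D$ is an admissible $X$-open cover, so $h_{top}(f)\ge h_{top}(f,\mathcal D)$ is free; on the other side I would invoke the decomposition inequality \eqref{FRANCISCA} with $\mathcal B=\mathcal D$ together with the vanishing of the conditional entropy relative to the natural cover (Lemma \ref{TECH.C.E}) to get the reverse inequality. Along the way the value $h_{top}(f,\mathcal D)$ must be identified with $\lim_{n\to\infty}\tfrac1n\log c_n$.

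First I would record that $h_{top}(f,\mathcal D)=\lim_{n\to\infty}\tfrac1n\log c_n$. By Definition \ref{ENTROPY.DEF} (with $A=X$) the limit $h_{top}(f,\mathcal D)=\lim_{n\to\infty}\tfrac1n\log\aleph_{\widetilde X}(\mathcal D^n)$ exists, Lemma \ref{CARDI.N} gives $\aleph_{\widetilde X}(\mathcal D^n)=\#(\mathcal D^n)$, and, since $f$ is piecewise strictly monotonic with no removable discontinuities, the connected components of $X\setminus\Delta^n$ are exactly the maximal monotonicity pieces of $f^n$, whence $\#(\mathcal D^n)=c_n$ (this is the identification recorded just before Lemma \ref{CEPSILON}, resting on Remark \ref{DNXD} and Lemma \ref{PARTIAL.FND}); indeed this equality, and the bound $h_{top}(f)\ge\lim_{n\to\infty}\tfrac1n\log c_n$, are already contained in Lemma \ref{CEPSILON}. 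It then remains to prove $h_{top}(f)\le h_{top}(f,\mathcal D)$. For this I would apply \eqref{FRANCISCA} with $\mathcal B=\mathcal D$ to obtain
\[
    h_{top}(f)\;\le\;h_{top}(f,\mathcal D)+h_{top}(f\mid\mathcal D),
\]
and then show $h_{top}(f\mid\mathcal D)=0$: by Lemma \ref{TECH.C.E}, $h_{top}(f;\mathcal C\mid\mathcal D)=0$ for every finite $X$-open cover $\mathcal C$, and, as noted right after \eqref{COND.ENTROPY}, the supremum defining $h_{top}(f\mid\mathcal D)$ may be restricted to finite $X$-open covers, so it equals $0$. Combining the two inequalities yields $h_{top}(f)=h_{top}(f,\mathcal D)=\lim_{n\to\infty}\tfrac1n\log c_n$.

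The conceptual heart of the argument is Lemma \ref{TECH.C.E}: once the continuity pieces have been resolved by $\mathcal D$, any further refinement contributes nothing to the exponential growth rate, because inside each component of $X\setminus\Delta^n$ all iterates $f^k$ ($0\le k\le n$) are monotone and continuous, so the trace of $\mathcal C^n$ on such a component is controlled by only polynomially many endpoints. Since that lemma is already established, the only residual point requiring care is the reduction of the supremum in \eqref{COND.ENTROPY} to finite covers, which is justified exactly as the analogous reduction following Definition \ref{ENTROPY.DEF}, via part {\it (a)} of Lemma \ref{TOPOCOVERS}.
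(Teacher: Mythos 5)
Your proof follows the paper's argument: Lemma \ref{CEPSILON} for the lower bound, and \eqref{FRANCISCA} with Lemma \ref{TECH.C.E} to show $h_{top}(f\mid\mathcal D)=0$ for the upper bound. One caution, however: your opening remark that ``$\mathcal D$ is an admissible $X$-open cover, so $h_{top}(f)\ge h_{top}(f,\mathcal D)$ is free'' is not literally correct. The collection $\mathcal D=\{X_1,\ldots,X_N\}$ is an $X$-open cover \emph{of $\widetilde X$}, not of $X$ --- it misses $\Delta$ --- and hence $\mathcal D$ does not appear in the supremum over $X$-open covers (of $X$) that defines $h_{top}(f)$ in Definition \ref{ENTROPY.DEF}. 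The inequality $h_{top}(f)\ge h_{top}(f,\mathcal D)$ is therefore not immediate; it is precisely what Lemma \ref{CEPSILON} achieves by enlarging $\mathcal D$ to the genuine $X$-open covers $\mathcal C_\epsilon=\mathcal D\cup\mathcal D_\epsilon$, observing the monotonicity $\mathcal C_\xi\preccurlyeq\mathcal C_\eta$ for $\xi>\eta$, and passing to the limit $\epsilon\to 0$. Since you do ultimately cite Lemma \ref{CEPSILON} for the lower bound, your proof is sound as written, but the ``free'' framing would be a genuine error if it were the whole argument, so the reliance on Lemma \ref{CEPSILON} is essential, not optional.
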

        \begin{proof}
            From Lemma \ref{TECH.C.E} and the observation given right after \eqref{COND.ENTROPY}, it follows that $h_{top}(f|\mathcal D) = 0$. Furthermore, from Remark \ref{REMARK.TECH}, we deduce that
            \begin{equation}
                h_{top}(f) \leq h_{top}(f, \mathcal D) + h_{top}(f|\mathcal D) = h_{top}(f, \mathcal D).\label{INQ.FIN}
            \end{equation}
            Now, from \eqref{DANA} and Lemma \ref{CEPSILON}, we conclude that
            \[
               \lim_{n \to \infty} \frac{\log c_n}{n}\leq h_{top}(f) \leq \lim_{n \to \infty} \frac{\log \aleph_{\widetilde X}(\mathcal D^n)}{n} = \lim_{n \to \infty} \frac{\log c_n}{n},
            \]
            which concludes the proof.
        \end{proof}

        Furthermore, we can give a straightforward proof of Corollary \ref{GIORNO-GIOVANNA} by using Theorem \ref{HTOP.LEQ}. Recall that --in our context-- a pc-map is {\it injective} when it is injective over the union of its continuity pieces.
        \begin{corollary}\label{COSA}
            Let $f: X \to X$ be an injective pc-map. Then, $h_{top}(f) = 0$.
        \end{corollary}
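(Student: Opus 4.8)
\noindent\emph{Proof sketch.}
The plan is to invoke Theorem \ref{HTOP.LEQ}, which identifies $h_{top}(f)$ with $\lim_{n\to\infty}\frac{1}{n}\log c_n$, where $c_n=\#(\mathcal D^n)$ is the number of connected components of $X\setminus\Delta^n$. Since $\mathcal D^n$ is obtained by deleting the finite set $\Delta^n$ from the interval $X$, one has the elementary bound $c_n\le\#\Delta^n+1$ for every $n\ge 1$. Hence it suffices to show that $\#\Delta^n$ grows at most polynomially in $n$: this forces $\lim_{n\to\infty}\frac{1}{n}\log c_n=0$, and Theorem \ref{HTOP.LEQ} then yields $h_{top}(f)=0$. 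Note that the standing reductions of this section are in force for an injective pc-map, since injectivity on each continuity piece together with continuity there makes $f$ piecewise strictly monotonic, and the possible presence of removable discontinuities alters neither $c_n$ nor $h_{top}(f)$.

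To control $\#\Delta^n$, I would write $S_j:=f^{-j}(\Delta)$, so that $\Delta^n=\bigcup_{j=0}^{n-1}S_j$ and $S_{j+1}=f^{-1}(S_j)$, whence $f(S_{j+1})\subset S_j$. The key step is the recursive estimate $\#S_{j+1}\le\#S_j+\#\Delta$. To prove it, split $S_{j+1}$ as $(S_{j+1}\cap\Delta)\cup(S_{j+1}\setminus\Delta)$: the first piece has at most $\#\Delta$ points, while the second is contained in $X\setminus\Delta=\bigcup_{i=1}^N X_i$, on which $f$ is injective by hypothesis; since $f$ maps $S_{j+1}\setminus\Delta$ injectively into $S_j$, we get $\#(S_{j+1}\setminus\Delta)\le\#S_j$, and adding the two contributions gives the claim. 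This is the only place where the injectivity assumption is genuinely used, and it is the heart of the argument; the rest is bookkeeping.

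Starting from $\#S_0=\#\Delta=N-1$, the recursion gives $\#S_j\le(j+1)(N-1)$ for all $j\ge 0$, so that
\[
    \#\Delta^n\le\sum_{j=0}^{n-1}\#S_j\le(N-1)\sum_{j=0}^{n-1}(j+1)=(N-1)\,\frac{n(n+1)}{2}.
\]
Therefore $c_n\le(N-1)\frac{n(n+1)}{2}+1=O(n^2)$, hence $\frac{1}{n}\log c_n\to 0$ as $n\to\infty$, and Theorem \ref{HTOP.LEQ} gives $h_{top}(f)=h_{top}(f,\mathcal D)=\lim_{n\to\infty}\frac{1}{n}\log c_n=0$, as desired. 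The main (and essentially only) obstacle is the recursive bound on $\#S_{j+1}$, i.e.\ correctly isolating the bounded ``boundary contribution'' $S_{j+1}\cap\Delta$ from the injective part; once that is in place the polynomial growth of $c_n$ is immediate. $\square$
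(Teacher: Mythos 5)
Your proof is correct and follows essentially the same route as the paper: invoke the Misiurewicz--Szlenk formula (Theorem \ref{HTOP.LEQ}) and use injectivity of $f$ on $X\setminus\Delta$ to show the number of discontinuity points grows at most polynomially. The only difference is bookkeeping: you bound $\#f^{-j}(\Delta)$ recursively and sum, getting a quadratic bound on $\#\Delta^n$, while the paper runs the same injectivity argument directly on $c_n$ via $c_{k+1}\le c_k+(N-1)$ to get a linear bound; either is polynomial and hence suffices.
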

        \begin{proof}
            First, as $f$ is injective, then $f$ has no critical continuity points (see the beginning of Section \ref{sec7} to recall the context).
        
            Now we prove that, as $f$ is injective, then $c_n \leq N + (n - 1) (N - 1)$ for every $n \geq 1$. To do this, we proceed by induction on $n$. First, if $n = 1$, the result follows. Now, assume that for some $k \geq 1$, we have
            \begin{align*}
                c_k \leq N + (k - 1) (N - 1).
            \end{align*}
            This implies that $f^k$ has at most $N + (k - 1) (N - 1)$ continuity pieces. Now, note that the discontinuities of $f^{k + 1}$ comprise two kinds of points. On the one hand, the extreme points of continuity pieces of $f^k$ and, on the other hand, the values of $x$ such that $f^k(x) \in \Delta$. However, as $f$ is injective, then $f^k(X \setminus \Delta^k)$ can intersect $\Delta$ at most $N - 1$ times. Therefore,
            \begin{align*}
                c_{k + 1} \leq c_k + N - 1 \leq N + k (N - 1),
            \end{align*}
            which concludes the first part of the proof. Now, thanks to Theorem \ref{main2}, we have that
            \begin{align*}
                h_{top}(f) = \lim_{n \to \infty} \frac{1}{n} \, \log c_n \leq \lim_{n \to \infty} \frac{1}{n} \, \log (N + (n - 1) (N - 1)) = 0,
            \end{align*}
            which lets us conclude the result.
        \end{proof}

    \subsection{Entropy of pc-maps with surjective branches} \label{GATALINA}
        We conclude Section \ref{sec7} by proving the last main result of this article. Let us recall that, throughout this section, we have globally assumed that $f$ is strictly monotonic on each of its continuity pieces. For this part, we assume that ``$f^{- j}(\Delta) \cap \Delta = \emptyset$ for all $j \geq 1$''. Although this condition depends on how $f$ is defined over $\Delta$, it is only an auxiliary assumption used to accurately count the number of preimages of $\Delta$. Nevertheless, as the definition of the map over $\Delta$ does not affect its topological entropy, then this condition can be easily induced by redefining the map over said set (see Remark \ref{INDEP.HTOP}).


        \begin{lemma}\label{TECH}
            If $\overline{f(X_i)} = X$ for each $i\in\{1, \ldots, N\}$ and $f^{-j}(\Delta)\cap\Delta = \emptyset$ for every $j \geq 1$, then $\#(\Delta^n) = N^n - 1$ for every $n \geq 0$.
        \end{lemma}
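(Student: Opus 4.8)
The plan is to argue by induction on $n$, exploiting the elementary identity $\Delta^{n+1} = \Delta \cup f^{-1}(\Delta^n)$, which is immediate from $\Delta^m = \bigcup_{j=0}^{m-1} f^{-j}(\Delta)$. The base cases are trivial: $\Delta^0 = \emptyset$ gives $\#(\Delta^0) = 0 = N^0 - 1$, and $\Delta^1 = \Delta$ together with $N = \#\Delta + 1$ (item (b) of Remark \ref{CRITICAL}) gives $\#(\Delta^1) = N - 1 = N^1 - 1$. For the inductive step, assuming $\#(\Delta^n) = N^n - 1$, I would establish two claims: (i) the union $\Delta^{n+1} = \Delta \cup f^{-1}(\Delta^n)$ is disjoint; and (ii) every $c \in \Delta^n$ has exactly $N$ preimages under $f$, so that $\#\big(f^{-1}(\Delta^n)\big) = N \cdot \#(\Delta^n)$ since fibers over distinct points are disjoint. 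Granting (i) and (ii), one obtains $\#(\Delta^{n+1}) = (N-1) + N(N^n - 1) = N^{n+1} - 1$, which closes the induction.

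Claim (i) and a companion fact needed in (ii) both come directly from the standing assumption $f^{-j}(\Delta) \cap \Delta = \emptyset$ for all $j \geq 1$. Indeed, fix $c \in \Delta^n$, say $f^j(c) \in \Delta$ with $0 \leq j \leq n-1$, and suppose $x \in \Delta$ satisfies $f(x) = c$. Then $f^{j+1}(x) = f^j(c) \in \Delta$, so $x \in f^{-(j+1)}(\Delta) \cap \Delta$ with $j+1 \geq 1$, a contradiction. Hence $f^{-1}(c) \cap \Delta = \emptyset$ for every $c \in \Delta^n$, and taking the union over $c \in \Delta^n$ yields $f^{-1}(\Delta^n) \cap \Delta = \emptyset$, which is (i).

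For claim (ii) the crucial auxiliary fact is that $\Delta^n$ lies in the interior of $X$, and this is exactly where $\overline{f(X_i)} = X$ is used. Write $X = [a,b]$; then $X_1 = [a, d)$ is semi-open with closed end $a$, and $f|_{X_1}$ is continuous and strictly monotonic, so its image is an interval with $f(a)$ as its closed endpoint; since $\overline{f(X_1)} = [a,b]$, that endpoint must be $a$ or $b$, i.e. $f(a) \in \{a,b\}$, and symmetrically $f(b) \in \{a,b\}$. Thus the forward orbits of $a$ and $b$ remain in $\{a,b\}$, a set disjoint from $\Delta$ (the endpoints are not in $\Delta$), so no iterate of an endpoint lands in $\Delta$; equivalently $a, b \notin \Delta^n$. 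Now take $c \in \Delta^n$: by the previous paragraph $f^{-1}(c) = \bigcup_{i=1}^N \big(f^{-1}(c) \cap X_i\big)$, and because $c$ lies in the interior of $X$ while $\overline{f(X_i)} = X$ forces the interior of $X$ to be contained in the interval $f(X_i)$, and $f|_{X_i}$ is injective, each $f^{-1}(c) \cap X_i$ is a singleton. Since the $X_i$ are pairwise disjoint, $\#\big(f^{-1}(c)\big) = N$, giving (ii).

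The step I expect to require the most care is the endpoint bookkeeping inside (ii): without surjectivity of the branches, a point of $\Delta^n$ could coincide with an endpoint of $X$ and then have fewer than $N$ preimages (interior branches map onto the open interval $(a,b)$ only), which would break the count. The point of the argument above is that $\overline{f(X_i)} = X$ makes $\{a,b\}$ a forward-invariant set disjoint from $\Delta$, which is precisely what forbids this; everything else is the disjointness bookkeeping recorded above and a routine induction.
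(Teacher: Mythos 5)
Your proof is correct and runs parallel to the paper's, but with a different decomposition of $\Delta^{n+1}$. You peel off the $j=0$ term, writing $\Delta^{n+1} = \Delta \sqcup f^{-1}(\Delta^n)$, and apply the $N$-to-$1$ property of $f$ once per step, yielding the one-step recursion $\#(\Delta^{n+1}) = (N-1) + N\,\#(\Delta^n)$. The paper instead peels off the top term, writing $\Delta^{k+1} = \Delta^k \sqcup f^{-k}(\Delta)$, and then evaluates $\#(f^{-k}(\Delta)) = N\,\#\big(f^{-(k-1)}(\Delta)\big) = N^k(N-1)$ via a separate inner recursion justified by ``$f$ is surjective on each branch.'' Both decompositions reduce to the same core counting fact: every relevant point has exactly $N$ $f$-preimages, which requires the point to lie in the interior of $X$ (so that all $N$ branches hit it) and its preimage to avoid $\Delta$ (so that no candidate preimage is lost to $X\setminus\bigcup X_i$). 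The disjointness from $\Delta$ is the standing hypothesis in both proofs. Where you go further is the interiority: you show explicitly that $\overline{f(X_1)}=\overline{f(X_N)}=X$ together with the semi-open structure of $X_1,X_N$ forces $\{a,b\}$ to be forward-invariant and disjoint from $\Delta$, hence $a,b\notin\Delta^n$. The paper leaves this endpoint bookkeeping implicit in the phrase ``surjective on each branch,'' so on this subpoint your argument is more scrupulous than the source. There is no gap.
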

        \begin{proof} 
            We proceed by induction on $n$. Taking $n = 0$ we have that $\#(\Delta^0) = \#\emptyset= N^0 - 1 = 0$. Next, we assume that $\#(\Delta^k) = N^k - 1$ for some $k \geq 0$. Note that
            \[
                f^{-k}(\Delta) \cap \Delta^k = \bigcup_{j = 0}^{k - 1} f^{- j}(\Delta) \cap f^{- k}(\Delta) = \bigcup_{j = 0}^{k - 1} f^{- j}\big(\Delta \cap f^{- k + j}(\Delta)\big) = \emptyset,
            \]
            since, by hypothesis, the pre-images of $\Delta$ do not intersect $\Delta$. It follows that 
            \begin{equation}
                \#(\Delta^{k + 1}) = \#(f^{- k}(\Delta)) + \#(\Delta^k). \label{DELTAk}
            \end{equation}
            On the other hand, using the fact that $f$ is surjective on each branch, we deduce
            \begin{equation}
                \#(f^{- k}(\Delta)) = N \cdot \# f^{- (k - 1)}(\Delta) = N^k \cdot \#\Delta = N^k (N - 1). \label{PREIMAGEk}
            \end{equation}
            Finally, from the induction hypothesis and equalities \eqref{DELTAk} and \eqref{PREIMAGEk}, we conclude that
            \[
                \#(\Delta^{k + 1}) = N^k (N - 1) + N^k - 1 = N^{k + 1} - 1,
            \]
            which ends the proof.
        \end{proof}

        \begin{lemma}\label{LEQ.UNO}
            For every $n \geq 1$, $\big|\aleph_{\widetilde X}(\mathcal D^n) - \#(\Delta^n)\big| \leq 1$.
        \end{lemma}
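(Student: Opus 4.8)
The plan is to reduce this bound to an elementary count of the connected components of $X \setminus \Delta^n$. First I would use Lemma \ref{CARDI.N} to rewrite $\aleph_{\widetilde X}(\mathcal D^n) = \#(\mathcal D^n)$, so that it suffices to prove $\big|\#(\mathcal D^n) - \#(\Delta^n)\big| \leq 1$. Next I would recall --- as has essentially been established already through Remark \ref{DNXD} and Lemma \ref{PARTIAL.FND} --- that $\mathcal D^n$ is precisely the collection of connected components of $X \setminus \Delta^n$: indeed $\mathcal D^n$ is a finite family of pairwise disjoint intervals, no member of which meets $\Delta^n$ (since $\mathcal D^n = \mathcal D^n \setminus \Delta^n$), and $\partial(\mathcal D^n) = \Delta^n$; since $\widetilde X \subset \bigcup_{D \in \mathcal D^n} D$ and $\widetilde X$ is dense in $X$, the open set $\bigcup_{D \in \mathcal D^n} D$ is dense, which together with $\partial(\mathcal D^n) = \Delta^n$ forces $\bigcup_{D \in \mathcal D^n} D = X \setminus \Delta^n$. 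Disjointness and connectedness of the members of $\mathcal D^n$ then identify them with the connected components of $X \setminus \Delta^n$.

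With this identification the count is immediate. Writing $X = [a, b]$ and $\Delta^n = \{p_1 < \cdots < p_m\}$ with $m := \#(\Delta^n)$, deleting these $m$ points from $[a,b]$ yields $m + 1$ subintervals when none of the $p_i$ is an endpoint of $X$, and one component fewer for each of the (at most two) endpoints of $X$ that happens to lie in $\Delta^n$. Hence
\[
    \aleph_{\widetilde X}(\mathcal D^n) = \#(\mathcal D^n) = \#(\Delta^n) + 1 - \#\big(\Delta^n \cap \{a, b\}\big),
\]
and since $\#\big(\Delta^n \cap \{a, b\}\big) \in \{0, 1, 2\}$ we obtain $\big|\aleph_{\widetilde X}(\mathcal D^n) - \#(\Delta^n)\big| = \big|1 - \#(\Delta^n \cap \{a,b\})\big| \leq 1$, which is the claim. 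Note that for $n = 1$ the endpoints of $X$ are not in $\Delta = \Delta^1$, so the difference is exactly $1$, consistent with $N = \#\Delta + 1$; the correction term can become nonzero only when some preimage $f^{-j}(\Delta)$ with $j \geq 1$ captures an endpoint of $X$.

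I expect the only genuinely delicate point to be the identification $\bigcup_{D \in \mathcal D^n} D = X \setminus \Delta^n$, i.e. checking that $\mathcal D^n$ neither omits an entire component of $X \setminus \Delta^n$ (ruled out because the dense set $\widetilde X$ is covered by $\mathcal D^n$) nor contains an interval straddling a point of $\Delta^n$ (ruled out by the fact that no member of $\mathcal D^n$ meets $\Delta^n$, together with $\partial(\mathcal D^n) = \Delta^n$). Everything past that is the textbook fact that an interval minus a finite set of points has one more component than the number of deleted points, with a correction of at most one coming from the two ends of $X$.
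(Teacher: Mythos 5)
Your proof is correct and follows essentially the same route as the paper's: reduce to counting components of $X \setminus \Delta^n$ via Lemma \ref{CARDI.N}, Remark \ref{DNXD}, and Lemma \ref{PARTIAL.FND}, then split on whether the endpoints of $X$ lie in $\Delta^n$. You simply spell out the identification $\bigcup_{D \in \mathcal D^n} D = X \setminus \Delta^n$ and the resulting formula $\#(\mathcal D^n) = \#(\Delta^n) + 1 - \#(\Delta^n \cap \{a,b\})$ more explicitly than the paper, which merely lists the four cases.
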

        \begin{proof}
           Fix $n \geq 1$ and assume that $X = [a, b]$, where $a<b$. From Lemma \ref{PARTIAL.FND}, we have four cases. In one case, $a, b\notin \Delta^n$, which implies that $\#(\mathcal D^n) = \#(\Delta^n) + 1$, whilst the other cases arise from the possible alternatives to the former condition. In all of them, the inequality holds.
        \end{proof}

        Finally, we are in a position to provide a proof for the last main result of this article. For this, we reformulate Theorem \ref{main3} recalling that $N = c_1$ due to what we assumed at the beginning of this section.

        \begin{theorem}\label{FINAL.THEOREM}
            If $\overline{f(X_i)} = X$ for each $i \in \{1, \ldots, N\}$, then $h_{top}(f) = \log N$.
        \end{theorem}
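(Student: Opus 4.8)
The plan is to reduce the statement to the counting already carried out in Lemmas \ref{TECH} and \ref{LEQ.UNO}, combined with the Misiurewicz--Szlenk identity from Theorem \ref{HTOP.LEQ}. Recall that under the standing assumptions of this section $f$ is piecewise strictly monotonic with no removable discontinuities, so that $c_n = \#(\mathcal D^n) = \aleph_{\widetilde X}(\mathcal D^n)$ for every $n \geq 1$ (Lemma \ref{CARDI.N} together with the discussion preceding it) and $N = c_1$. Moreover, by the auxiliary reduction explained right before Lemma \ref{TECH} --- legitimate because $h_{top}(f)$ is insensitive to how $f$ is defined on $\Delta$ (Remark \ref{INDEP.HTOP}), and redefining $f$ on the finite set $\Delta$ changes neither the continuity pieces nor the values $\overline{f(X_i)}$ --- we may assume $f^{-j}(\Delta) \cap \Delta = \emptyset$ for all $j \geq 1$, which is exactly the hypothesis needed to invoke Lemma \ref{TECH}.

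First I would apply Lemma \ref{TECH}: using the surjectivity hypothesis $\overline{f(X_i)} = X$ for each $i$, it gives $\#(\Delta^n) = N^n - 1$ for every $n \geq 0$. Feeding this into Lemma \ref{LEQ.UNO} yields $\big|\aleph_{\widetilde X}(\mathcal D^n) - (N^n - 1)\big| \leq 1$, and hence, recalling also that $\aleph_{\widetilde X}(\mathcal D^n) \geq 1$ always,
\[
    \max\{1,\, N^n - 2\} \leq c_n = \aleph_{\widetilde X}(\mathcal D^n) \leq N^n \qquad \forall\, n \geq 1.
\]
The refinement $\max\{1, N^n - 2\}$ in the lower bound is there only to keep the subsequent logarithm well defined; it matters exclusively in the degenerate case $N = 1$ (where $\Delta = \emptyset$, $f$ is continuous, and $c_n \equiv 1$).

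Then I would invoke Theorem \ref{HTOP.LEQ}, which identifies $h_{top}(f) = \lim_{n \to \infty} \tfrac{1}{n} \log c_n$. Taking logarithms, dividing by $n$, and letting $n \to \infty$ in the displayed inequalities, a squeeze argument gives $h_{top}(f) = \log N$, since $\tfrac{1}{n}\log(N^n) = \log N$ and $\tfrac{1}{n}\log\max\{1, N^n - 2\} \to \log N$ as $n \to \infty$ for every integer $N \geq 1$. Because $N = c_1$, this is precisely the assertion of Theorem \ref{main3} as originally phrased.

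I do not anticipate any real obstacle: the arithmetic core of the argument is entirely inside Lemma \ref{TECH} (whose proof uses that every branch is onto, so each point of $\Delta$ acquires exactly $N$ preimages under each further iterate), and what remains is bookkeeping --- verifying that the auxiliary condition $f^{-j}(\Delta)\cap\Delta = \emptyset$ can be arranged without altering $h_{top}(f)$, and dealing with the edge case $N = 1$ so that the $\log$ of the lower bound is always defined.
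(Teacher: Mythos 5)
Your proposal is correct and follows essentially the same route as the paper's proof: reduce via Remark~\ref{INDEP.HTOP} to the case $f^{-j}(\Delta)\cap\Delta=\emptyset$, invoke Lemma~\ref{TECH} to get $\#(\Delta^n)=N^n-1$, feed that into Lemma~\ref{LEQ.UNO}, and squeeze using the Misiurewicz--Szlenk identity from Theorem~\ref{HTOP.LEQ}. Your explicit handling of the degenerate case $N=1$ (replacing the lower bound $N^n-2$ by $\max\{1,N^n-2\}$ so the logarithm stays defined) is a small but genuine tidying that the paper's displayed inequality skips over.
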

        
        \begin{proof}
            Without loss of generality, we assume that $f^{- j}(\Delta) \cap \Delta = \emptyset$ for all $j \geq 1$.
            From Theorem \ref{HTOP.LEQ}, and Lemmas \ref{TECH} and \ref{LEQ.UNO}, it follows that
            \[
                \lim_{n \to \infty} \frac{\log\big(N^n - 2\big)}{n} \leq h_{top}(f) = \lim_{n \to \infty} \frac{\log \aleph_{\widetilde X}(\mathcal D^n)}{n} \leq \lim_{n \to \infty} \frac{\log(N^n)}{n},
            \]
            which concludes the proof.
        \end{proof}

\section{Some examples and applications} \label{sec8}

    We conclude this article by showing some examples and applications of the theory we have presented.

    \begin{example} \label{MAGDA}
        Consider pc-maps with two continuity pieces, whose representations are defined by the graphs shown in Figure \ref{fig:JULIANA}. As can be observed, pc-maps with two strictly monotonic continuity pieces such as the (discontinuous) doubling map, tent map, and even one-dimensional Lorenz maps that are surjective on each branch have a topological entropy equal to $\log 2$ (see Theorem \ref{main3}). We highlight that the pieces of $X$ do not need to have the same length and the map does not need to be only expanding or contracting in those intervals. Moreover, these examples can be generalized to maps with any number of continuity pieces as long as their number of critical points is finite (see Example \ref{MOD.N}).
        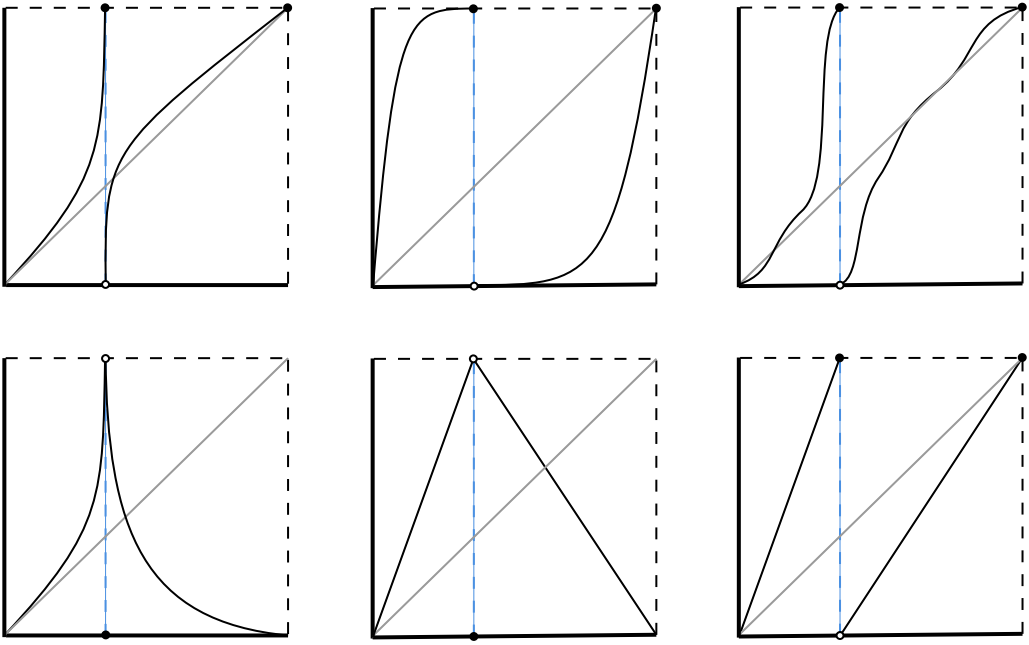
\begin{figure}
            \tikzset{every picture/.style={line width=0.75pt}} 

            \begin{tikzpicture}[x=0.75pt,y=0.75pt,yscale=-1,xscale=1,scale=.7]

                \draw [color={rgb, 255:red, 0; green, 0; blue, 0 }  ,draw opacity=1 ][fill={rgb, 255:red, 155; green, 155; blue, 155 }  ,fill opacity=1 ]   (362.48,303.06) -- (492.9,501.34) ;
                \draw [color={rgb, 255:red, 0; green, 0; blue, 0 }  ,draw opacity=1 ][fill={rgb, 255:red, 155; green, 155; blue, 155 }  ,fill opacity=1 ]   (623.54,302.75) -- (551.75,502.75) ;
                \draw [color={rgb, 255:red, 0; green, 0; blue, 0 }  ,draw opacity=1 ][fill={rgb, 255:red, 155; green, 155; blue, 155 }  ,fill opacity=1 ]   (753.65,302.54) -- (623.88,502.06) ;
                \draw    (623.95,249.49) .. controls (639.8,243.4) and (633.5,199.25) .. (651,173.25) .. controls (668.5,147.25) and (663,134.25) .. (693,110.5) .. controls (723,86.75) and (714,61) .. (753.59,50.28) ;
                \draw [color={rgb, 255:red, 0; green, 0; blue, 0 }  ,draw opacity=1 ][line width=1.5]    (290.69,251.47) -- (492.9,249.54) ;
                \draw [color={rgb, 255:red, 0; green, 0; blue, 0 }  ,draw opacity=1 ][line width=1.5]    (29.43,250.04) -- (230.4,250.04) ;
                \draw [color={rgb, 255:red, 0; green, 0; blue, 0 }  ,draw opacity=0.36 ][line width=0.75]  [dash pattern={on 4.5pt off 4.5pt}]  (230.4,249.04) -- (230.4,47.76) ;
                \draw [color={rgb, 255:red, 0; green, 0; blue, 0 }  ,draw opacity=0.36 ][line width=0.75]  [dash pattern={on 4.5pt off 4.5pt}]  (29.12,50.76) -- (230.09,50.76) ;
                \draw [color={rgb, 255:red, 74; green, 144; blue, 226 }  ,draw opacity=0.48 ][fill={rgb, 255:red, 74; green, 144; blue, 226 }  ,fill opacity=1 ][line width=0.75]  [dash pattern={on 4.5pt off 4.5pt}]  (100.32,250.28) -- (100.32,49.01) ;
                \draw    (28.5,249.66) .. controls (100,172.25) and (98.25,150) .. (99.98,50.76) ;
                \draw [color={rgb, 255:red, 155; green, 155; blue, 155 }  ,draw opacity=0.64 ][fill={rgb, 255:red, 155; green, 155; blue, 155 }  ,fill opacity=1 ]   (230.4,50.76) -- (28.5,249.66) ;
                \draw    (230.09,50.76) .. controls (100.26,152.66) and (99.48,152.66) .. (100.45,249.66) ;
                \draw  [fill={rgb, 255:red, 0; green, 0; blue, 0 }  ,fill opacity=1 ] (97.51,50.76) .. controls (97.51,49.39) and (98.61,48.28) .. (99.98,48.28) .. controls (101.35,48.28) and (102.46,49.39) .. (102.46,50.76) .. controls (102.46,52.13) and (101.35,53.24) .. (99.98,53.24) .. controls (98.61,53.24) and (97.51,52.13) .. (97.51,50.76) -- cycle ;
                \draw  [fill={rgb, 255:red, 255; green, 255; blue, 255 }  ,fill opacity=1 ] (98.77,249.66) .. controls (98.77,248.73) and (99.52,247.97) .. (100.45,247.97) .. controls (101.39,247.97) and (102.14,248.73) .. (102.14,249.66) .. controls (102.14,250.59) and (101.39,251.35) .. (100.45,251.35) .. controls (99.52,251.35) and (98.77,250.59) .. (98.77,249.66) -- cycle ;
                \draw [color={rgb, 255:red, 0; green, 0; blue, 0 }  ,draw opacity=0.36 ][line width=0.75]  [dash pattern={on 4.5pt off 4.5pt}]  (492.9,249.54) -- (492.9,48.26) ;
                \draw [color={rgb, 255:red, 0; green, 0; blue, 0 }  ,draw opacity=0.36 ][line width=0.75]  [dash pattern={on 4.5pt off 4.5pt}]  (291.62,51.26) -- (492.59,51.26) ;
                \draw [color={rgb, 255:red, 74; green, 144; blue, 226 }  ,draw opacity=0.48 ][fill={rgb, 255:red, 74; green, 144; blue, 226 }  ,fill opacity=1 ][line width=0.75]  [dash pattern={on 4.5pt off 4.5pt}]  (362.82,250.78) -- (362.82,49.51) ;
                \draw    (291,250.16) .. controls (307,52.5) and (316,51.5) .. (362.48,51.26) ;
                \draw [color={rgb, 255:red, 155; green, 155; blue, 155 }  ,draw opacity=0.64 ][fill={rgb, 255:red, 155; green, 155; blue, 155 }  ,fill opacity=1 ]   (492.9,51.26) -- (291,250.16) ;
                \draw  [fill={rgb, 255:red, 0; green, 0; blue, 0 }  ,fill opacity=1 ] (360.79,51.26) .. controls (360.79,50.33) and (361.55,49.57) .. (362.48,49.57) .. controls (363.42,49.57) and (364.17,50.33) .. (364.17,51.26) .. controls (364.17,52.19) and (363.42,52.95) .. (362.48,52.95) .. controls (361.55,52.95) and (360.79,52.19) .. (360.79,51.26) -- cycle ;
                \draw    (492.59,51.26) .. controls (463.69,248.92) and (447.44,249.92) .. (363.5,250.16) ;
                \draw  [fill={rgb, 255:red, 255; green, 255; blue, 255 }  ,fill opacity=1 ] (361.13,250.78) .. controls (361.13,249.85) and (361.89,249.09) .. (362.82,249.09) .. controls (363.75,249.09) and (364.51,249.85) .. (364.51,250.78) .. controls (364.51,251.71) and (363.75,252.47) .. (362.82,252.47) .. controls (361.89,252.47) and (361.13,251.71) .. (361.13,250.78) -- cycle ;
                \draw [color={rgb, 255:red, 0; green, 0; blue, 0 }  ,draw opacity=1 ][line width=1.5]    (28.19,251.25) -- (28.19,50.75) ;
                \draw [color={rgb, 255:red, 0; green, 0; blue, 0 }  ,draw opacity=1 ][line width=1.5]    (290.69,252.25) -- (290.69,50.97) ;
                \draw [color={rgb, 255:red, 0; green, 0; blue, 0 }  ,draw opacity=1 ][line width=1.5]    (551.69,250.8) -- (753.9,248.87) ;
                \draw [color={rgb, 255:red, 0; green, 0; blue, 0 }  ,draw opacity=0.36 ][line width=0.75]  [dash pattern={on 4.5pt off 4.5pt}]  (753.9,248.87) -- (753.9,47.6) ;
                \draw [color={rgb, 255:red, 0; green, 0; blue, 0 }  ,draw opacity=0.36 ][line width=0.75]  [dash pattern={on 4.5pt off 4.5pt}]  (552.62,50.59) -- (753.59,50.59) ;
                \draw [color={rgb, 255:red, 74; green, 144; blue, 226 }  ,draw opacity=0.48 ][fill={rgb, 255:red, 74; green, 144; blue, 226 }  ,fill opacity=1 ][line width=0.75]  [dash pattern={on 4.5pt off 4.5pt}]  (623.82,250.12) -- (623.82,48.84) ;
                \draw [color={rgb, 255:red, 155; green, 155; blue, 155 }  ,draw opacity=0.64 ][fill={rgb, 255:red, 155; green, 155; blue, 155 }  ,fill opacity=1 ]   (753.9,50.6) -- (552,249.49) ;
                \draw  [fill={rgb, 255:red, 0; green, 0; blue, 0 }  ,fill opacity=1 ] (621.79,50.59) .. controls (621.79,49.66) and (622.55,48.9) .. (623.48,48.9) .. controls (624.42,48.9) and (625.17,49.66) .. (625.17,50.59) .. controls (625.17,51.52) and (624.42,52.28) .. (623.48,52.28) .. controls (622.55,52.28) and (621.79,51.52) .. (621.79,50.59) -- cycle ;
                \draw  [fill={rgb, 255:red, 255; green, 255; blue, 255 }  ,fill opacity=1 ] (622.27,249.49) .. controls (622.27,248.56) and (623.02,247.8) .. (623.95,247.8) .. controls (624.89,247.8) and (625.64,248.56) .. (625.64,249.49) .. controls (625.64,250.42) and (624.89,251.18) .. (623.95,251.18) .. controls (623.02,251.18) and (622.27,250.42) .. (622.27,249.49) -- cycle ;
                \draw [color={rgb, 255:red, 0; green, 0; blue, 0 }  ,draw opacity=1 ][line width=1.5]    (551.69,251.58) -- (551.69,50.3) ;
                \draw    (552,249.49) .. controls (578.5,240.24) and (572.42,219.44) .. (596.71,196.72) .. controls (621,174) and (603.25,72.99) .. (623.48,50.59) ;
                \draw  [fill={rgb, 255:red, 255; green, 255; blue, 255 }  ,fill opacity=1 ] (98.63,249.59) .. controls (98.63,248.66) and (99.39,247.91) .. (100.32,247.91) .. controls (101.25,247.91) and (102.01,248.66) .. (102.01,249.59) .. controls (102.01,250.53) and (101.25,251.28) .. (100.32,251.28) .. controls (99.39,251.28) and (98.63,250.53) .. (98.63,249.59) -- cycle ;
                \draw  [fill={rgb, 255:red, 0; green, 0; blue, 0 }  ,fill opacity=1 ] (228.71,50.76) .. controls (228.71,49.83) and (229.47,49.08) .. (230.4,49.08) .. controls (231.33,49.08) and (232.09,49.83) .. (232.09,50.76) .. controls (232.09,51.7) and (231.33,52.45) .. (230.4,52.45) .. controls (229.47,52.45) and (228.71,51.7) .. (228.71,50.76) -- cycle ;
                \draw  [fill={rgb, 255:red, 0; green, 0; blue, 0 }  ,fill opacity=1 ] (491.21,51.26) .. controls (491.21,50.33) and (491.97,49.58) .. (492.9,49.58) .. controls (493.83,49.58) and (494.59,50.33) .. (494.59,51.26) .. controls (494.59,52.2) and (493.83,52.95) .. (492.9,52.95) .. controls (491.97,52.95) and (491.21,52.2) .. (491.21,51.26) -- cycle ;
                \draw  [fill={rgb, 255:red, 0; green, 0; blue, 0 }  ,fill opacity=1 ] (752.21,50.29) .. controls (752.21,49.35) and (752.97,48.6) .. (753.9,48.6) .. controls (754.83,48.6) and (755.59,49.35) .. (755.59,50.29) .. controls (755.59,51.22) and (754.83,51.97) .. (753.9,51.97) .. controls (752.97,51.97) and (752.21,51.22) .. (752.21,50.29) -- cycle ;
                \draw  [fill={rgb, 255:red, 255; green, 255; blue, 255 }  ,fill opacity=1 ] (97.84,249.59) .. controls (97.84,248.23) and (98.95,247.12) .. (100.32,247.12) .. controls (101.69,247.12) and (102.8,248.23) .. (102.8,249.59) .. controls (102.8,250.96) and (101.69,252.07) .. (100.32,252.07) .. controls (98.95,252.07) and (97.84,250.96) .. (97.84,249.59) -- cycle ;
                \draw  [fill={rgb, 255:red, 255; green, 255; blue, 255 }  ,fill opacity=1 ] (360.55,250.78) .. controls (360.55,249.41) and (361.66,248.3) .. (363.03,248.3) .. controls (364.4,248.3) and (365.51,249.41) .. (365.51,250.78) .. controls (365.51,252.15) and (364.4,253.26) .. (363.03,253.26) .. controls (361.66,253.26) and (360.55,252.15) .. (360.55,250.78) -- cycle ;
                \draw  [fill={rgb, 255:red, 0; green, 0; blue, 0 }  ,fill opacity=1 ] (227.61,50.76) .. controls (227.61,49.39) and (228.72,48.28) .. (230.09,48.28) .. controls (231.46,48.28) and (232.57,49.39) .. (232.57,50.76) .. controls (232.57,52.13) and (231.46,53.24) .. (230.09,53.24) .. controls (228.72,53.24) and (227.61,52.13) .. (227.61,50.76) -- cycle ;
                \draw  [fill={rgb, 255:red, 0; green, 0; blue, 0 }  ,fill opacity=1 ] (360.01,51.47) .. controls (360.01,50.1) and (361.11,48.99) .. (362.48,48.99) .. controls (363.85,48.99) and (364.96,50.1) .. (364.96,51.47) .. controls (364.96,52.84) and (363.85,53.95) .. (362.48,53.95) .. controls (361.11,53.95) and (360.01,52.84) .. (360.01,51.47) -- cycle ;
                \draw  [fill={rgb, 255:red, 0; green, 0; blue, 0 }  ,fill opacity=1 ] (490.42,51.05) .. controls (490.42,49.68) and (491.53,48.58) .. (492.9,48.58) .. controls (494.27,48.58) and (495.38,49.68) .. (495.38,51.05) .. controls (495.38,52.42) and (494.27,53.53) .. (492.9,53.53) .. controls (491.53,53.53) and (490.42,52.42) .. (490.42,51.05) -- cycle ;
                \draw  [fill={rgb, 255:red, 0; green, 0; blue, 0 }  ,fill opacity=1 ] (621.01,50.59) .. controls (621.01,49.22) and (622.11,48.11) .. (623.48,48.11) .. controls (624.85,48.11) and (625.96,49.22) .. (625.96,50.59) .. controls (625.96,51.96) and (624.85,53.07) .. (623.48,53.07) .. controls (622.11,53.07) and (621.01,51.96) .. (621.01,50.59) -- cycle ;
                \draw  [fill={rgb, 255:red, 0; green, 0; blue, 0 }  ,fill opacity=1 ] (751.21,50.29) .. controls (751.21,48.92) and (752.32,47.81) .. (753.69,47.81) .. controls (755.06,47.81) and (756.17,48.92) .. (756.17,50.29) .. controls (756.17,51.65) and (755.06,52.76) .. (753.69,52.76) .. controls (752.32,52.76) and (751.21,51.65) .. (751.21,50.29) -- cycle ;
                \draw  [fill={rgb, 255:red, 255; green, 255; blue, 255 }  ,fill opacity=1 ] (621.34,250.12) .. controls (621.34,248.75) and (622.45,247.64) .. (623.82,247.64) .. controls (625.19,247.64) and (626.3,248.75) .. (626.3,250.12) .. controls (626.3,251.48) and (625.19,252.59) .. (623.82,252.59) .. controls (622.45,252.59) and (621.34,251.48) .. (621.34,250.12) -- cycle ;
                \draw [color={rgb, 255:red, 0; green, 0; blue, 0 }  ,draw opacity=1 ][line width=1.5]    (290.69,503.27) -- (492.9,501.34) ;
                \draw [color={rgb, 255:red, 0; green, 0; blue, 0 }  ,draw opacity=1 ][line width=1.5]    (29.43,501.84) -- (230.4,501.84) ;
                \draw [color={rgb, 255:red, 0; green, 0; blue, 0 }  ,draw opacity=0.36 ][line width=0.75]  [dash pattern={on 4.5pt off 4.5pt}]  (230.4,500.84) -- (230.4,299.56) ;
                \draw [color={rgb, 255:red, 0; green, 0; blue, 0 }  ,draw opacity=0.36 ][line width=0.75]  [dash pattern={on 4.5pt off 4.5pt}]  (29.12,302.56) -- (230.09,302.56) ;
                \draw [color={rgb, 255:red, 74; green, 144; blue, 226 }  ,draw opacity=0.48 ][fill={rgb, 255:red, 74; green, 144; blue, 226 }  ,fill opacity=1 ][line width=0.75]  [dash pattern={on 4.5pt off 4.5pt}]  (100.32,502.08) -- (100.32,300.81) ;
                \draw    (28.5,501.46) .. controls (100,424.05) and (98.25,401.8) .. (99.98,302.56) ;
                \draw [color={rgb, 255:red, 155; green, 155; blue, 155 }  ,draw opacity=0.64 ][fill={rgb, 255:red, 155; green, 155; blue, 155 }  ,fill opacity=1 ]   (230.4,302.56) -- (28.5,501.46) ;
                \draw  [fill={rgb, 255:red, 255; green, 255; blue, 255 }  ,fill opacity=1 ] (98.77,501.46) .. controls (98.77,500.53) and (99.52,499.77) .. (100.45,499.77) .. controls (101.39,499.77) and (102.14,500.53) .. (102.14,501.46) .. controls (102.14,502.39) and (101.39,503.15) .. (100.45,503.15) .. controls (99.52,503.15) and (98.77,502.39) .. (98.77,501.46) -- cycle ;
                \draw [color={rgb, 255:red, 0; green, 0; blue, 0 }  ,draw opacity=0.36 ][line width=0.75]  [dash pattern={on 4.5pt off 4.5pt}]  (492.9,501.34) -- (492.9,300.06) ;
                \draw [color={rgb, 255:red, 0; green, 0; blue, 0 }  ,draw opacity=0.36 ][line width=0.75]  [dash pattern={on 4.5pt off 4.5pt}]  (291.62,303.06) -- (492.59,303.06) ;
                \draw [color={rgb, 255:red, 74; green, 144; blue, 226 }  ,draw opacity=0.48 ][fill={rgb, 255:red, 74; green, 144; blue, 226 }  ,fill opacity=1 ][line width=0.75]  [dash pattern={on 4.5pt off 4.5pt}]  (362.82,502.58) -- (362.82,301.31) ;
                \draw [color={rgb, 255:red, 155; green, 155; blue, 155 }  ,draw opacity=0.64 ][fill={rgb, 255:red, 155; green, 155; blue, 155 }  ,fill opacity=1 ]   (492.9,303.06) -- (291,501.96) ;
                \draw  [fill={rgb, 255:red, 0; green, 0; blue, 0 }  ,fill opacity=1 ] (360.79,303.06) .. controls (360.79,302.13) and (361.55,301.37) .. (362.48,301.37) .. controls (363.42,301.37) and (364.17,302.13) .. (364.17,303.06) .. controls (364.17,303.99) and (363.42,304.75) .. (362.48,304.75) .. controls (361.55,304.75) and (360.79,303.99) .. (360.79,303.06) -- cycle ;
                \draw  [fill={rgb, 255:red, 255; green, 255; blue, 255 }  ,fill opacity=1 ] (361.13,502.58) .. controls (361.13,501.65) and (361.89,500.89) .. (362.82,500.89) .. controls (363.75,500.89) and (364.51,501.65) .. (364.51,502.58) .. controls (364.51,503.51) and (363.75,504.27) .. (362.82,504.27) .. controls (361.89,504.27) and (361.13,503.51) .. (361.13,502.58) -- cycle ;
                \draw [color={rgb, 255:red, 0; green, 0; blue, 0 }  ,draw opacity=1 ][line width=1.5]    (28.19,503.05) -- (28.19,302.55) ;
                \draw [color={rgb, 255:red, 0; green, 0; blue, 0 }  ,draw opacity=1 ][line width=1.5]    (290.69,504.05) -- (290.69,302.77) ;
                \draw [color={rgb, 255:red, 0; green, 0; blue, 0 }  ,draw opacity=1 ][line width=1.5]    (551.69,502.6) -- (753.9,500.67) ;
                \draw [color={rgb, 255:red, 0; green, 0; blue, 0 }  ,draw opacity=0.36 ][line width=0.75]  [dash pattern={on 4.5pt off 4.5pt}]  (753.9,500.67) -- (753.9,299.4) ;
                \draw [color={rgb, 255:red, 0; green, 0; blue, 0 }  ,draw opacity=0.36 ][line width=0.75]  [dash pattern={on 4.5pt off 4.5pt}]  (552.62,302.39) -- (753.59,302.39) ;
                \draw [color={rgb, 255:red, 74; green, 144; blue, 226 }  ,draw opacity=0.48 ][fill={rgb, 255:red, 74; green, 144; blue, 226 }  ,fill opacity=1 ][line width=0.75]  [dash pattern={on 4.5pt off 4.5pt}]  (623.82,501.92) -- (623.82,300.64) ;
                \draw [color={rgb, 255:red, 155; green, 155; blue, 155 }  ,draw opacity=0.64 ][fill={rgb, 255:red, 155; green, 155; blue, 155 }  ,fill opacity=1 ]   (753.9,302.4) -- (552,501.29) ;
                \draw  [fill={rgb, 255:red, 0; green, 0; blue, 0 }  ,fill opacity=1 ] (621.79,302.39) .. controls (621.79,301.46) and (622.55,300.7) .. (623.48,300.7) .. controls (624.42,300.7) and (625.17,301.46) .. (625.17,302.39) .. controls (625.17,303.32) and (624.42,304.08) .. (623.48,304.08) .. controls (622.55,304.08) and (621.79,303.32) .. (621.79,302.39) -- cycle ;
                \draw  [fill={rgb, 255:red, 255; green, 255; blue, 255 }  ,fill opacity=1 ] (622.27,501.29) .. controls (622.27,500.36) and (623.02,499.6) .. (623.95,499.6) .. controls (624.89,499.6) and (625.64,500.36) .. (625.64,501.29) .. controls (625.64,502.22) and (624.89,502.98) .. (623.95,502.98) .. controls (623.02,502.98) and (622.27,502.22) .. (622.27,501.29) -- cycle ;
                \draw [color={rgb, 255:red, 0; green, 0; blue, 0 }  ,draw opacity=1 ][line width=1.5]    (551.69,503.38) -- (551.69,302.1) ;
                \draw  [fill={rgb, 255:red, 0; green, 0; blue, 0 }  ,fill opacity=1 ] (752.21,302.09) .. controls (752.21,301.15) and (752.97,300.4) .. (753.9,300.4) .. controls (754.83,300.4) and (755.59,301.15) .. (755.59,302.09) .. controls (755.59,303.02) and (754.83,303.77) .. (753.9,303.77) .. controls (752.97,303.77) and (752.21,303.02) .. (752.21,302.09) -- cycle ;
                \draw  [fill={rgb, 255:red, 0; green, 0; blue, 0 }  ,fill opacity=1 ] (360.34,502.58) .. controls (360.34,501.21) and (361.45,500.1) .. (362.82,500.1) .. controls (364.19,500.1) and (365.3,501.21) .. (365.3,502.58) .. controls (365.3,503.95) and (364.19,505.06) .. (362.82,505.06) .. controls (361.45,505.06) and (360.34,503.95) .. (360.34,502.58) -- cycle ;
                \draw  [fill={rgb, 255:red, 0; green, 0; blue, 0 }  ,fill opacity=1 ] (621.01,302.39) .. controls (621.01,301.02) and (622.11,299.91) .. (623.48,299.91) .. controls (624.85,299.91) and (625.96,301.02) .. (625.96,302.39) .. controls (625.96,303.76) and (624.85,304.87) .. (623.48,304.87) .. controls (622.11,304.87) and (621.01,303.76) .. (621.01,302.39) -- cycle ;
                \draw  [fill={rgb, 255:red, 0; green, 0; blue, 0 }  ,fill opacity=1 ] (751.21,302.09) .. controls (751.21,300.72) and (752.32,299.61) .. (753.69,299.61) .. controls (755.06,299.61) and (756.17,300.72) .. (756.17,302.09) .. controls (756.17,303.45) and (755.06,304.56) .. (753.69,304.56) .. controls (752.32,304.56) and (751.21,303.45) .. (751.21,302.09) -- cycle ;
                \draw  [fill={rgb, 255:red, 255; green, 255; blue, 255 }  ,fill opacity=1 ] (621.34,501.92) .. controls (621.34,500.55) and (622.45,499.44) .. (623.82,499.44) .. controls (625.19,499.44) and (626.3,500.55) .. (626.3,501.92) .. controls (626.3,503.28) and (625.19,504.39) .. (623.82,504.39) .. controls (622.45,504.39) and (621.34,503.28) .. (621.34,501.92) -- cycle ;
                \draw    (100.32,300.81) .. controls (101.4,427.4) and (127.4,491.8) .. (230.4,501.84) ;
                \draw  [fill={rgb, 255:red, 255; green, 255; blue, 255 }  ,fill opacity=1 ] (97.84,302.81) .. controls (97.84,301.44) and (98.95,300.33) .. (100.32,300.33) .. controls (101.69,300.33) and (102.8,301.44) .. (102.8,302.81) .. controls (102.8,304.18) and (101.69,305.29) .. (100.32,305.29) .. controls (98.95,305.29) and (97.84,304.18) .. (97.84,302.81) -- cycle ;
                \draw  [fill={rgb, 255:red, 0; green, 0; blue, 0 }  ,fill opacity=1 ] (97.98,501.46) .. controls (97.98,500.09) and (99.09,498.98) .. (100.45,498.98) .. controls (101.82,498.98) and (102.93,500.09) .. (102.93,501.46) .. controls (102.93,502.83) and (101.82,503.94) .. (100.45,503.94) .. controls (99.09,503.94) and (97.98,502.83) .. (97.98,501.46) -- cycle ;
                \draw [color={rgb, 255:red, 0; green, 0; blue, 0 }  ,draw opacity=1 ][fill={rgb, 255:red, 155; green, 155; blue, 155 }  ,fill opacity=1 ]   (362.48,303.27) -- (290.69,503.27) ;
                \draw  [fill={rgb, 255:red, 255; green, 255; blue, 255 }  ,fill opacity=1 ] (360.01,303.06) .. controls (360.01,301.69) and (361.11,300.58) .. (362.48,300.58) .. controls (363.85,300.58) and (364.96,301.69) .. (364.96,303.06) .. controls (364.96,304.43) and (363.85,305.54) .. (362.48,305.54) .. controls (361.11,305.54) and (360.01,304.43) .. (360.01,303.06) -- cycle ;
            \end{tikzpicture}
            \centering
            \caption{Examples of pc-maps with two strictly monotonic continuity pieces with a topological entropy of $\log 2$.}
            \label{fig:JULIANA}
        \end{figure}
    \end{example}
    
    \begin{example}\label{MOD.N}
        Let $N \geq 2$. Using Theorem \ref{main3}, it can be shown that the pc-map $T_N: [0, 1] \to [0, 1]$ given by
        \[
            T_N(x) = \left\{\!\!\begin{array}{ll}
                N x, & \text{ if $x \in [0, 1/N)$,}
                \\[.5ex]
                N x - j, & \text{ if $x \in \big[j/N, (j + 1)/N\big)$ and $1 \leq j \leq N - 2$,}
                \\[.5ex]
                N x - N + 1, & \text{ if $x \in \big[(N - 1)/N, 1\big]$,}
            \end{array}\right.
        \]
        has a topological entropy equal to $\log N$. In particular, $T_2$ corresponds to the discontinuous doubling map defined on $[0, 1]$.
    \end{example}

    \begin{example}\label{ANZIE}
        In Example \ref{MAGDA}, we highlighted the class of maps with more than one continuity piece that are surjective in each of them. Due to Lemma \ref{QUEQUE}, we have that some maps that are not surjective in all of their continuity pieces can also have positive entropy. In Figure \ref{fig:GABRIELA}, we show a map with four strictly monotonic continuity pieces with two pseudo-invariant sets: $X$, and the set comprising the last two continuity pieces. Furthermore, as the tent map in the last two branches is surjective in each of them, we can ensure that the topological entropy of this map will be greater than or equal to $\log 2$.
        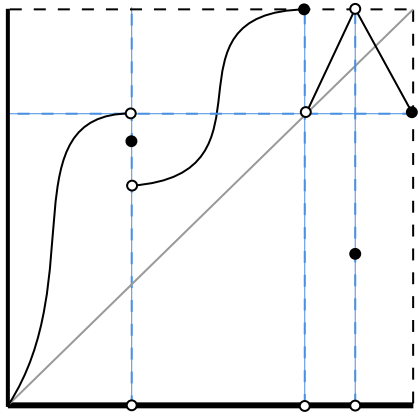
\begin{figure}
            \tikzset{every picture/.style={line width=0.75pt}} 

            \begin{tikzpicture}[x=0.75pt,y=0.75pt,yscale=-1,xscale=1,scale=1]

                \draw    (201.48,58) .. controls (124.4,59.33) and (192,141) .. (114.67,146.67) ;
                \draw [color={rgb, 255:red, 74; green, 144; blue, 226 }  ,draw opacity=0.48 ][fill={rgb, 255:red, 74; green, 144; blue, 226 }  ,fill opacity=1 ][line width=0.75]  [dash pattern={on 4.5pt off 4.5pt}]  (114.48,258.19) -- (114.48,56.92) ;
                \draw [color={rgb, 255:red, 0; green, 0; blue, 0 }  ,draw opacity=1 ][line width=2.25]    (52.69,257.21) -- (255,257.21) ;
                \draw [color={rgb, 255:red, 74; green, 144; blue, 226 }  ,draw opacity=0.48 ][fill={rgb, 255:red, 74; green, 144; blue, 226 }  ,fill opacity=1 ][line width=0.75]  [dash pattern={on 4.5pt off 4.5pt}]  (255.5,110.5) -- (52.5,110.5) ;
                \draw [color={rgb, 255:red, 0; green, 0; blue, 0 }  ,draw opacity=1 ][fill={rgb, 255:red, 155; green, 155; blue, 155 }  ,fill opacity=1 ]   (226,57.75) -- (201.33,110.67) ;
                \draw [color={rgb, 255:red, 0; green, 0; blue, 0 }  ,draw opacity=0.36 ][line width=0.75]  [dash pattern={on 4.5pt off 4.5pt}]  (254.9,256.28) -- (254.9,55) ;
                \draw [color={rgb, 255:red, 0; green, 0; blue, 0 }  ,draw opacity=0.36 ][line width=0.75]  [dash pattern={on 4.5pt off 4.5pt}]  (53.62,58) -- (254.59,58) ;
                \draw [color={rgb, 255:red, 74; green, 144; blue, 226 }  ,draw opacity=0.48 ][fill={rgb, 255:red, 74; green, 144; blue, 226 }  ,fill opacity=1 ][line width=0.75]  [dash pattern={on 4.5pt off 4.5pt}]  (200.82,257.52) -- (200.82,56.25) ;
                \draw [color={rgb, 255:red, 155; green, 155; blue, 155 }  ,draw opacity=0.64 ][fill={rgb, 255:red, 155; green, 155; blue, 155 }  ,fill opacity=1 ]   (254.9,58) -- (53,256.9) ;
                \draw [color={rgb, 255:red, 0; green, 0; blue, 0 }  ,draw opacity=1 ][line width=1.5]    (52.69,257.99) -- (52.69,57.71) ;
                \draw  [fill={rgb, 255:red, 0; green, 0; blue, 0 }  ,fill opacity=1 ] (198.01,58) .. controls (198.01,56.63) and (199.11,55.52) .. (200.48,55.52) .. controls (201.85,55.52) and (202.96,56.63) .. (202.96,58) .. controls (202.96,59.37) and (201.85,60.48) .. (200.48,60.48) .. controls (199.11,60.48) and (198.01,59.37) .. (198.01,58) -- cycle ;
                \draw  [fill={rgb, 255:red, 0; green, 0; blue, 0 }  ,fill opacity=1 ] (251.77,109.75) .. controls (251.77,108.38) and (252.88,107.27) .. (254.25,107.27) .. controls (255.62,107.27) and (256.73,108.38) .. (256.73,109.75) .. controls (256.73,111.12) and (255.62,112.23) .. (254.25,112.23) .. controls (252.88,112.23) and (251.77,111.12) .. (251.77,109.75) -- cycle ;
                \draw  [fill={rgb, 255:red, 255; green, 255; blue, 255 }  ,fill opacity=1 ] (198.34,257.52) .. controls (198.34,256.15) and (199.45,255.04) .. (200.82,255.04) .. controls (202.19,255.04) and (203.3,256.15) .. (203.3,257.52) .. controls (203.3,258.89) and (202.19,260) .. (200.82,260) .. controls (199.45,260) and (198.34,258.89) .. (198.34,257.52) -- cycle ;
                \draw [color={rgb, 255:red, 0; green, 0; blue, 0 }  ,draw opacity=1 ][fill={rgb, 255:red, 155; green, 155; blue, 155 }  ,fill opacity=1 ]   (254.25,109.75) -- (226,57.75) ;
                \draw [color={rgb, 255:red, 74; green, 144; blue, 226 }  ,draw opacity=0.48 ][fill={rgb, 255:red, 74; green, 144; blue, 226 }  ,fill opacity=1 ][line width=0.75]  [dash pattern={on 4.5pt off 4.5pt}]  (226,257.33) -- (226,57.75) ;
                \draw  [fill={rgb, 255:red, 255; green, 255; blue, 255 }  ,fill opacity=1 ] (223.52,257.33) .. controls (223.52,255.96) and (224.63,254.86) .. (226,254.86) .. controls (227.37,254.86) and (228.48,255.96) .. (228.48,257.33) .. controls (228.48,258.7) and (227.37,259.81) .. (226,259.81) .. controls (224.63,259.81) and (223.52,258.7) .. (223.52,257.33) -- cycle ;
                \draw    (53,256.9) .. controls (93.67,193.67) and (53.33,110.67) .. (114,110.33) ;
                \draw  [fill={rgb, 255:red, 0; green, 0; blue, 0 }  ,fill opacity=1 ] (111.86,124.33) .. controls (111.86,122.96) and (112.96,121.86) .. (114.33,121.86) .. controls (115.7,121.86) and (116.81,122.96) .. (116.81,124.33) .. controls (116.81,125.7) and (115.7,126.81) .. (114.33,126.81) .. controls (112.96,126.81) and (111.86,125.7) .. (111.86,124.33) -- cycle ;
                \draw  [fill={rgb, 255:red, 255; green, 255; blue, 255 }  ,fill opacity=1 ] (198.86,109.67) .. controls (198.86,108.3) and (199.96,107.19) .. (201.33,107.19) .. controls (202.7,107.19) and (203.81,108.3) .. (203.81,109.67) .. controls (203.81,111.04) and (202.7,112.14) .. (201.33,112.14) .. controls (199.96,112.14) and (198.86,111.04) .. (198.86,109.67) -- cycle ;
                \draw  [fill={rgb, 255:red, 255; green, 255; blue, 255 }  ,fill opacity=1 ] (111.52,110.33) .. controls (111.52,108.96) and (112.63,107.86) .. (114,107.86) .. controls (115.37,107.86) and (116.48,108.96) .. (116.48,110.33) .. controls (116.48,111.7) and (115.37,112.81) .. (114,112.81) .. controls (112.63,112.81) and (111.52,111.7) .. (111.52,110.33) -- cycle ;
                \draw  [fill={rgb, 255:red, 255; green, 255; blue, 255 }  ,fill opacity=1 ] (112.19,146.67) .. controls (112.19,145.3) and (113.3,144.19) .. (114.67,144.19) .. controls (116.04,144.19) and (117.14,145.3) .. (117.14,146.67) .. controls (117.14,148.04) and (116.04,149.14) .. (114.67,149.14) .. controls (113.3,149.14) and (112.19,148.04) .. (112.19,146.67) -- cycle ;
                \draw  [fill={rgb, 255:red, 255; green, 255; blue, 255 }  ,fill opacity=1 ] (112.01,257.19) .. controls (112.01,255.82) and (113.12,254.71) .. (114.48,254.71) .. controls (115.85,254.71) and (116.96,255.82) .. (116.96,257.19) .. controls (116.96,258.56) and (115.85,259.67) .. (114.48,259.67) .. controls (113.12,259.67) and (112.01,258.56) .. (112.01,257.19) -- cycle ;
                \draw  [fill={rgb, 255:red, 0; green, 0; blue, 0 }  ,fill opacity=1 ] (223.52,181) .. controls (223.52,179.63) and (224.63,178.52) .. (226,178.52) .. controls (227.37,178.52) and (228.48,179.63) .. (228.48,181) .. controls (228.48,182.37) and (227.37,183.48) .. (226,183.48) .. controls (224.63,183.48) and (223.52,182.37) .. (223.52,181) -- cycle ;
                \draw  [fill={rgb, 255:red, 255; green, 255; blue, 255 }  ,fill opacity=1 ] (223.52,57.75) .. controls (223.52,56.38) and (224.63,55.27) .. (226,55.27) .. controls (227.37,55.27) and (228.48,56.38) .. (228.48,57.75) .. controls (228.48,59.12) and (227.37,60.23) .. (226,60.23) .. controls (224.63,60.23) and (223.52,59.12) .. (223.52,57.75) -- cycle ;
            \end{tikzpicture}
            \centering
            \caption{Graph of a pc-map with four continuity pieces with a positive entropy greater than or equal to $\log 2$.}
            \label{fig:GABRIELA}
        \end{figure}
    \end{example}

    \begin{example}[{[Entropy of interval exchange transformations]}] \label{IET}
        The {\it generalized interval exchange transformations} (GIET) defined in \cite{NN} are a type of bijective pc-map with zero topological entropy (see Corollary \ref{GIORNO-GIOVANNA}). From this fact, together with Lemma \ref{CONJ}, we deduce that any GIET cannot be topologically conjugate to a map like those shown in Examples \ref{MAGDA} -- \ref{ANZIE}.
    \end{example}

    \begin{example}{{[Entropy of injective piecewise contracting interval maps]}}
        In \cite{GN22}, the authors show that an injective piecewise contracting interval map, which is increasing on each of its continuity pieces, has \underline{singular entropy} equal to zero; that is, 
        \[
            \lim_{n \to \infty} \frac{1}{n} \log c_n = 0.
        \]
        Therefore, from Corollary \ref{GIORNO-GIOVANNA}, we infer that the topological entropy of these maps equals zero even when not all their branches are increasing, as long as these maps are injective.
        \\[1ex]
        Independently, the authors in \cite{CV23} prove --using arguments related to atoms and symbolic complexity-- that injective piecewise contracting interval maps restricted to $\widetilde X$ have topological entropy equal to zero when computed via Bowen's formula.  Moreover, from Theorem \ref{BOWEN.MAIN} and Corollary \ref{GIORNO-GIOVANNA}, we infer that their topological entropy equals zero and it is independent of the metric used to compute it.
    \end{example}

    \noindent{\bf Closing remark:} Naturally, the next step in this study is the \textit{variational principle}, which is out of the scope of this article. B. Pires addressed a delicate point along this line in \cite{P16}, where he proves the existence of regular Borel probability measures that are invariant for pc-maps {\it without connections}, which are the maps $f$ such that the one-sided limits of $f(x)$ as $x$ approaches a point in $\Delta$ belong to $\widetilde X$ (see rigorous definition in \cite{P16}). Thus, the challenge of establishing a variational principle in this context will primarily rely on adapting the classical proof using our notion of topological entropy for pc-maps, assuming the ``no connections'' hypothesis.\\[-1ex]

\noindent{\bf Acknowledgments:} A.E.C. was supported by ANID Fondecyt Research Initiation N$^{\circ}$11230064, ANID Fondecyt Regular N$^{\circ}$1230569 and MathAmsud 22-MATH-10. E.V-S. was funded by ANID, Beca Chile Doctorado en el extranjero, number 72210071.

\end{document}